\begin{document}
\title[\hfilneg
\hfil Multiplicity of positive solutions for a fractional Laplacian ]
{Multiplicity of positive solutions for a fractional Laplacian equations involving critical nonlinearity}

\author[J. Zhang\,\,,X. Liu\,\,, H. Jiao\hfil
\hfilneg]
{Jinguo Zhang,\quad Xiaochun Liu,\quad Hongying Jiao}

\address{Jinguo Zhang\newline
   School of Mathematics \\
  Jiangxi Normal University \\
  330022 Nanchang, China}
\email{yuanxin1027suda@163.com}

\address{Xiaochun Liu \newline
   School of Mathematics and Statistics \\
  Wuhan University \\
  430072 Wuhan, China}
\email{xcliu@whu.edu.cn}

\address{Hongying Jiao \newline
School of Sciences\\
Air Force Engineering University\\
Xi'an 710051, China}
\email{HYJiao12@163.com}

\thanks{Supported by NSFC Grant No.11371282.}
\subjclass[2010]{35J60, 47J30}
 \keywords{Fractional Laplacian equation; Critical Sobolev exponent; Variational methods.}

\begin{abstract}
In this paper we deal with the multiplicity of positive solutions to the fractional Laplacian equation
 \begin{equation*}
 \left\{\aligned
(-\Delta)^{\frac{\alpha}{2}} u&=\lambda f(x)|u|^{q-2}u+|u|^{2^{*}_{\alpha}-2}u, \quad &\text{in}\,\,\Omega,\\
u&=0,&\text{on}\,\,\partial\Omega,\\
 \endaligned\right.
 \end{equation*}
where $\Omega\subset \mathbb{R}^{N}(N\geq 2)$ is a bounded domain with smooth boundary, $0<\alpha<2$,
$(-\Delta)^{\frac{\alpha}{2}}$ stands for the fractional Laplacian operator,
$f\in C(\Omega\times\mathbb{R},\mathbb{R})$ may be sign changing  and $\lambda$ is a positive parameter.
We will prove that there exists $\lambda_{*}>0$ such that
the problem has at least two positive solutions for each $\lambda\in (0\,,\,\lambda_{*})$.
In addition, the concentration behavior of the solutions are investigated.
 \end{abstract}

\maketitle
\numberwithin{equation}{section}
\newtheorem{theorem}{Theorem}[section]
\newtheorem{lemma}{Lemma}[section]
\newtheorem{remark}{Remark}[section]
\newtheorem{proposition}{Proposition}[section]
\newtheorem{corollary}{Corollary}[section]
\newtheorem{definition}{Definition}[section]
\newcommand{\R}{\mathbb{R}^{N}}

\newcommand{\HO}{H^{s}_{0}(\Omega)}
\newcommand{\HHO}{H^{s}_{0,L}(\mathcal{C})}
\newcommand{\HHR}{H^{s}_{0,L}(\Omega)}
\newcommand{\HR}{H^{s}_{0}(\Omega)}
\section{Introduction}
In this paper, we are concerned with the multiplicity of solutions to the
following fractional Laplacian equation:
\begin{equation}\label{eq01}
 \left\{\aligned
(-\Delta)^{\frac{\alpha}{2}} u&=\lambda f(x)|u|^{q-2}u+|u|^{2^{*}_{\alpha}-2}u, \quad &\text{in}\,\,\Omega,\\
u&=0,&\text{on}\,\,\partial\Omega,\\
 \endaligned\right.
 \end{equation}
where $\Omega \subset \mathbb{R}^{N}$, $N\geq 2$, is a bounded domain with
smooth boundary, $0<\alpha<2$,  $(-\Delta)^{\frac{\alpha}{2}}$ stands for
the fractional Laplacian operator, $2^{*}_{\alpha}:=\frac{2N}{N-\alpha}$,
$1<q<2$, $\lambda>0$ and $f:\Omega\to \mathbb{R}$ is a continuous function with
 $f^{+}(x)=\max\{f(x),0\}\neq0$ on $\Omega$. From the assumptions on $f$
 and $q$, we know that the problem \eqref{eq01} involving
 the concave-convex nonlinearities and sign-changing weight function.

The fractional power of the Laplacian is the infinitesimal generators
of L\'{e}vy stable diffusion process and arises in
anomalous diffusions in plasmas, flames propagation and chemical reactions
in liquids, population dynamics, geophysical
fluid dynamics and American options in finance. For more details,
 one can see \cite{da2004,as2005}
and references therein.

Recently the fractional Laplacian attracts much interest in nonlinear analysis,
 such as in \cite{bc2012,bc2013,ll2007,xj2005,xt2010,se2013-1,se2013-2,szy2014,t2011}.
Caffarelli and Silvestre \cite{ll2007} gave a new formulation of the fractional
 Laplacians through Dirichlet-Neumann maps.
This is commonly used in the recent literature since it allows us to write nonlocal
 problems in a local way and this permits to us use the
variational methods for those kinds of problems.
In \cite{xt2010}, Cabr\'{e} and Tan defined the operator of the square root of
Laplacian through the spectral decomposition
of the Laplacian operator on $\Omega$ with zero Dirichlet boundary
conditions. With classical local techniques, they established existence
 of positive solutions for problems with subcritical
 nonlinearities, regularity and $L^{\infty}$-estimate of Brezis-Kato
 type for weak solutions. In \cite{se2013-1,se2013-2,t2011},
 the authors employed the Brezis-Nirenberg technique to build an analogue
 results to the problem in \cite{bn1983}, but with the fractional Laplacian
 instead of the Laplacian.

The analogue problem to problem \eqref{eq01} for the Laplacian operator has
 been investigated widely in the past decades,
 see for example \cite{dh1996,ct1992,wu2008} and the references therein.
 The main purpose of this paper is to generalize the partial results of
 \cite{bc2012} to the problem involving sign-changing weight function.
 Using the variational methods and the Nehari
 manifold decomposition, we first prove that the problem \eqref{eq01} has
 at least two positive solutions for $\lambda$ sufficiently small.

\begin{theorem}\label{th01}
There exists $\lambda_{*}>0$ such that for $\lambda\in (0\,,\,\lambda_{*})$,
 the problem \eqref{eq01} has at least two positive solutions.
\end{theorem}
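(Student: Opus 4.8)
\smallskip
\noindent\emph{Sketch of the intended proof.}
The plan is to localize \eqref{eq01} by the Caffarelli--Silvestre extension and then run a Nehari manifold argument in the spirit of Ambrosetti--Brezis--Cerami and Tarantello. Writing $s=\alpha/2\in(0,1)$, for $u\in\HR$ let $w$ be its $\alpha$-harmonic extension to the half-cylinder $\mathcal C=\Omega\times(0,\infty)$, characterized (up to a normalization $\kappa_\alpha$) by $\operatorname{div}(y^{1-\alpha}\nabla w)=0$ in $\mathcal C$, $w=0$ on $\partial\Omega\times(0,\infty)$, $w(\cdot,0)=u$, and $(-\Delta)^{\alpha/2}u=-\kappa_\alpha\lim_{y\to0^{+}}y^{1-\alpha}\partial_y w$. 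Then \eqref{eq01} is equivalent to a degenerate elliptic boundary value problem on $\mathcal C$ whose weak solutions are the critical points in $\HHO$ of
\[
I_\lambda(w)=\frac{\kappa_\alpha}{2}\int_{\mathcal C}y^{1-\alpha}|\nabla w|^2\,dx\,dy-\frac{\lambda}{q}\int_\Omega f(x)\,|w(x,0)|^{q}\,dx-\frac{1}{2^{*}_\alpha}\int_\Omega|w(x,0)|^{2^{*}_\alpha}\,dx .
\]
To obtain \emph{positive} solutions I would work with the modified functional in which the two nonlinear terms are replaced by their positive-part analogues $(w(x,0)^{+})^{q}$ and $(w(x,0)^{+})^{2^{*}_\alpha}$; testing the Euler equation against the negative part of $w$ forces $w\ge0$, and the strong maximum principle for the weighted operator then gives $w>0$ in $\mathcal C$, i.e.\ $u>0$ in $\Omega$. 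This reduction is suppressed below.

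Since $I_\lambda$ is unbounded below because of the critical term, I would minimize on the Nehari manifold $\mathcal N_\lambda=\{w\in\HHO\setminus\{0\}:\langle I_\lambda'(w),w\rangle=0\}$, split as $\mathcal N_\lambda=\mathcal N_\lambda^{+}\cup\mathcal N_\lambda^{0}\cup\mathcal N_\lambda^{-}$ according to the sign of $\phi_w''(1)$, where $\phi_w(t):=I_\lambda(tw)$. Each fibering map has the shape $\phi_w(t)=\tfrac12 A t^2-\tfrac{\lambda}{q}B t^{q}-\tfrac1{2^{*}_\alpha}C t^{2^{*}_\alpha}$ with $A,C>0$ and $B=\int_\Omega f|w(\cdot,0)|^{q}$ of either sign; a direct discussion of this polynomial shows there is $\lambda_{**}>0$ so that for $\lambda\in(0,\lambda_{**})$ one has $\mathcal N_\lambda^{0}=\emptyset$, whence $\mathcal N_\lambda^{\pm}$ are $C^1$ submanifolds of codimension one and, by the Lagrange-multiplier argument, any minimizer of $I_\lambda$ on $\mathcal N_\lambda^{\pm}$ is a free critical point. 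On $\mathcal N_\lambda^{+}$ the functional is coercive and bounded below with $c^{+}:=\inf_{\mathcal N_\lambda^{+}}I_\lambda<0$; Ekeland's principle produces a minimizing $(PS)_{c^{+}}$ sequence, and since $c^{+}<0$ lies below the compactness threshold $\frac{\alpha}{2N}S^{N/\alpha}$ ($S$ the best fractional Sobolev trace constant), it converges to a first positive solution $w_1$, which is in fact a local minimizer of $I_\lambda$ in $\HHO$.

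The delicate point, and the main obstacle, is the second solution, obtained by minimizing $I_\lambda$ over $\mathcal N_\lambda^{-}$, where the convex nonlinearity is critical and Palais--Smale sequences may lose compactness through concentration. The key input is a \emph{local} $(PS)$ condition: a Brezis--Lieb / concentration--compactness splitting on $\mathcal C$ shows $I_\lambda$ satisfies $(PS)_c$ for every $c<c^{+}+\frac{\alpha}{2N}S^{N/\alpha}$. It then remains to verify $c^{-}:=\inf_{\mathcal N_\lambda^{-}}I_\lambda<c^{+}+\frac{\alpha}{2N}S^{N/\alpha}$. For this I would fix $x_0$ with $f(x_0)>0$, take the extensions $w_\varepsilon$ of the rescaled extremals $u_\varepsilon(x)=\varepsilon^{-(N-\alpha)/2}U\big((x-x_0)/\varepsilon\big)$ truncated near $x_0$, and estimate $\sup_{t\ge0}I_\lambda(w_1+t w_\varepsilon)$ using the sharp expansions
\[
\kappa_\alpha\!\int_{\mathcal C}\!y^{1-\alpha}|\nabla w_\varepsilon|^2=S^{N/\alpha}+O(\varepsilon^{N-\alpha}),\qquad \int_\Omega|w_\varepsilon(x,0)|^{2^{*}_\alpha}=S^{N/\alpha}+O(\varepsilon^{N}),
\]
together with the favourable cross terms with $w_1$ and the strictly negative, lower-order contribution $-\tfrac{\lambda}{q}\int_\Omega f\,w_\varepsilon^{q}$; since $1<q<2<2^{*}_\alpha$ the concave term produces a correction that dominates the $O(\varepsilon^{N-\alpha})$ error, giving the strict inequality for $\varepsilon$ small. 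Granting this gap estimate, the infimum $c^{-}$ is attained at some $w_2\in\mathcal N_\lambda^{-}$, $w_1\ne w_2$ because the two sets are disjoint, and both functions are positive by the maximum principle. Taking $\lambda_*$ to be the minimum of $\lambda_{**}$ and of the parameter bound needed for the gap estimate completes the proof of Theorem~\ref{th01}.
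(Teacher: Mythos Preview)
Your plan is essentially the paper's own proof: Caffarelli--Silvestre extension, Nehari decomposition $\mathcal N_\lambda=\mathcal N_\lambda^{+}\cup\mathcal N_\lambda^{0}\cup\mathcal N_\lambda^{-}$ with $\mathcal N_\lambda^{0}$ trivial for small $\lambda$, a first solution $w_1\in\mathcal N_\lambda^{+}$ via Ekeland, and a second one on $\mathcal N_\lambda^{-}$ using the local $(PS)$ condition below $c^{+}+\tfrac{\alpha}{2N}(\kappa_\alpha S)^{N/\alpha}$ together with a gap estimate along $t\mapsto I_\lambda(w_1+t\eta w_\varepsilon)$.

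One point deserves correction. In the gap estimate the paper does \emph{not} rely on the concave term $-\tfrac{\lambda}{q}\int_\Omega f\,w_\varepsilon^{q}$ to beat the $O(\varepsilon^{N-\alpha})$ error; in the expansion of $I_\lambda(w_1+t\eta w_\varepsilon)$ the concave contribution appears only through the combination $|w_1+t\eta w_\varepsilon|^{q}-|w_1|^{q}-q|w_1|^{q-1}t\eta w_\varepsilon$, which is merely shown to have the favourable sign (concavity of $r\mapsto r^{q}$ on the set where $f>0$) and then discarded. The term that actually forces the strict inequality is the cross term
\[
-t^{2^{*}_\alpha-1}\int_{\Omega}|\eta w_\varepsilon|^{2^{*}_\alpha-1}\,w_1\,dx \;=\;-C\,\varepsilon^{\frac{N-\alpha}{2}}+o\!\big(\varepsilon^{\frac{N-\alpha}{2}}\big),
\]
coming from the expansion of $\int|w_1+t\eta w_\varepsilon|^{2^{*}_\alpha}$; since $\tfrac{N-\alpha}{2}<N-\alpha$, this dominates the error. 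You do mention the ``favourable cross terms with $w_1$'', so the mechanism is present in your sketch, but your final sentence attributes the gain to the wrong term.
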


As for the asymptotic behavior of the solutions obtained in Theorem \ref{th01}
 as $\lambda\to0$, we have the following result.

\begin{theorem}\label{th02}
Assume that a sequence $\{\lambda_{n}\}$ satisfies $\lambda_{n}>0$
and $$\lambda_{n}\to 0\,\,\text{as}\,\,n\to \infty.$$
Then there exists a subsequence $\{\lambda_n\}$ and two
sequence $\{u_{n}^{(j)}(x)\}$ $(\,j=1.2\,)$ of positive solutions of
Eq.\eqref{eq01} such that
\begin{itemize}
\item[$(i)$]$\|u_{n}^{(1)}\|_{H^{\frac{\alpha}{2}}_{0}(\Omega)}\to 0$ as $n\to \infty$;
\item[$(ii)$] There exist two sequence $\{x_{n}\}\subset \Omega$, $\{R_{n}\}\subset \mathbb{R}^{+}$ and a
positive solution $u_{0}\in H^{\frac{\alpha}{2}}_{0}(\Omega)$ of critical problem
$$(-\Delta)^{\frac{\alpha}{2}}u=|u|^{2^{*}_{s}-2}u,\quad \text{in}\,\, \mathbb{R}^{N},$$
such that $$R_{n}\to +\infty\,\,\text{as}\,\,n\to +\infty$$
and
$$\|u_{n}^{(2)}(x)-R_{n}^{\frac{N-\alpha}{2}}u_{0}(R_{n}(x-x_{n}))\|_{H^{\frac{\alpha}{2}}_{0}(\Omega)}\to 0\,\,\text{as}\,\, n\to \infty.$$
\end{itemize}
\end{theorem}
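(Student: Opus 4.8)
The plan is to read off both statements from the variational construction of the two solutions in Theorem~\ref{th01}. I would first recall that $u^{(1)}_{\lambda}$ is a minimiser of the energy functional $I_{\lambda}$ over the component $\mathcal N^{+}_{\lambda}$ of the Nehari manifold, with $I_{\lambda}(u^{(1)}_{\lambda})<0$, while $u^{(2)}_{\lambda}$ minimises $I_{\lambda}$ over $\mathcal N^{-}_{\lambda}$ and satisfies $I_{\lambda}(u^{(2)}_{\lambda})=c^{-}_{\lambda}$ with $0<c^{-}_{\lambda}<c_{\infty}:=\frac{\alpha}{2N}S_{\alpha}^{N/\alpha}$ and $c^{-}_{\lambda}\to c_{\infty}$ as $\lambda\to0^{+}$, where $S_{\alpha}$ is the best constant in the fractional Sobolev inequality and $\|\cdot\|$ denotes the norm of $H^{\frac\alpha2}_{0}(\Omega)$, realised on the cylinder $\mathcal C=\Omega\times(0,\infty)$ as $\|w\|^{2}=\kappa_{\alpha}\int_{\mathcal C}y^{1-\alpha}|\nabla w|^{2}\,dx\,dy$. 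All sequences below are taken along subsequences without relabelling. For $(i)$: on the Nehari manifold $I_{\lambda}(u)=\frac{\alpha}{2N}\|u\|^{2}-\lambda\big(\tfrac1q-\tfrac1{2^{*}_{\alpha}}\big)\int_{\Omega}f|u|^{q}\,dx$, so from $I_{\lambda_{n}}(u^{(1)}_{\lambda_{n}})\le0$ and H\"older's and the Sobolev inequalities one gets $\frac{\alpha}{2N}\|u^{(1)}_{\lambda_{n}}\|^{2}\le C\lambda_{n}\|u^{(1)}_{\lambda_{n}}\|^{q}$; since $q<2$ this gives $\|u^{(1)}_{\lambda_{n}}\|\le(C'\lambda_{n})^{1/(2-q)}\to0$, which is $(i)$.

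For $(ii)$ I would begin by noting that $\{u^{(2)}_{\lambda_{n}}\}$ is bounded (from the energy bound and the Nehari identity), and that its extension $w_{n}$ is a bounded Palais--Smale sequence at level $c_{\infty}$ for the purely critical functional $J(w)=\tfrac12\|w\|^{2}-\tfrac1{2^{*}_{\alpha}}\int_{\Omega}|w(\cdot,0)|^{2^{*}_{\alpha}}$, because $J(w_{n})=c^{-}_{\lambda_{n}}+o(1)\to c_{\infty}$ and, since $I'_{\lambda_{n}}(w_{n})=0$, $\|J'(w_{n})\|_{*}=\|(J-I_{\lambda_{n}})'(w_{n})\|_{*}\le C\lambda_{n}\|w_{n}\|^{q-1}\to0$. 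Passing to a weak limit $w_{n}\rightharpoonup w$, the function $w(\cdot,0)$ solves $(-\Delta)^{\frac\alpha2}u=|u|^{2^{*}_{\alpha}-2}u$ in $\Omega$ with zero boundary value; by the Brezis--Lieb lemma, the compact embedding $H^{\frac\alpha2}_{0}(\Omega)\hookrightarrow L^{q}(\Omega)$ (which kills the $\lambda_{n}$--term) and the Nehari identity one obtains $\|w_{n}-w\|^{2}\to b$, $\int_{\Omega}|(w_{n}-w)(\cdot,0)|^{2^{*}_{\alpha}}\to b$ for some $b\ge0$, and $c_{\infty}=J(w)+\frac{\alpha}{2N}b$. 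If $w\ne0$ then, since $S_{\alpha}$ is not attained on a bounded domain, any nontrivial solution of the $\Omega$--problem has $\|w\|^{2}>S_{\alpha}^{N/\alpha}$, hence $J(w)=\frac{\alpha}{2N}\|w\|^{2}>c_{\infty}$, contradicting $c_{\infty}=J(w)+\frac{\alpha}{2N}b\ge J(w)$. Therefore $w=0$, $b=S_{\alpha}^{N/\alpha}$, and $\|w_{n}\|^{2}\to S_{\alpha}^{N/\alpha}$, $\int_{\Omega}|u^{(2)}_{\lambda_{n}}|^{2^{*}_{\alpha}}\to S_{\alpha}^{N/\alpha}$, while $u^{(2)}_{\lambda_{n}}\rightharpoonup0$ and $u^{(2)}_{\lambda_{n}}\to0$ a.e.

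Next I would locate the concentration. By the concentration--compactness principle, $|u^{(2)}_{\lambda_{n}}(\cdot,0)|^{2^{*}_{\alpha}}\rightharpoonup\sum_{j}\nu_{j}\delta_{x_{j}}$ (an atomic measure, since $u^{(2)}_{\lambda_{n}}\rightharpoonup0$) with $\liminf\|w_{n}\|^{2}\ge S_{\alpha}\sum_{j}\nu_{j}^{2/2^{*}_{\alpha}}$; the chain $S_{\alpha}^{N/\alpha}=\sum_{j}\nu_{j}$ and $S_{\alpha}^{N/\alpha}=\lim\|w_{n}\|^{2}\ge S_{\alpha}\sum_{j}\nu_{j}^{2/2^{*}_{\alpha}}\ge S_{\alpha}\big(\sum_{j}\nu_{j}\big)^{2/2^{*}_{\alpha}}=S_{\alpha}^{N/\alpha}$ (using $2/2^{*}_{\alpha}<1$) forces a single atom $x_{0}\in\overline{\Omega}$ of mass $S_{\alpha}^{N/\alpha}$. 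I would then fix $\delta_{0}\in(0,S_{\alpha}^{N/\alpha})$ below the $\varepsilon$--regularity threshold of the critical equation, pick $x_{n}\in\Omega$ ($x_{n}\to x_{0}$) and $r_{n}\to0$ with $\int_{B_{r_{n}}(x_{n})}|u^{(2)}_{\lambda_{n}}|^{2^{*}_{\alpha}}=\sup_{x}\int_{B_{r_{n}}(x)}|u^{(2)}_{\lambda_{n}}|^{2^{*}_{\alpha}}=\delta_{0}$ (possible by continuity of the concentration function, with $r_{n}\to0$ forced by concentration), set $R_{n}:=r_{n}^{-1}\to\infty$, and rescale $u^{(2)}_{\lambda_{n}}$ (together with its extension) to $v_{n}(x):=R_{n}^{-\frac{N-\alpha}{2}}u^{(2)}_{\lambda_{n}}(x_{n}+R_{n}^{-1}x)$ on $\Omega_{n}:=R_{n}(\Omega-x_{n})$, which preserves the $\dot H^{\frac\alpha2}$-- and $L^{2^{*}_{\alpha}}$--norms and satisfies $\sup_{x}\int_{B_{1}(x)}|v_{n}|^{2^{*}_{\alpha}}=\delta_{0}$. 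The rescaled equation is $(-\Delta)^{\frac\alpha2}v_{n}=|v_{n}|^{2^{*}_{\alpha}-2}v_{n}+g_{n}$ on $\Omega_{n}$ with $g_{n}\to0$ (the lower--order term picks up a positive power of $r_{n}$ and $\lambda_{n}\to0$). The domains $\Omega_{n}$ converge to $\R$ or to a half--space, and the choice of $\delta_{0}$ gives $v_{n}\to v$ in $L^{2^{*}_{\alpha}}_{\mathrm{loc}}$ with $\int_{B_{1}}|v|^{2^{*}_{\alpha}}=\delta_{0}>0$, so $v\ne0$ solves the critical equation on the limit domain. Since that equation with zero exterior data has no positive solution on a half--space, the limit domain is $\R$ (equivalently $R_{n}\operatorname{dist}(x_{n},\partial\Omega)\to\infty$); as $u^{(2)}_{\lambda_{n}}>0$ we have $v\ge0$, hence $v>0$, and by the classification of positive entire solutions of $(-\Delta)^{\frac\alpha2}v=v^{2^{*}_{\alpha}-1}$ in $\R$, $v$ is an Aubin--Talenti instanton $u_{0}$ (its dilation and translation absorbed into $R_{n},x_{n}$), with $\|u_{0}\|^{2}_{\dot H^{\frac\alpha2}}=S_{\alpha}^{N/\alpha}$. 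Finally, $v_{n}\rightharpoonup u_{0}$ and $\|v_{n}\|^{2}=\|u^{(2)}_{\lambda_{n}}\|^{2}\to S_{\alpha}^{N/\alpha}=\|u_{0}\|^{2}$ yield $v_{n}\to u_{0}$ strongly in $\dot H^{\frac\alpha2}$; undoing the scaling and discarding the $o(1)$ mass of the bubble outside $\Omega$ gives $\|u^{(2)}_{\lambda_{n}}-R_{n}^{\frac{N-\alpha}{2}}u_{0}(R_{n}(\cdot-x_{n}))\|_{H^{\frac\alpha2}_{0}(\Omega)}\to0$ (understood with zero extension outside $\Omega$), which is $(ii)$.

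The hard part will be the blow--up step: arranging the scales $R_{n}$ and centres $x_{n}$ so that the rescaled sequence has a nonzero limit, ruling out boundary concentration by a Pohozaev--type nonexistence result for the critical fractional equation on a half--space, and invoking the Liouville--type classification on $\R$ to identify the profile $u_{0}$. The remaining ingredients — the boundedness, the reduction to a Palais--Smale sequence for the purely critical functional, the single--bubble concentration, and the upgrade from weak to strong convergence once the norms match — are by now standard.
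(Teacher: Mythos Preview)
Your treatment of part~$(i)$ matches the paper's (Remark~\ref{r2} rewritten): both extract $\|u^{(1)}_{\lambda}\|^{2-q}\le C\lambda$ from the Nehari identity together with the negativity of the energy on $\mathcal N^{+}$.

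For part~$(ii)$ the two routes diverge. The paper first projects $w^{(2)}_{n}$ onto the pure critical Nehari set $\mathcal O_{1}=\{w\neq0:\|w\|^{2}=\int_{\Omega\times\{0\}}|w|^{2^{*}_{\alpha}}\}$ by a scalar $t(w^{(2)}_{n})$, shows via Lemma~\ref{le9} that $t(w^{(2)}_{n})\to1$ as $\lambda_{n}\to0$, and then uses a Young--inequality comparison between $J$ and $J_{\mu}$ to conclude $J_{1}(w^{(2)}_{n})\to\frac{\alpha}{2N}(\kappa_{\alpha}S(\alpha,N))^{N/\alpha}$; at that point it simply invokes the profile decomposition of Palatucci--Pisante \cite{ga2013} as a black box to produce $x_{n}$, $R_{n}$ and $u_{0}$ in one stroke. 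You instead bound $J_{1}-I_{\lambda_{n}}$ directly to reach the same Palais--Smale conclusion, and then run the blow--up argument by hand: Lions' concentration--compactness to isolate a single atom, rescaling to unit scale, a half--space nonexistence result to exclude boundary concentration, and the classification of positive entire solutions to identify $u_{0}$. Your route is more self--contained and explains the mechanism; the paper's is much shorter but outsources the entire concentration step to the cited theorem, which in particular absorbs the half--space and Liouville issues you single out as ``the hard part''.

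One point of order: you announce $c^{-}_{\lambda}\to c_{\infty}$ as something to \emph{recall} from Theorem~\ref{th01}, but only the upper bound $c^{-}_{\lambda}<m_{J}+c_{\infty}$ with $m_{J}\to0^{-}$ is available from that construction. Your own ingredients do yield the full limit, but the logic must run in the right order to avoid circularity: first use $\limsup c^{-}_{\lambda_{n}}\le c_{\infty}$ to force the weak limit $w=0$ (via your non--attainment argument), then obtain $\liminf\ge c_{\infty}$ from the Sobolev inequality applied to the residual and the lower bound on $\|w\|$ coming from $w\in\mathcal N^{-}$.
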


The paper is organized as follows. In Section 2, we introduce a variational
setting of the problem and present some preliminary results.
In Section 3, some properties of the fractional operator are discussed. Then we give the
 proof of Theorem \ref{th01}. Finally,  the proof of Theorem \ref{th02}
is given in Section 4.

For convenience we fix some notations. $L^{p}(\Omega)$ $(1<p\leq \infty)$
denotes the usual Sobolev space with norm $|\cdot|_{L^p}$;
$C_{0}(\bar{\Omega})$ denotes the space of continuous real functions in
 $\bar{\Omega}$ vanishing on the boundary $\partial\Omega$;
 $C$ or $C_{i}(i=1,2,\cdot\cdot\cdot,)$ denote any positive constant.

\section{Notation and Preliminaries}
Denote the upper half-space in $\mathbb{R}^{N+1}_{+}$ by
$$\mathbb{R}^{N+1}_{+}=\{z=(x,y)=(x_{1},x_{2},\cdot\cdot\cdot,x_{n},y)\in \mathbb{R}^{N+1}|\,y>0\},$$
the half cylinder standing on a bounded smooth domain
$\Omega\subset \mathbb{R}^{N}$ by
$\mathcal{C}_{\Omega}=\Omega\times(0\,,\,\infty)\subset \mathbb{R}^{N+1}_{+}$
 and its lateral boundary given that
 $\partial_{L}\mathcal{C}_{\Omega}=\partial\Omega\times[0\,,\,\infty)$.

 Let $\{\varphi_{j}\}$ be an orthonormal basis of $L^{2}(\Omega)$ with $|\varphi_{j}|_{L^2}=1$
 forming a spectral decomposition of $-\Delta$ in $\Omega$ with zero
Dirichlet boundary conditions and $\lambda_{j}$ be the corresponding eigenvalues.
Let
$$H^{\frac{\alpha}{2}}_{0}(\Omega)=\{u=\sum\limits_{j=1}^{\infty} a_{j}\varphi_{j}\in L^{2}(\Omega):\|u\|_{H^{\frac{\alpha}{2}}_{0}(\Omega)}=\Big(\sum\limits_{j=1}^{\infty} a_{j}^{2}\lambda^{\frac{\alpha}{2}}\Big)^{\frac{1}{2}}<\infty\}.$$
Define the inner product in $H^{\frac{\alpha}{2}}_{0}(\Omega)$ by
$$\langle u\,,\,v\rangle_{H^{\frac{\alpha}{2}}_{0}(\Omega)}
=\int\limits_{\Omega}(-\Delta)^{\frac{\alpha}{4}}u(-\Delta)^{\frac{\alpha}{4}}v\,dx.$$
It is not difficult to see that $H^{\frac{\alpha}{2}}_{0}(\Omega)$ is a Hilbert space.
For any $u\in H^{\frac{\alpha}{2}}_{0}(\Omega)$, $u=\sum\limits_{j=1}^{\infty}a_{j}\varphi_{j}$
with $a_{j}=\int_{\Omega}u\,\varphi_{j}dx$, the fractional power of the Dirichlet Laplacian
$(-\Delta)^{\frac{\alpha}{2}}$ is defined by
$$(-\Delta)^{\frac{\alpha}{2}}u=
\sum\limits_{j=1}^{\infty}a_{j}\,\lambda_{j}^{\frac{\alpha}{2}}\varphi_{j}.$$
\begin{definition}\label{def01}
We say that $u\in H^{\frac{\alpha}{2}}_{0}(\Omega)$ is a
solution of Eq. \eqref{eq01} such that for every function
$\varphi\in H^{\frac{\alpha}{2}}_{0}(\Omega)$, it holds
$$\int\limits_{\Omega}(-\Delta)^{\frac{\alpha}{4}}u(-\Delta)^{\frac{\alpha}{4}}\varphi dx=\lambda\int\limits_{\Omega}f(x)|u|^{q-2}u\varphi dx
+\int\limits_{\Omega}|u|^{2^{*}_{\alpha}-2}u\varphi dx.$$
\end{definition}

 Associated with problem \eqref{eq01} we consider the energy functional
 $$I(u)=\frac{1}{2}\int\limits_{\Omega}|(-\Delta)^{\frac{\alpha}{4}}u|^{2}dx
 -\frac{\lambda}{q}\int\limits_{\Omega}f(x)|u|^{q}dx
 -\frac{1}{2^{*}_{\alpha}}\int\limits_{\Omega}|u|^{2^{*}_{\alpha}}dx.$$
This functional is well defined in $H^{\frac{\alpha}{2}}_{0}(\Omega)$,
and moreover, the critical points of $I$ correspond to weak solutions of
problem \eqref{eq01}.

We now conclude the main ingredients of a recently developed technique
which can deal with fractional power of the Laplacian.
To treat the nonlocal problem \eqref{eq01}, we will study a
corresponding extension problem, so that we can investigate
problem \eqref{eq01} by studying a local problem via classical
nonlinear variational methods.

We first define the extension operator and fractional Laplacian for
functions in $H^{\frac{\alpha}{2}}_{0}(\Omega)$.

\begin{definition}
Given a function $u\in H^{\frac{\alpha}{2}}_{0}(\Omega)$,
we define its $\alpha$-harmonic extension $w=E_{\alpha}(u)$
to the cylinder $\mathcal{C}_{\Omega}$ as a solution of the problem
\begin{equation*}\left\{
\aligned
&div(y^{1-\alpha}\nabla w)=0,\quad &\text{in}\,\,\mathcal{C}_{\Omega},\\
&w=0,&\text{on}\,\,\partial_{L}\mathcal{C}_{\Omega},\\
&w=u,&\text{on}\,\,\Omega\times\{0\}.
\endaligned\right.
\end{equation*}
\end{definition}
Following \cite{ll2007}, we can define the fractional Laplacian
operator by the Dirichlet to Neumann map as follows.
\begin{definition}
For any regular function $u(x)$, the fractional Laplacian
$(-\Delta)^{\frac{\alpha}{2}}$ acting on $u$ is defined by
$$(-\Delta)^{\frac{\alpha}{2}}u(x)=-\kappa_{\alpha}\lim\limits_{y\to 0^{+}}y^{1-\alpha}\frac{\partial w}{\partial y}(x,y),\quad \forall x\in \Omega,\quad y\in (0\,,\,\infty),$$
where $w=E_{\alpha}(u)$ and $\kappa_{\alpha}$ is a normalization constant.
\end{definition}

Define $H^{\frac{\alpha}{2}}_{0,L}(\mathcal{C}_{\Omega})$
as the closure of $C^{\infty}_{0}(\Omega)$ under the norm
$$\|w\|_{H^{\frac{\alpha}{2}}_{0,L}(\mathcal{C}_{\Omega})}
=\Big(\kappa_{\alpha}\int_{\mathcal{C}_{\Omega}}y^{1-\alpha}|\nabla w|^{2}dxdy\Big)^{\frac{1}{2}}.$$

From \cite{bc2013} and \cite{ll2007}, the map $E_{\alpha}(\cdot)$ is an
isometry between  $H^{\frac{\alpha}{2}}_{0}(\Omega)$ and
$H^{\frac{\alpha}{2}}_{0,L}(\mathcal{C}_{\Omega})$.
Furthermore, we have
\begin{itemize}
\item[(i)]$\|(-\Delta)^{\frac{\alpha}{2}}u\|_{H^{-\frac{\alpha}{2}}(\Omega)}=
\|u\|_{H^{\frac{\alpha}{2}}_{0}(\Omega)}
=\|E_{\alpha}(u)\|_{H^{\frac{\alpha}{2}}_{0,L}(\mathcal{C}_{\Omega})}$,
where $H^{-\frac{\alpha}{2}}(\Omega)$ denotes the dual
space of $H^{\frac{\alpha}{2}}_{0}(\Omega)$;
\item[(ii)]For any $w\in H^{\frac{\alpha}{2}}_{0,L}(\mathcal{C}_{\Omega})$,
there exists a constant $C$ independent of $w$ such that
$$\|\text{tr}_{\Omega}w\|_{L^{r}(\Omega)}\leq C\|w\|_{H^{\frac{\alpha}{2}}_{0,L}(\mathcal{C}_{\Omega})}$$
holds for every $r\in [2\,,\,\frac{2N}{N-\alpha}]$.
Moreover, $H^{\frac{\alpha}{2}}_{0,L}(\mathcal{C}_{\Omega})$
is compactly embedded into
$L^{r}(\Omega)$ for $r\in [2\,,\,\frac{2N}{N-\alpha})$.
\end{itemize}

Now we can transform the nonlocal problem \eqref{eq01} into
the following local problem:
\begin{equation}\label{eq02}
\left\{\aligned
&-div(y^{1-\alpha}\nabla w)=0,\quad &\text{in}\,\,\mathcal{C}_{\Omega},\\
&w=0,       &\text{on}\,\,\partial_{L}\mathcal{C}_{\Omega},\\
&\frac{\partial w}{\partial \upsilon^{\alpha}}=\lambda f(x)|w|^{q-2}w+|w|^{2^*_{\alpha}-2}w    &\text{on}\,\,\Omega\times\{0\},\\
\endaligned\right.
\end{equation}
where $\frac{\partial w}{\partial\upsilon^{\alpha}}:=-\kappa_{\alpha}\lim\limits_{y\to 0^{+}}y^{1-\alpha}
\frac{\partial w}{\partial y}(x,y)$, $\forall x\in \Omega$.
 An energy solution to this problem is a function
 $w\in H^{\frac{\alpha}{2}}_{0,L}(\mathcal{C}_{\Omega})$
 such that
 $$\kappa_{\alpha}\int\limits_{\mathcal{C}_{\Omega}}y^{1-\alpha}\langle \nabla w\,,\,\nabla \varphi\rangle dxdy
 =\lambda\int\limits_{\Omega\times\{0\}}f(x)|w|^{q-2}w \,\varphi dx
 +\int\limits_{\Omega\times\{0\}}|w|^{2^{*}_{\alpha}-2}w\,\varphi dx$$
for all $\varphi\in H^{\frac{\alpha}{2}}_{0,L}(\mathcal{C}_{\Omega})$.

If $w$ satisfies \eqref{eq02}, then the trace
$u=\text{tr}_{\Omega}w=w(x,0)\in H^{\frac{\alpha}{2}}_{0}(\Omega)$
is an energy solution to problem \eqref{eq01}. The converse is also true.
By the equivalence of these two formulations, we will use both formulations
in the sequel to their best advantage.

The associated energy functional to problem \eqref{eq02} is
\begin{equation*}\label{eq03}
J(w)=\frac{\kappa_{\alpha}}{2}\int\limits_{\mathcal{C}_{\Omega}}y^{1-\alpha}|\nabla w|^{2}dxdy
-\frac{\lambda}{q}\int\limits_{\Omega\times\{0\}}f(x)|w|^{q}dx
-\frac{1}{2^{*}_{\alpha}}\int\limits_{\Omega\times\{0\}}|w|^{2^*_{\alpha}}dx,
\end{equation*}
for all $w\in H^{\frac{\alpha}{2}}_{0,L}(\mathcal{C}_{\Omega})$.
Clearly, the critical points of $J$ in $H^{\frac{\alpha}{2}}_{0,L}(\mathcal{C}_{\Omega})$
correspond to critical points of $I$ in $H^{\frac{\alpha}{2}}_{0}(\Omega)$.

In the following lemma, we will list some inequalities.
\begin{lemma}\label{le}
For every $1\leq r\leq \frac{2N}{N-\alpha}$, and every
$w\in H^{\frac{\alpha}{2}}_{0,L}(\mathcal{C}_{\Omega})$, it holds
$$\Big(\int\limits_{\Omega\times\{0\}}|w|^{r}dx\Big)^{\frac{2}{r}}
\leq C\,\kappa_{\alpha}\int\limits_{\mathcal{C}_{\Omega}}y^{1-\alpha}|\nabla w|^{2}dxdy,$$
where the constant $C$ depends on $r,\,\alpha,\,N,\,|\Omega|$.
\end{lemma}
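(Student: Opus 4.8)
The plan is to prove the trace inequality of Lemma \ref{le} by combining the isometry between $H^{\frac{\alpha}{2}}_{0}(\Omega)$ and $H^{\frac{\alpha}{2}}_{0,L}(\mathcal{C}_{\Omega})$ with the standard Sobolev embedding on the bounded domain $\Omega$. First I would observe that the right-hand side is exactly $C\,\|w\|^2_{H^{\frac{\alpha}{2}}_{0,L}(\mathcal{C}_{\Omega})}$ by definition of the norm, and that for $w\in H^{\frac{\alpha}{2}}_{0,L}(\mathcal{C}_{\Omega})$ the trace $u=\mathrm{tr}_{\Omega}w=w(\cdot,0)$ lies in $H^{\frac{\alpha}{2}}_{0}(\Omega)$ with $\|u\|_{H^{\frac{\alpha}{2}}_{0}(\Omega)}=\|w\|_{H^{\frac{\alpha}{2}}_{0,L}(\mathcal{C}_{\Omega})}$, by property (i) recorded just after the definition of $E_{\alpha}$. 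Thus the claim reduces to the fractional Sobolev inequality on $\Omega$, namely $|u|_{L^{r}(\Omega)}\le C\,\|u\|_{H^{\frac{\alpha}{2}}_{0}(\Omega)}$ for all $1\le r\le \frac{2N}{N-\alpha}$.

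To establish that inequality I would first treat the critical exponent $r=2^{*}_{\alpha}=\frac{2N}{N-\alpha}$. Here one uses that $H^{\frac{\alpha}{2}}_{0}(\Omega)$ embeds continuously into $H^{\frac{\alpha}{2}}(\mathbb{R}^{N})$ (extending by zero, or equivalently working with the spectral definition and comparing with the full-space Gagliardo seminorm), and then invokes the classical fractional Sobolev inequality $|v|_{L^{2^{*}_{\alpha}}(\mathbb{R}^{N})}\le C(N,\alpha)\,[v]_{H^{\frac{\alpha}{2}}(\mathbb{R}^{N})}$; alternatively one may cite property (ii) above directly, which already gives the estimate at $r=2^{*}_{\alpha}$ for the trace. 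Once the endpoint case is in hand, the whole range $1\le r\le 2^{*}_{\alpha}$ follows from the boundedness of $\Omega$: for $r<2^{*}_{\alpha}$, Hölder's inequality gives $|u|_{L^{r}(\Omega)}\le |\Omega|^{\frac{1}{r}-\frac{1}{2^{*}_{\alpha}}}\,|u|_{L^{2^{*}_{\alpha}}(\Omega)}$, so the constant $C$ indeed depends only on $r,\alpha,N$ and $|\Omega|$, as asserted. (For $r=2$ one may instead just use the Poincaré-type inequality coming from the spectral gap $\lambda_1>0$, i.e.\ $|u|_{L^2}^2=\sum a_j^2\le \lambda_1^{-\alpha/2}\sum a_j^2\lambda_j^{\alpha/2}=\lambda_1^{-\alpha/2}\|u\|^2_{H^{\frac{\alpha}{2}}_{0}(\Omega)}$.)

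Assembling the pieces: given $w\in H^{\frac{\alpha}{2}}_{0,L}(\mathcal{C}_{\Omega})$, write $u=\mathrm{tr}_{\Omega}w$, apply the fractional Sobolev/Hölder bound to get $(\int_{\Omega\times\{0\}}|w|^{r})^{2/r}=|u|_{L^r(\Omega)}^2\le C\,\|u\|^2_{H^{\frac{\alpha}{2}}_{0}(\Omega)}$, and finally use the isometry $\|u\|^2_{H^{\frac{\alpha}{2}}_{0}(\Omega)}=\|w\|^2_{H^{\frac{\alpha}{2}}_{0,L}(\mathcal{C}_{\Omega})}=\kappa_{\alpha}\int_{\mathcal{C}_{\Omega}}y^{1-\alpha}|\nabla w|^2\,dxdy$ to conclude. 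The only genuinely non-elementary ingredient is the sharp fractional Sobolev embedding at the critical exponent (equivalently, property (ii)); everything else is Hölder on a set of finite measure and the already-quoted isometry, so I expect no real obstacle beyond correctly citing that embedding and tracking that the dependence of $C$ is as claimed.
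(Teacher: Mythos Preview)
Your proposal is correct. The paper does not supply a proof of Lemma~\ref{le}; it is stated as a known inequality, and in fact it is essentially a restatement (with the range extended down to $r\ge 1$) of property~(ii) quoted just before the lemma from \cite{bc2013,ll2007}. Your argument---reduce to the trace $u=\mathrm{tr}_{\Omega}w$ via the isometry in property~(i), invoke the fractional Sobolev embedding at the critical exponent (equivalently property~(ii)), and then use H\"older on the bounded domain $\Omega$ to cover all $1\le r\le 2^{*}_{\alpha}$---is exactly the natural justification the paper is implicitly relying on, and it correctly tracks the dependence of $C$ on $r,\alpha,N,|\Omega|$.
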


\begin{lemma}\label{le1}
For every $w\in H^{\frac{\alpha}{2}}(\mathbb{R}^{N+1}_{+})$, it holds
\begin{equation}\label{eq04}
S(\alpha,N)\Big(\int\limits_{\mathbb{R}^{N}}|u|^{\frac{2N}{N-\alpha}}\Big)^{\frac{N-\alpha}{N}}\leq \int\limits_{\mathbb{R}^{N+1}_{+}}y^{1-\alpha}|\nabla w|^{2}dxdy,
\end{equation}
where $u=\text{tr}_{\Omega}\,w$. The best constant takes the exact value
$$S(\alpha,N)=\frac{2\,\pi^{\frac{\alpha}{2}}\Gamma(\frac{2-\alpha}{2})\Gamma(\frac{N+\alpha}{2})(\Gamma(\frac{N}{2}))^{\frac{\alpha}{N}}       }{\Gamma(\frac{\alpha}{2})\Gamma(\frac{N-\alpha}{2})(\Gamma(N))^{\frac{\alpha}{N}}}$$
and can be achieved when $u(x)=w(x,0)$ takes the form
\begin{equation}\label{eq05}
u_{\varepsilon}(x)=\frac{\varepsilon^{\frac{N-\alpha}{2}}}{(\varepsilon^{2}+|x|^{2})^{\frac{N-\alpha}{2}}}
\end{equation}
for $\varepsilon>0$ arbitrary and $w=E_{\alpha}(u)$.
\end{lemma}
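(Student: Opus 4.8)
The plan is to reduce the statement to the classical fractional Sobolev inequality on $\mathbb{R}^N$ and then transport the sharp constant and extremals through the $\alpha$-harmonic extension. First I would recall that for $w\in H^{\frac{\alpha}{2}}(\mathbb{R}^{N+1}_+)$ the trace $u=\mathrm{tr}\,w$ belongs to the homogeneous fractional Sobolev space $\dot H^{\alpha/2}(\mathbb{R}^N)$, and by the Caffarelli--Silvestre extension characterization (the same one used in \cite{ll2007}) one has
\[
[u]_{\dot H^{\alpha/2}(\mathbb{R}^N)}^2 \;=\; \kappa_\alpha^{-1}\,\inf_{\substack{v\in H^{\alpha/2}(\mathbb{R}^{N+1}_+)\\ \mathrm{tr}\,v=u}}\int_{\mathbb{R}^{N+1}_+} y^{1-\alpha}|\nabla v|^2\,dx\,dy,
\]
the infimum being attained precisely by $v=E_\alpha(u)$. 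Combining this with the classical sharp fractional Sobolev embedding
\[
S_{\alpha,N}^{0}\,\Big(\int_{\mathbb{R}^N}|u|^{\frac{2N}{N-\alpha}}\Big)^{\frac{N-\alpha}{N}}\;\le\;[u]_{\dot H^{\alpha/2}(\mathbb{R}^N)}^2
\]
immediately yields \eqref{eq04} with $S(\alpha,N)=\kappa_\alpha\,S_{\alpha,N}^0$; since the extension attains the infimum, taking $w=E_\alpha(u)$ shows the constant is sharp and cannot be improved.

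For the explicit value of $S(\alpha,N)$ and the identification of the extremals, I would invoke the sharp fractional Hardy--Littlewood--Sobolev / Sobolev inequality: it is known (Cotsiolis--Tavoularis, Lieb) that equality in the fractional Sobolev inequality on $\mathbb{R}^N$ holds exactly for the family obtained from
\[
U(x)=\big(1+|x|^2\big)^{-\frac{N-\alpha}{2}}
\]
by translations and dilations, i.e. for $u_\varepsilon$ as in \eqref{eq05} up to scaling and translation, and the corresponding optimal constant has the Gamma-function expression stated in the lemma. Plugging $u_\varepsilon$ into both sides and using the Beta-integral $\int_{\mathbb{R}^N}(1+|x|^2)^{-N}\,dx$ together with the known value $[U]_{\dot H^{\alpha/2}}^2$ pins down the constant; the normalization constant $\kappa_\alpha$ is exactly the one chosen so that the extension identity above holds with no extra factor, which is why it does not appear separately in the final formula.

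The only real subtlety — and the step I expect to be the main obstacle — is justifying the extension/trace identity and the attainment of the infimum by $E_\alpha(u)$ with full rigor: one must check that $E_\alpha(u)$ indeed lies in the weighted space $H^{\alpha/2}(\mathbb{R}^{N+1}_+)$, that the weight $y^{1-\alpha}$ is in the Muckenhoupt class $A_2$ (so that the trace theorem and the Poincaré-type estimates apply), and that the minimization over all extensions is actually solved by the divergence-form harmonic one via a straightforward energy comparison $\int y^{1-\alpha}|\nabla v|^2 \ge \int y^{1-\alpha}|\nabla E_\alpha(u)|^2$ obtained by testing the equation $\mathrm{div}(y^{1-\alpha}\nabla E_\alpha(u))=0$ against $v-E_\alpha(u)$. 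Once these facts are in place — all available in \cite{ll2007,bc2013} — the inequality, the sharp constant, and the extremal family follow by combining the cited classical results, so no new computation beyond the Gamma-function bookkeeping is needed.
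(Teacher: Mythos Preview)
The paper does not actually prove Lemma~\ref{le1}: it is stated without proof as a known result, implicitly imported from the literature on the Caffarelli--Silvestre extension and the sharp fractional Sobolev inequality (references \cite{ll2007,bc2013,bc2012} in the paper). Your outline is a correct and standard way to establish the inequality --- reduce to the sharp fractional Sobolev inequality on $\mathbb{R}^N$ via the energy-minimizing property of the $\alpha$-harmonic extension, then read off the constant and extremals from Lieb / Cotsiolis--Tavoularis --- and this is precisely the chain of results the paper is tacitly citing. So there is nothing to compare: you have supplied a proof sketch where the paper gives none, and your sketch matches the route taken in the references the paper relies on.
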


Now we are looking for the solutions of problem \eqref{eq01}.
Equivalently, we consider the solutions of problem \eqref{eq02}.
First we consider the Nehari minimization problem, i.e., for $\lambda>0$,
$$m_{J}=\inf\{J(w)|\,w\in \mathcal{N}\},$$
where $$\mathcal{N}=\{w\in H^{\frac{\alpha}{2}}_{0,L}(\mathcal{C}_{\Omega})\,|\,\,\langle J'(w)\,,\,w\rangle=0\}.$$

 Define
 $$\Psi(w)=\langle J'(w)\,,\,w\rangle=\kappa_{\alpha}\int\limits_{\mathcal{C}_{\Omega}}y^{1-\alpha}|\nabla w|^{2}dxdy-\lambda\, \int\limits_{\Omega\times\{0\}} f(x)|w|^{q}dx-\int\limits_{\Omega\times\{0\}} |w|^{2^*_{\alpha}}dx.$$
 Then, for $w\in \mathcal{N}$,
 $$\langle\Psi'(w)\,,\,w\rangle=2\kappa_{\alpha}\int\limits_{\mathcal{C}_{\Omega}}y^{1-\alpha}|\nabla w|^{2}dxdy-\lambda\, q \int\limits_{\Omega\times\{0\}} f(x)|w|^{q}dx-2^*_{\alpha}\int\limits_{\Omega\times\{0\}} |w|^{2^*_{\alpha}}dx.$$
 Similar to the method used in \cite{wu2008,jx2013},
  we split $\mathcal{N}$ into three parts:
\begin{equation*}
\aligned
&\mathcal{N}^{+}=\{w\in \mathcal{N}\,|\,\langle\Psi'(w)\,,\,w\rangle>0\};\\
 &\mathcal{N}^{0}=\{w\in \mathcal{N}\,|\,\langle\Psi'(w)\,,\,w\rangle=0\};\\
&\mathcal{N}^{-}=\{w\in\mathcal{N}\,|\,\langle\Psi'(w)\,,\,w\rangle<0\}.
\endaligned
\end{equation*}
Then we have the following results.
\begin{lemma}\label{le0}
Let $\lambda_{1}=\Big(\frac{2^{*}_{\alpha}-2}{2^{*}_{\alpha}-q}\Big)
\Big(\frac{2-q}{2^{*}_{\alpha}-q}\Big)^{\frac{2-q}{2^{*}_{\alpha}-q}}
\Big(\kappa_{\alpha}S(\alpha,N)\Big)^{\frac{2^{*}_{\alpha}-q}{2^{*}_{\alpha}-2}}\,|f|_{\infty}^{-1}$.
Then for every $w\in H^{\frac{\alpha}{2}}_{0,L}(\mathcal{C}_{\Omega})$,
$w\neq 0$ and $\lambda\in(0\,,\,\lambda_{1}) $,
there exist unique $t^{+}(w)$ and $t^{-}(w)$ such that
\begin{itemize}
\item[$(1)$]$0\leq t^{+}(w)<t_{max}=
    \Big(\frac{(2-q)\kappa_{\alpha}\int\limits_{\mathcal{C}_{\Omega}}y^{1-\alpha}|\nabla w|^{2}dxdy}
    {(2^{*}_{\alpha}-q)\int\limits_{\Omega\times\{0\}}|w|^{2^{*}_{\alpha}}dx}\Big)^{\frac{1}{2^{*}_{\alpha}-2}}
    <t^{-}(w)$;
\item[$(2)$]$t^{-}(w)w\in \mathcal{N}^{-}$ and $t^{+}(w)w\in \mathcal{N}^{+}$;
\item[$(3)$] $\mathcal{N}^{-}=\Big{\{}w\in H^{\frac{\alpha}{2}}_{0,L}(\mathcal{C}_{\Omega})\setminus\{0\}:\,
t^{-}(\frac{w}{\|w\|_{H^{\frac{\alpha}{2}}_{0,L}(\mathcal{C}_{\Omega})}})
=\|w\|_{H^{\frac{\alpha}{2}}_{0,L}(\mathcal{C}_{\Omega})}
\Big{\}}$;
\item[$(4)$] $J(t^{-}w)=\max\limits_{t\geq t_{max}}J(tw)$
and $J(t^{+}w)=\min\limits_{t\in [0, t^{-}]}J(tw)$.
\end{itemize}
Moreover, $t^{+}(w)>0$ if and only if $\int\limits_{\Omega\times\{0\}}f(x)|w|^{q}dx>0$.
\end{lemma}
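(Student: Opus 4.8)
The plan is to analyze the fibering map $t \mapsto J(tw)$ for fixed $w \neq 0$ and $t > 0$, which is the standard approach for Nehari-manifold decompositions with concave-convex nonlinearities. Write
$$J(tw) = \frac{t^2}{2}\,A(w) - \frac{\lambda t^q}{q}\,B(w) - \frac{t^{2^*_\alpha}}{2^*_\alpha}\,C(w),$$
where $A(w) = \kappa_\alpha\int_{\mathcal{C}_\Omega}y^{1-\alpha}|\nabla w|^2\,dxdy$, $B(w) = \int_{\Omega\times\{0\}}f(x)|w|^q\,dx$, and $C(w) = \int_{\Omega\times\{0\}}|w|^{2^*_\alpha}\,dx > 0$. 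Note that $t w \in \mathcal{N}$ exactly when $\frac{d}{dt}J(tw) = 0$, i.e. $\varphi_w(t) := t^{2-q}A(w) - t^{2^*_\alpha - q}C(w) = \lambda B(w)$; so the number of $t > 0$ with $tw \in \mathcal N$ equals the number of solutions of this scalar equation, and the sign of $\langle\Psi'(tw), tw\rangle$ is governed by $\varphi_w'(t)$.

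The key computational step is to study $\varphi_w(t) = t^{2-q}A(w) - t^{2^*_\alpha - q}C(w)$ on $(0,\infty)$. Since $1 < q < 2 < 2^*_\alpha$, we have $\varphi_w(0^+) = 0$, $\varphi_w(t) \to -\infty$ as $t \to \infty$, and $\varphi_w$ has a unique critical point (a maximum) at
$$t_{\max} = \left(\frac{(2-q)A(w)}{(2^*_\alpha - q)C(w)}\right)^{\frac{1}{2^*_\alpha - 2}},$$
which is exactly the value displayed in statement (1); here $\varphi_w$ is strictly increasing on $(0, t_{\max})$ and strictly decreasing on $(t_{\max}, \infty)$. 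The decisive estimate is to bound $\varphi_w(t_{\max})$ from below: a direct computation gives
$$\varphi_w(t_{\max}) = A(w)^{\frac{2^*_\alpha - q}{2^*_\alpha - 2}}\,C(w)^{-\frac{2-q}{2^*_\alpha - 2}}\left[\left(\tfrac{2-q}{2^*_\alpha-q}\right)^{\frac{2-q}{2^*_\alpha-2}} - \left(\tfrac{2-q}{2^*_\alpha-q}\right)^{\frac{2^*_\alpha-q}{2^*_\alpha-2}}\right],$$
and one simplifies the bracket to $\left(\tfrac{2-q}{2^*_\alpha-q}\right)^{\frac{2-q}{2^*_\alpha-2}}\cdot\frac{2^*_\alpha - 2}{2^*_\alpha - q}$. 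Using the Sobolev inequality of Lemma~\ref{le1} (equivalently the embedding constant $S(\alpha,N)$) to get $C(w) \le (\kappa_\alpha S(\alpha,N))^{-2^*_\alpha/2}A(w)^{2^*_\alpha/2}$, together with $B(w) \le |f|_\infty C(w)^{q/2^*_\alpha}\cdot(\text{const})$ — more precisely $B(w) \le |f|_\infty\,\|w\|_{L^q}^q$ controlled again by $A(w)^{q/2}$ via Lemma~\ref{le} — one checks that $\lambda < \lambda_1$ forces $\lambda B(w) < \varphi_w(t_{\max})$ for all $w \neq 0$. (The precise constant $\lambda_1$ in the statement is designed so this inequality holds.)

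Once $\lambda B(w) < \varphi_w(t_{\max})$ is established, the conclusions follow from elementary calculus on $\varphi_w$. If $B(w) > 0$, the horizontal line at height $\lambda B(w) > 0$ meets the graph of $\varphi_w$ in exactly two points $t^+(w) < t_{\max} < t^-(w)$, giving the two elements of $\mathcal N$; at $t^+$ the function $\varphi_w$ is increasing so $\varphi_w'(t^+) > 0$, which translates (after multiplying by the appropriate positive power of $t^+$) into $\langle\Psi'(t^+w), t^+w\rangle > 0$, i.e. $t^+w \in \mathcal N^+$; similarly $\varphi_w'(t^-) < 0$ yields $t^-w \in \mathcal N^-$. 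If $B(w) \le 0$, only the positive branch where $\varphi_w(t) = \lambda B(w) \le 0$ on $(t_{\max},\infty)$ contributes, producing a single $t^-(w) \ge t_{\max}$ with $t^-w \in \mathcal N^-$, and no $t^+(w) > 0$; this gives the final "if and only if" claim. Item (4) follows because $J(tw)$ has the same critical points as $\varphi_w$ with the same nature: $\frac{d}{dt}J(tw) = t^{q-1}(\lambda B(w) - \varphi_w(t))$ wait — more directly, $\frac{d}{dt}J(tw) = t^{q-1}\big(\varphi_w(t) - \lambda B(w)\big)$ has the sign pattern that makes $t^+$ a local minimum and $t^-$ the global maximum on $[0,\infty)$ of $t \mapsto J(tw)$, and restricting to $[0,t^-]$ and $[t_{\max},\infty)$ respectively gives the stated min/max characterizations. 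Item (3) is just the observation that $t^-$ is $0$-homogeneous of degree $-1$: $t^-(sw) = s^{-1}t^-(w)$, so $t^-(w/\|w\|) = \|w\|$ precisely when $w \in \mathcal N^-$. The main obstacle is purely bookkeeping: verifying that the explicit constant $\lambda_1$ correctly makes the inequality $\lambda|f|_\infty\,(\text{Sobolev-type bound on }B) < \varphi_w(t_{\max})$ uniform in $w$, which requires carefully tracking the homogeneity degrees $2$, $q$, $2^*_\alpha$ and the powers of $\kappa_\alpha S(\alpha,N)$.
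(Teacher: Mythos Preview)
Your proposal is correct and follows exactly the fibering-map approach the paper invokes: the paper's proof merely introduces $s(t)=t^{2-q}A(w)-t^{2^*_\alpha-q}C(w)$ (your $\varphi_w$) and refers to \cite{wu2008} for the details, and you have carried out precisely that argument. Your computation of $\varphi_w(t_{\max})$, the Sobolev-based comparison $\lambda B(w)<\varphi_w(t_{\max})$ for $\lambda<\lambda_1$, and the calculus deductions of (1)--(4) and the ``if and only if'' clause are all the standard steps of that reference.
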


\begin{proof}
The proof is almost the same as that in \cite{wu2008}.
We need only to define
$$s(t)=t^{2-q}\kappa_{\alpha}\int\limits_{\mathcal{C}_{\Omega}}y^{1-\alpha}|\nabla w|^{2}dxdy-t^{2^{*}_{\alpha}-q}\int\limits_{\Omega\times\{0\}}|w|^{2^{*}_{\alpha}}dx.$$
Thus, we omit the details here.
\end{proof}

\begin{lemma}\label{le3}
There exists $\lambda_{2}>0$ such that for each $\lambda\in (0\,,\,\lambda_{2})$,
we have $\mathcal{N}^{0}=\{0\}$.
\end{lemma}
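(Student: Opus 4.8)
The plan is to argue by contradiction: suppose that for every $\lambda>0$ arbitrarily small there exists $w\in\mathcal{N}^{0}$ with $w\neq 0$ (note $0\in\mathcal{N}$ trivially, so the content is to rule out nonzero elements). For such $w$ both defining relations hold simultaneously, namely
\begin{equation*}
\kappa_{\alpha}\!\int_{\mathcal{C}_{\Omega}}\!y^{1-\alpha}|\nabla w|^{2}\,dxdy
=\lambda\!\int_{\Omega\times\{0\}}\!f(x)|w|^{q}\,dx+\!\int_{\Omega\times\{0\}}\!|w|^{2^{*}_{\alpha}}\,dx
\end{equation*}
and
\begin{equation*}
2\kappa_{\alpha}\!\int_{\mathcal{C}_{\Omega}}\!y^{1-\alpha}|\nabla w|^{2}\,dxdy
=\lambda q\!\int_{\Omega\times\{0\}}\!f(x)|w|^{q}\,dx+2^{*}_{\alpha}\!\int_{\Omega\times\{0\}}\!|w|^{2^{*}_{\alpha}}\,dx.
\end{equation*}
First I would eliminate each of the two nonlinear integrals in turn. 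Multiplying the first identity by $q$ and subtracting from the second gives
\begin{equation*}
(2-q)\,\kappa_{\alpha}\!\int_{\mathcal{C}_{\Omega}}\!y^{1-\alpha}|\nabla w|^{2}\,dxdy
=(2^{*}_{\alpha}-q)\!\int_{\Omega\times\{0\}}\!|w|^{2^{*}_{\alpha}}\,dx,
\end{equation*}
and multiplying the first by $2^{*}_{\alpha}$ and subtracting the second gives
\begin{equation*}
(2^{*}_{\alpha}-2)\,\kappa_{\alpha}\!\int_{\mathcal{C}_{\Omega}}\!y^{1-\alpha}|\nabla w|^{2}\,dxdy
=\lambda(2^{*}_{\alpha}-q)\!\int_{\Omega\times\{0\}}\!f(x)|w|^{q}\,dx.
\end{equation*}

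Next I would derive a lower and an upper bound on $\|w\|:=\|w\|_{H^{\frac{\alpha}{2}}_{0,L}(\mathcal{C}_{\Omega})}$ from these two identities and show they are incompatible once $\lambda$ is small. From the first consequence, using Lemma \ref{le1} (Sobolev trace inequality with best constant $S(\alpha,N)$) in the form $\int_{\Omega\times\{0\}}|w|^{2^{*}_{\alpha}}\le (\kappa_{\alpha}S(\alpha,N))^{-2^{*}_{\alpha}/2}\|w\|^{2^{*}_{\alpha}}$, one obtains
$\tfrac{2-q}{2^{*}_{\alpha}-q}\,\|w\|^{2}\le (\kappa_{\alpha}S(\alpha,N))^{-2^{*}_{\alpha}/2}\|w\|^{2^{*}_{\alpha}}$, hence a lower bound $\|w\|^{2^{*}_{\alpha}-2}\ge c_{1}$ with $c_{1}=\tfrac{2-q}{2^{*}_{\alpha}-q}(\kappa_{\alpha}S(\alpha,N))^{2^{*}_{\alpha}/2}$. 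From the second consequence, using $\int_{\Omega\times\{0\}}f(x)|w|^{q}\le |f|_{\infty}\int_{\Omega\times\{0\}}|w|^{q}$ together with Lemma \ref{le} (the embedding $\|w\|_{L^{q}(\Omega)}\le C\|w\|$), one obtains an upper bound $\|w\|^{2-q}\le \lambda\,c_{2}$ with $c_{2}=\tfrac{2^{*}_{\alpha}-q}{2^{*}_{\alpha}-2}|f|_{\infty}C^{q}$. Combining the two bounds yields $c_{1}^{(2-q)/(2^{*}_{\alpha}-2)}\le \|w\|^{2-q}\le \lambda c_{2}$, which is impossible once $\lambda<\lambda_{2}:=c_{1}^{(2-q)/(2^{*}_{\alpha}-2)}/c_{2}$; this contradiction forces $w=0$, so $\mathcal{N}^{0}=\{0\}$.

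The main obstacle is purely bookkeeping: one must be careful that in the upper-bound step $\int_{\Omega\times\{0\}}f(x)|w|^{q}$ is in fact strictly positive (otherwise the second consequence would already force $\|w\|=0$ and there is nothing to prove), so the case split is harmless, and one must track the constants so that $\lambda_{2}$ comes out depending only on $\alpha,N,q,|\Omega|,|f|_{\infty}$ and not on $w$. A secondary point worth a sentence is that $c_{1}>0$ and $c_{2}>0$ and $2-q>0$, $2^{*}_{\alpha}-2>0$, so all exponents and constants are legitimate. One can also simply absorb the Sobolev/embedding constants into a single explicit $\lambda_{2}$ and note (as the paper does for $\lambda_{1}$) that it is comparable to the expression appearing in Lemma \ref{le0}, so that later one takes $\lambda_{*}\le\min\{\lambda_{1},\lambda_{2}\}$.
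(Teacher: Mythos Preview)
Your proof is correct and follows the same strategy as the paper's: from the two relations defining $\mathcal{N}^{0}$ you extract the identities $(2-q)\|w\|^{2}=(2^{*}_{\alpha}-q)\int|w|^{2^{*}_{\alpha}}$ and $(2^{*}_{\alpha}-2)\|w\|^{2}=\lambda(2^{*}_{\alpha}-q)\int f|w|^{q}$, then use Sobolev on the first for a $\lambda$-free lower bound on $\|w\|$ and H\"older/embedding on the second for a $\lambda$-small upper bound, contradiction. The only cosmetic difference is that the paper encodes the final step through an auxiliary functional $K(w)$ (shown to be simultaneously zero on $\mathcal{N}^{0}$ and strictly positive for small $\lambda$), whereas you combine the two bounds directly; your version is a bit more transparent but the content is identical.
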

\begin{proof}
Suppose the contrary, there exists a $w\in \mathcal{N}^{0}\setminus\{0\}$,
such that
\begin{equation}\label{eq-le3}
\langle\Psi'(w)\,,\,w\rangle=0.
\end{equation}
Then, we consider the following two cases.

Case (i): $\int\limits_{\Omega\times\{0\}} f(x)|w|^{q}dx=0$.
 Then
\begin{equation*}\aligned
\langle \Psi'(w)\,,\,w\rangle&=2\kappa_{\alpha}\int\limits_{\mathcal{C}_{\Omega}}y^{1-\alpha}|\nabla w|^{2}dxdy-\lambda\, q \int\limits_{\Omega\times\{0\}} f(x)|w|^{q}dx
-\frac{2N}{N-\alpha}\int\limits_{\Omega\times\{0\}} |w|^{2^*_{\alpha}}dx\\
&=2\kappa_{\alpha}\int\limits_{\mathcal{C}_{\Omega}}y^{1-\alpha}|\nabla w|^{2}dxdy
-\frac{2N}{N-\alpha}\kappa_{\alpha}\int\limits_{\mathcal{C}_{\Omega}}y^{1-\alpha}|\nabla w|^{2}dxdy\\
&=-\frac{2\alpha}{N-\alpha}\|w\|^{2}_{H^{\frac{\alpha}{2}}_{0,L}(\mathcal{C}_{\Omega})}<0.
\endaligned
\end{equation*}
So, in this case $w\in \mathcal{N}^{-}$.

Case (ii): $\int\limits_{\Omega\times\{0\}} f(x)|w|^{q}dx\neq 0$. From \eqref{eq-le3}, we get  that
\begin{equation*}\label{eq07}\aligned
0&=2\kappa_{\alpha}\int\limits_{\mathcal{C}_{\Omega}}y^{1-\alpha}|\nabla w|^{2}dxdy-\lambda\, q \int\limits_{\Omega\times\{0\}} f(x)|w|^{q}dx
-2^*_{\alpha}\int\limits_{\Omega\times\{0\}} |w|^{2^*_{\alpha}}dx\\
&=(2-q)\kappa_{\alpha}\int\limits_{\mathcal{C}_{\Omega}}y^{1-\alpha}|\nabla w|^{2}dxdy-(2^{*}_{\alpha}-q)
\int\limits_{\Omega\times\{0\}} |w|^{2^*_{\alpha}}dx.\\
\endaligned\end{equation*}
This implies that
\begin{equation}\label{eq08}
\|w\|^{2}_{H^{\frac{\alpha}{2}}_{0,L}(\mathcal{C}_{\Omega})}
=\frac{2^*_{\alpha}-q}{2-q}\int\limits_{\Omega\times\{0\}} |w|^{2^*_{\alpha}}dx.
\end{equation}
Moreover, we have
\begin{equation}\label{eq09}
\aligned
\lambda\int\limits_{\Omega\times\{0\}}f(x)|w|^{q}dx
&=\kappa_{\alpha}\int\limits_{\mathcal{C}_{\Omega}}y^{1-\alpha}|\nabla w|^{2}dxdy-
\int\limits_{\Omega\times\{0\}} |w|^{2^*_{\alpha}}dx\\
&=\kappa_{\alpha}\int\limits_{\mathcal{C}_{\Omega}}y^{1-\alpha}|\nabla w|^{2}dxdy-
\frac{2-q}{2^*_{\alpha}-q}\kappa_{\alpha}\int\limits_{\mathcal{C}_{\Omega}}y^{1-\alpha}|\nabla w|^{2}dxdy\\
&=\frac{2^{*}_{\alpha}-2}{2^{*}_{\alpha}-q}\|w\|^{2}_{H^{\frac{\alpha}{2}}_{0,L}(\mathcal{C}_{\Omega})}.
\endaligned\end{equation}
Then, by \eqref{eq09}, \eqref{eq04} and the H\"{o}lder inequality, we obtain
\begin{equation}\label{eq09-1}
\|w\|^{2-q}_{H^{\frac{\alpha}{2}}_{0,L}(\mathcal{C}_{\Omega})}\leq \lambda\Big(\frac{2^{*}_{\alpha}-q}{2^{*}_{\alpha}-2}\Big)
\,\Big(\kappa_{\alpha}S(\alpha,N)\Big)^{-\frac{q}{2}}|f|_{L^{\infty}}.
\end{equation}

Let $K:\mathcal{N}\to \mathbb{R}$ be given by
$$ K(w)=C(N,\alpha)
\|w\|^{\frac{2(2^*_{\alpha}-1)}{2^*_{\alpha}-2}}_{H^{\frac{\alpha}{2}}_{0,L}(\mathcal{C}_{\Omega})}\Big(\int\limits_{\Omega\times\{0\}} |w|^{2^*_{\alpha}}dx\Big)^{\frac{1}{2-2^*_{\alpha}}}-\lambda\int\limits_{\Omega\times\{0\}}f(x)|w|^{q}dx,$$
where $C(N,\alpha)=\Big{(}\frac{2^{*}_{\alpha}-2}{2-q}\Big{)}
\Big{(}\frac{2-q}{2^{*}_{\alpha}-q}\Big{)}^{\frac{2^*_{\alpha}-1}{2^*_{\alpha}-2}}$.
Then $K(w)=0$
for all $w\in \mathcal{N}^{0}$. Indeed, by \eqref{eq08} and \eqref{eq09},
\begin{equation*}
\aligned
 K(w)
 &=C(N,\alpha)
\|w\|^{\frac{2(2^*_{\alpha}-1)}{2^*_{\alpha}-2}}_{H^{\frac{\alpha}{2}}_{0,L}(\mathcal{C}_{\Omega})}\Big(\int\limits_{\Omega\times\{0\}} |w|^{2^*_{\alpha}}dx\Big)^{\frac{1}{2-2^*_{\alpha}}}-\lambda\int\limits_{\Omega\times\{0\}}f(x)|w|^{q}dx\\
&=\Big{(}\frac{2^{*}_{\alpha}-2}{2-q}\Big{)}\Big{(}\frac{2-q}{2^{*}_{\alpha}-q}\Big{)}^{\frac{2^*_{\alpha}-1}{2^*_{\alpha}-2}}
\Big(\frac{2^*_{\alpha}-q}{2-q}\Big)^{\frac{1}{2^*-2}} \|w\|^{2}_{H^{\frac{\alpha}{2}}_{0,L}(\mathcal{C}_{\Omega})}-\lambda\int\limits_{\Omega\times\{0\}}f(x)|w|^{q}dx\\
&=\frac{2^*_{\alpha}-2}{2^*_{\alpha}-q}\,\|w\|^{2}_{H^{\frac{\alpha}{2}}_{0,L}(\mathcal{C}_{\Omega})}
-\lambda\int\limits_{\Omega\times\{0\}}f(x)|w|^{q}dx\\
&=0.
\endaligned
\end{equation*}

On the other hand, by \eqref{eq09},\eqref{eq09-1}, we have
\begin{equation}\label{eq00}
\aligned
K(w)
&\geq C(N,\alpha)
\|w\|^{\frac{2(2^*_{\alpha}-1)}{2^*_{\alpha}-2}}_{H^{\frac{\alpha}{2}}_{0,L}(\mathcal{C}_{\Omega})}\Big{(}\int\limits_{\Omega\times\{0\}} |w|^{2^*_{\alpha}}dx\Big{)}^{\frac{1}{2-2^*_{\alpha}}}-\lambda |f|_{L^{\infty}}|w|_{L^{2^*_{\alpha}}}^{q}\\
&\geq C(N,\alpha)\Big(\kappa_{\alpha}\,S(\alpha,N)\Big)^{\frac{N+\alpha}{2\alpha}}
\Big(\int\limits_{\Omega\times\{0\}}|w|^{2^*_{\alpha}}dx\Big)^{\frac{1}{2^*_{\alpha}}}
-\lambda|f|_{L^{\infty}}|w|_{L^{2^*_{\alpha}}}^{q} \\
&\geq \lambda|f|_{L^{\infty}}|w|^{q}_{L^{2^*}}\Big{[}
\frac{\Big(\frac{2^*_{\alpha}-2}{2^*_{\alpha}-q}\Big)^{\frac{1}{2-q}}
\Big(2-q\Big)^{\frac{n-\alpha}{2\alpha}}
\Big(\kappa_{\alpha}\,S(\alpha,N)\Big)^{\frac{q-1}{2-q}+\frac{N+\alpha}{2\alpha}}}
{(\lambda|f|_{L^{\infty}})^{\frac{1}{2-q}}}
-1 \Big{]}.\\
\endaligned
\end{equation}
This implies that there exists
$$\lambda_{2}:=|f|_{L^{\infty}}^{-1}\Big(\frac{2^*_{\alpha}-2}{2^*_{\alpha}-q}\Big)
\Big(2-q\Big)^{\frac{(N-\alpha)(2-q)}{2\alpha}}
(\kappa_{\alpha}\,S(\alpha,N))^{\frac{2\alpha(q-1)+(N+\alpha)(2-q)}{2\alpha}}
$$
 such that for each
$\lambda\in (0\,,\,\lambda_{2})$, we have $K(w)>0$ for all
$w\in \mathcal{N}^{0}\setminus\{0\}$, which yields
a contradiction. Thus, we can conclude that $\mathcal{N}^{0}=\{0\}$
for all $\lambda\in (0\,,\,\lambda_{2})$.
\end{proof}

\begin{lemma}\label{le4}
If $w\in \mathcal{N}^{+}$ and $w\not\neq0$, then $\int\limits_{\Omega\times\{0\}}f(x)|w|^{q}dx>0$.
\end{lemma}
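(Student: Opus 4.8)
The plan is to argue by contradiction: suppose $w\in\mathcal{N}^{+}$, $w\neq 0$, but $\int_{\Omega\times\{0\}}f(x)|w|^{q}\,dx\le 0$. First I would write out the two defining relations for an element of $\mathcal{N}^{+}$. Since $w\in\mathcal{N}$, we have $\Psi(w)=0$, i.e.
\begin{equation*}
\kappa_{\alpha}\int_{\mathcal{C}_{\Omega}}y^{1-\alpha}|\nabla w|^{2}\,dxdy
=\lambda\int_{\Omega\times\{0\}}f(x)|w|^{q}\,dx+\int_{\Omega\times\{0\}}|w|^{2^{*}_{\alpha}}\,dx,
\end{equation*}
and since $w\in\mathcal{N}^{+}$ we have $\langle\Psi'(w),w\rangle>0$, i.e.
\begin{equation*}
2\kappa_{\alpha}\int_{\mathcal{C}_{\Omega}}y^{1-\alpha}|\nabla w|^{2}\,dxdy
-\lambda q\int_{\Omega\times\{0\}}f(x)|w|^{q}\,dx-2^{*}_{\alpha}\int_{\Omega\times\{0\}}|w|^{2^{*}_{\alpha}}\,dx>0.
\end{equation*}

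The key step is to eliminate one of the two nonlinear integrals between these relations so as to isolate the sign of $\int_{\Omega\times\{0\}}f(x)|w|^{q}\,dx$. Substituting the $\mathcal{N}$-identity into the $\mathcal{N}^{+}$-inequality to cancel the gradient term, I would obtain
\begin{equation*}
(2-q)\,\lambda\int_{\Omega\times\{0\}}f(x)|w|^{q}\,dx+(2-2^{*}_{\alpha})\int_{\Omega\times\{0\}}|w|^{2^{*}_{\alpha}}\,dx>0,
\end{equation*}
that is,
\begin{equation*}
(2-q)\,\lambda\int_{\Omega\times\{0\}}f(x)|w|^{q}\,dx>(2^{*}_{\alpha}-2)\int_{\Omega\times\{0\}}|w|^{2^{*}_{\alpha}}\,dx.
\end{equation*}
Since $q<2$ and $2^{*}_{\alpha}>2$, the left side has the sign of $\int_{\Omega\times\{0\}}f(x)|w|^{q}\,dx$ while the right side is strictly positive unless $w\equiv 0$ on $\Omega\times\{0\}$. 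As $w\neq 0$ in $H^{\frac{\alpha}{2}}_{0,L}(\mathcal{C}_{\Omega})$, the trace $\mathrm{tr}_{\Omega}w$ cannot vanish identically (otherwise $\|w\|_{H^{\frac{\alpha}{2}}_{0,L}(\mathcal{C}_{\Omega})}=0$ by Lemma~\ref{le}, forcing $w=0$), so $\int_{\Omega\times\{0\}}|w|^{2^{*}_{\alpha}}\,dx>0$. Hence $\int_{\Omega\times\{0\}}f(x)|w|^{q}\,dx>0$, contradicting our assumption and proving the claim.

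The only subtlety — and the step I would be most careful about — is ruling out the degenerate case $\mathrm{tr}_{\Omega}w=0$; this is immediate from Lemma~\ref{le}, which gives $\|w\|_{H^{\frac{\alpha}{2}}_{0,L}(\mathcal{C}_{\Omega})}^{2}\le C\kappa_{\alpha}\int y^{1-\alpha}|\nabla w|^{2}$, but one should note that actually the norm equals $\big(\kappa_{\alpha}\int y^{1-\alpha}|\nabla w|^{2}\big)^{1/2}$ and the relevant point is simply that a nonzero element of $H^{\frac{\alpha}{2}}_{0,L}(\mathcal{C}_{\Omega})$ has nonzero trace, equivalently nonzero $L^{2^{*}_{\alpha}}(\Omega)$-norm of its trace by the extension isometry with $H^{\frac{\alpha}{2}}_{0}(\Omega)$. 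Everything else is elementary linear algebra in the two scalar identities, so no real obstacle is expected.
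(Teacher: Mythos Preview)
Your proof is correct and follows essentially the same strategy as the paper: combine the Nehari identity $\Psi(w)=0$ with the inequality $\langle\Psi'(w),w\rangle>0$ and eliminate one of the integrals to force $\lambda\int_{\Omega\times\{0\}}f(x)|w|^{q}\,dx>\dfrac{2^{*}_{\alpha}-2}{2-q}\int_{\Omega\times\{0\}}|w|^{2^{*}_{\alpha}}\,dx>0$. The only cosmetic difference is that the paper first eliminates the $f|w|^{q}$-integral (getting $\|w\|^{2}>\frac{2^{*}_{\alpha}-q}{2-q}\int|w|^{2^{*}_{\alpha}}$ and then rewriting $\lambda\int f|w|^{q}=\|w\|^{2}-\int|w|^{2^{*}_{\alpha}}$), whereas you eliminate the gradient term directly; the two linear combinations are equivalent.
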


\begin{proof}
 From $w\in \mathcal{N}^{+}$, we have
\begin{equation*}\aligned
2\kappa_{\alpha}\int\limits_{\mathcal{C}_{\Omega}}y^{1-\alpha}|\nabla w|^{2}dxdy&> \lambda q\int\limits_{\Omega\times\{0\}} f(x)|w|^{q}dx
+2^{*}_{\alpha}\int\limits_{\Omega\times\{0\}} |w|^{2^*_{\alpha}}dx\\
&=q\kappa_{\alpha}\int\limits_{\mathcal{C}_{\Omega}}y^{1-\alpha}|\nabla w|^{2}dxdy+(2^*_{\alpha}-q)\int\limits_{\Omega\times\{0\}} |w|^{2^*_{\alpha}}dx,\\
\endaligned
\end{equation*}
that is, $$\kappa_{\alpha}\int\limits_{\mathcal{C}_{\Omega}}y^{1-\alpha}|\nabla w|^{2}dxdy> \frac{2^{*}_{\alpha}-q}{2-q}\int\limits_{\Omega\times\{0\}} |w|^{2^*_{\alpha}}dx.$$
Then, we have
\begin{equation*}\aligned
\lambda\int\limits_{\Omega\times\{0\}} f(x)|u|^{q+1}dx
&=\kappa_{\alpha}\int\limits_{\mathcal{C}_{\Omega}}y^{1-\alpha}|\nabla w|^{2}dxdy-\int\limits_{\Omega\times\{0\}}|w|^{2^*_{\alpha}}dx\\
&>\frac{2^*_{\alpha}-2}{2-q}\int\limits_{\Omega\times\{0\}}|w|^{2^{*}_{\alpha}}dx>0.\\
\endaligned\end{equation*}
This completes the proof.
\end{proof}

The following lemma shows that the minimizers on $\mathcal{N}$ are
actually the critical points of $J$.
\begin{lemma}\label{le5}
For $\lambda\in (0\,,\,\lambda_{2})$.
If $w\in H^{\frac{\alpha}{2}}_{0,L}(\mathcal{C}_{\Omega})$ is a local
minimizer for $J$ on $\mathcal{N}$, then
$J'(w)=0$ in $H^{-\frac{\alpha}{2}}(\mathcal{C}_{\Omega})$,
where $H^{-\frac{\alpha}{2}}(\mathcal{C}_{\Omega})$ denotes the dual space of $H^{\frac{\alpha}{2}}_{0,L}(\mathcal{C}_{\Omega})$.
\end{lemma}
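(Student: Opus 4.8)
The plan is to use the Lagrange multiplier rule together with the key fact, guaranteed by Lemma~\ref{le3}, that for $\lambda\in(0,\lambda_2)$ the set $\mathcal{N}^0=\{0\}$, so that any nonzero local minimizer $w$ of $J$ on $\mathcal{N}$ lies in $\mathcal{N}^+\cup\mathcal{N}^-$ and in particular satisfies $\langle\Psi'(w),w\rangle\neq 0$. First I would record that $w\neq 0$: since $\mathcal{N}=\{w:\Psi(w)=0\}$ and $0\in\mathcal{N}$ trivially, but $J(0)=0$ and one checks that $\inf_{\mathcal{N}}J<0$ (or at least that the minimizer in question is the one produced with negative energy), so the minimizer is nontrivial; alternatively the statement is understood for nonzero $w$. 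Next, since $w$ is a local minimizer of $J$ constrained to the $C^1$ manifold $\{\Psi=0\}$ and $\Psi'(w)\neq 0$ (because $\langle\Psi'(w),w\rangle\neq 0$), the Lagrange multiplier theorem yields a $\mu\in\mathbb{R}$ with
\begin{equation*}
J'(w)=\mu\,\Psi'(w)\quad\text{in }H^{-\frac{\alpha}{2}}(\mathcal{C}_\Omega).
\end{equation*}

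The heart of the argument is then to show $\mu=0$. Pairing the above identity with $w$ gives $\langle J'(w),w\rangle=\mu\langle\Psi'(w),w\rangle$. But $\langle J'(w),w\rangle=\Psi(w)=0$ since $w\in\mathcal{N}$. Hence $\mu\langle\Psi'(w),w\rangle=0$, and since $\langle\Psi'(w),w\rangle\neq 0$ by Lemma~\ref{le3} (as $w\notin\mathcal{N}^0$), we conclude $\mu=0$, i.e. $J'(w)=0$. I would spell out the computation $\langle J'(w),w\rangle=\Psi(w)$: indeed $\langle J'(w),w\rangle=\kappa_\alpha\int_{\mathcal{C}_\Omega}y^{1-\alpha}|\nabla w|^2\,dxdy-\lambda\int_{\Omega\times\{0\}}f|w|^q\,dx-\int_{\Omega\times\{0\}}|w|^{2^*_\alpha}\,dx=\Psi(w)=0$.

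The main obstacle is the verification that the Lagrange multiplier rule applies, i.e. that $\Psi$ is $C^1$ near $w$ and that $\Psi'(w)\neq 0$ as an element of the dual space — not merely that $\langle\Psi'(w),w\rangle\neq 0$, though the latter of course implies the former. The $C^1$ regularity of $\Psi$ on $H^{\frac{\alpha}{2}}_{0,L}(\mathcal{C}_\Omega)$ follows from the compact/continuous trace embeddings into $L^q(\Omega)$ and $L^{2^*_\alpha}(\Omega)$ recorded after Lemma~\ref{le1} (with $1<q<2<2^*_\alpha$), so $J$ and $\Psi$ are well-defined $C^1$ functionals; this is routine but should be invoked. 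One then argues as follows: suppose, for contradiction, $w$ is a local minimizer on $\mathcal{N}$ but $J'(w)\neq 0$. Since $\langle\Psi'(w),w\rangle\neq0$, the implicit function theorem lets us construct, for $\varepsilon$ small, a $C^1$ curve $t\mapsto h(t)$ with $h(0)=1$ and $h(t)(w+t\,v)\in\mathcal{N}$ for any fixed direction $v$ with $\langle J'(w),v\rangle\neq 0$; differentiating $J(h(t)(w+tv))$ at $t=0$ and using $h'(0)=0$ (which itself follows from $\langle J'(w),w\rangle=0$ and the defining relation for $h$) shows the derivative equals $\langle J'(w),v\rangle\neq 0$, contradicting minimality. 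This deformation argument — essentially the standard Nehari-manifold projection lemma, as in Brown--Zhang / \cite{wu2008} — is the one place where care is needed; everything else is bookkeeping with the identities already derived in Lemmas~\ref{le0}--\ref{le3}.
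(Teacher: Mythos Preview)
Your proof is correct and follows essentially the same approach as the paper: apply the Lagrange multiplier rule to obtain $J'(w)=\mu\,\Psi'(w)$, pair with $w$ to get $0=\langle J'(w),w\rangle=\mu\langle\Psi'(w),w\rangle$, and then use Lemma~\ref{le3} (i.e., $\mathcal{N}^0=\{0\}$ for $\lambda\in(0,\lambda_2)$) to conclude $\mu=0$. Your version is in fact more careful than the paper's in justifying the applicability of the Lagrange rule (checking $C^1$ regularity of $\Psi$ and that $\Psi'(w)\neq 0$ in the dual), but the core idea is identical.
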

\begin{proof}
If $w_{0}\in \mathcal{N}$ is a local minimizer of $J$, then $w_{0}$ is
a nontrivial solution of the optimization problem
\begin{equation*}
\text{minimize} \,\,J(w)\,\,
\text{subject to}\,\,\langle \Psi'(w)=0,
\end{equation*}
Hence by the theory of Lagrange multiplies, there exists $\theta\in \mathbb{R}$
such that
$J'(w_{0})=\theta\Psi'(w_{0})$ in $H^{-\frac{\alpha}{2}}(\mathcal{C}_{\Omega})$.
This implies that
\begin{equation}\label{eq11}
\langle J'(w_{0})\,,\,w_{0}\rangle=\theta\langle \Psi'(w_{0})\,,\,w_{0}\rangle.
\end{equation}

By Lemma \ref{le3}, for every $w\not\neq0$,
we have $\langle\Psi'(w_{0})\,,\,w_{0}\rangle\neq0$ and so by \eqref{eq11}, $\theta=0$.
This completes the proof.
\end{proof}

\begin{lemma}\label{le6}
The functional $J$ is coercive and bounded from below on $\mathcal{N}$.
\end{lemma}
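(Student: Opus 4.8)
The plan is to show directly that for $w\in\mathcal{N}$ the value $J(w)$ is bounded below by a function of $\|w\|_{H^{\frac{\alpha}{2}}_{0,L}(\mathcal{C}_{\Omega})}$ that tends to $+\infty$ as the norm grows. First I would use the Nehari constraint $\langle J'(w),w\rangle=0$, i.e.
\begin{equation*}
\kappa_{\alpha}\int\limits_{\mathcal{C}_{\Omega}}y^{1-\alpha}|\nabla w|^{2}dxdy=\lambda\int\limits_{\Omega\times\{0\}}f(x)|w|^{q}dx+\int\limits_{\Omega\times\{0\}}|w|^{2^{*}_{\alpha}}dx,
\end{equation*}
to eliminate the critical term from $J$. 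Substituting $\int_{\Omega\times\{0\}}|w|^{2^{*}_{\alpha}}dx=\kappa_{\alpha}\int_{\mathcal{C}_{\Omega}}y^{1-\alpha}|\nabla w|^{2}dxdy-\lambda\int_{\Omega\times\{0\}}f(x)|w|^{q}dx$ into the expression for $J$ gives
\begin{equation*}
J(w)=\Big(\frac{1}{2}-\frac{1}{2^{*}_{\alpha}}\Big)\kappa_{\alpha}\int\limits_{\mathcal{C}_{\Omega}}y^{1-\alpha}|\nabla w|^{2}dxdy-\lambda\Big(\frac{1}{q}-\frac{1}{2^{*}_{\alpha}}\Big)\int\limits_{\Omega\times\{0\}}f(x)|w|^{q}dx,
\end{equation*}
so that, writing $\|w\|$ for $\|w\|_{H^{\frac{\alpha}{2}}_{0,L}(\mathcal{C}_{\Omega})}$,
\begin{equation*}
J(w)=\frac{\alpha}{2N}\|w\|^{2}-\lambda\,\frac{2^{*}_{\alpha}-q}{q\,2^{*}_{\alpha}}\int\limits_{\Omega\times\{0\}}f(x)|w|^{q}dx.
\end{equation*}

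Next I would control the lower-order term. Since $f$ is continuous on the bounded domain $\bar\Omega$, $|f|_{L^{\infty}}<\infty$, and by Lemma \ref{le} (with $r=2^{*}_{\alpha}$, together with Hölder's inequality on the bounded set $\Omega$) there is a constant $C>0$, depending only on $q,\alpha,N,|\Omega|$, with
\begin{equation*}
\int\limits_{\Omega\times\{0\}}f(x)|w|^{q}dx\le |f|_{L^{\infty}}\Big(\int\limits_{\Omega\times\{0\}}|w|^{q}dx\Big)\le C\,|f|_{L^{\infty}}\,\|w\|^{q}.
\end{equation*}
Combining the two displays yields
\begin{equation*}
J(w)\ge\frac{\alpha}{2N}\|w\|^{2}-\lambda\,C'\,|f|_{L^{\infty}}\,\|w\|^{q}
\end{equation*}
for a suitable constant $C'>0$. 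Since $1<q<2$, the right-hand side is a function $g(t)=\frac{\alpha}{2N}t^{2}-\lambda C'|f|_{L^{\infty}}t^{q}$ of $t=\|w\|\ge 0$ which is bounded below on $[0,\infty)$ (it attains its minimum at some finite $t_{0}>0$ and tends to $+\infty$ as $t\to\infty$). Hence $J$ is bounded below on $\mathcal{N}$ by $\min_{t\ge 0}g(t)$, and since $g(t)\to+\infty$ as $t\to\infty$, $J$ is coercive on $\mathcal{N}$ as well.

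There is no real obstacle here; the only point requiring a little care is that when $q<2$ the subtracted term is genuinely of lower order, so that it cannot overwhelm the quadratic leading term — this is exactly the standard feature of concave–convex problems and is what makes the elementary one-variable estimate on $g$ work. All the functional-analytic input needed (the trace/Sobolev inequality of Lemma \ref{le}, boundedness of $\Omega$, and the Nehari identity) has already been established above.
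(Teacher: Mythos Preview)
Your proof is correct and follows essentially the same route as the paper: use the Nehari identity to eliminate the critical term, bound $\int_{\Omega\times\{0\}} f(x)|w|^{q}\,dx$ via $|f|_{L^\infty}$ and the Sobolev/trace inequality to obtain $J(w)\ge \frac{\alpha}{2N}\|w\|^{2}-C\lambda\|w\|^{q}$, and then exploit $q<2$. The paper differs only cosmetically, using the explicit Sobolev constant $\kappa_{\alpha}S(\alpha,N)$ and computing the minimum of the one-variable function to display an explicit lower bound, whereas you argue qualitatively that such a minimum exists.
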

\begin{proof}
For $w\in \mathcal{N}$, we have
\begin{equation*}\aligned
J(w)
&=(\frac{1}{2}-\frac{1}{2^*_{\alpha}})\kappa_{\alpha}\int\limits_{\mathcal{C}_{\Omega}}y^{1-\alpha}|\nabla w|^{2}dxdy-\lambda(\frac{1}{q}-\frac{1}{2^*_{\alpha}})\int\limits_{\Omega\times\{0\}} f(x)|u|^{q}dx\\
&\geq \frac{\alpha}{2N}\|w\|^{2}_{H^{\frac{\alpha}{2}}_{0,L}(\mathcal{C}_{\Omega})}-
\lambda\Big(\frac{2^{*}_{\alpha}-q}{q2^*_{\alpha}}\Big) |f|_{L^{\infty}}\Big(\kappa_{\alpha}
\,S(\alpha,N)\Big)^{-\frac{q}{2}}\|w\|^{q}_{H^{\frac{\alpha}{2}}_{0,L}(\mathcal{C}_{\Omega})}\\
&\geq \frac{q-2}{2}\Big(\frac{N}{\alpha}\Big)^{\frac{q}{2-q}}\Big(\lambda\,C\Big)^{\frac{2}{2-q}},\\
\endaligned\end{equation*}
where $C=\Big(\frac{2^{*}_{\alpha}-q}{q2^*_{\alpha}}\Big) |f|_{L^{\infty}}\kappa_{\alpha}^{-\frac{q}{2}}
\,S(\alpha,N)^{-\frac{q}{2}}$.
This tell us that $J$ is coercive and bounded from below on $\mathcal{N}$.
\end{proof}

In the end of this section, we will use the idea of \cite{ct1992}
 to get the property of $\mathcal{N}$.
\begin{lemma}\label{le7}
For each $w\in \mathcal{N}$, $w\not\equiv0$,
 there exists $r>0$ and a differentiable function $t=t(v)$
such that $t=t(v)>0$ for all $v\in \{w\in H^{\frac{\alpha}{2}}_{0,L}(\mathcal{C}_{\Omega}):\,
\|w\|^{2}_{H^{\frac{\alpha}{2}}_{0,L}(\mathcal{C}_{\Omega})}<\varepsilon\}$
satisfying that
$$ t(0)=1,\, t(v)(w-v)\in \mathcal{N},$$
and
$$\langle t'(0)\,,\,v\rangle
=\frac{2\kappa_{\alpha}\int\limits_{\mathcal{C}_{\Omega}} y^{1-\alpha}\nabla w\nabla vdxdy
-q\lambda\int\limits_{\Omega\times\{0\}} f(x)|w|^{q-2}wvdx
-2^*_{\alpha}\int\limits_{\Omega\times\{0\}}  |w|^{2^*_{\alpha}-2}wvdx}
{(2-q)\kappa_{\alpha}\int\limits_{\mathcal{C}_{\Omega}} y^{1-\alpha}|\nabla w|^{2}dxdy-(2^*_{\alpha}-q)\int\limits_{\Omega\times\{0\}}  |w|^{2^*_{\alpha}}dx}$$
for all $v\in H^{\frac{\alpha}{2}}_{0,L}(\mathcal{C}_{\Omega})$.
\end{lemma}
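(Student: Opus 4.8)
The plan is to apply the implicit function theorem to a suitable auxiliary function built from the Nehari constraint, which is the standard device from Brezis–Coron–Nirenberg type arguments (here following \cite{ct1992}). For fixed $w\in\mathcal{N}$ with $w\not\equiv 0$, I would introduce the map $F:\mathbb{R}^{+}\times H^{\frac{\alpha}{2}}_{0,L}(\mathcal{C}_{\Omega})\to\mathbb{R}$ defined by
$$
F(t,v)=t^{2}\kappa_{\alpha}\!\int\limits_{\mathcal{C}_{\Omega}}\!y^{1-\alpha}|\nabla(w-v)|^{2}dxdy
-\lambda\, t^{q}\!\int\limits_{\Omega\times\{0\}}\!f(x)|w-v|^{q}dx
-t^{2^{*}_{\alpha}}\!\int\limits_{\Omega\times\{0\}}\!|w-v|^{2^{*}_{\alpha}}dx,
$$
so that $F(t,v)=\langle J'(t(w-v)),t(w-v)\rangle$ and the condition $t(w-v)\in\mathcal{N}$ is exactly $F(t,v)=0$. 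First I would check $F(1,0)=\langle J'(w),w\rangle=0$ since $w\in\mathcal{N}$. Then I would compute the partial derivative in $t$ at $(1,0)$:
$$
\frac{\partial F}{\partial t}(1,0)
=2\kappa_{\alpha}\!\int\limits_{\mathcal{C}_{\Omega}}\!y^{1-\alpha}|\nabla w|^{2}dxdy
-\lambda q\!\int\limits_{\Omega\times\{0\}}\!f(x)|w|^{q}dx
-2^{*}_{\alpha}\!\int\limits_{\Omega\times\{0\}}\!|w|^{2^{*}_{\alpha}}dx
=\langle\Psi'(w),w\rangle.
$$
Using $\langle J'(w),w\rangle=0$ to eliminate the $f$-term, this equals $(2-q)\kappa_{\alpha}\int_{\mathcal{C}_{\Omega}}y^{1-\alpha}|\nabla w|^{2}dxdy-(2^{*}_{\alpha}-q)\int_{\Omega\times\{0\}}|w|^{2^{*}_{\alpha}}dx$, which is precisely the denominator appearing in the claimed formula. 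By Lemma~\ref{le3}, for $\lambda\in(0,\lambda_{2})$ and $w\not\equiv 0$ we have $w\notin\mathcal{N}^{0}$, hence $\langle\Psi'(w),w\rangle\neq 0$, so $\frac{\partial F}{\partial t}(1,0)\neq 0$.

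With these facts the implicit function theorem applies: since $F$ is $C^{1}$ (the nonlinearities $|w-v|^{q}$ and $|w-v|^{2^{*}_{\alpha}}$ give $C^{1}$ superposition operators on $H^{\frac{\alpha}{2}}_{0,L}(\mathcal{C}_{\Omega})$ by the Sobolev embeddings in Lemma~\ref{le1} and Lemma~\ref{le}, with $q,2^{*}_{\alpha}>1$), there exist $\varepsilon>0$ and a $C^{1}$ function $t=t(v)$ defined on the ball $\{v:\|v\|^{2}_{H^{\frac{\alpha}{2}}_{0,L}(\mathcal{C}_{\Omega})}<\varepsilon\}$ with $t(0)=1$, $F(t(v),v)=0$, i.e.\ $t(v)(w-v)\in\mathcal{N}$. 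Shrinking $\varepsilon$ if necessary and using continuity of $t$, we keep $t(v)>0$. Finally, differentiating the identity $F(t(v),v)\equiv 0$ at $v=0$ in the direction $v$ and solving for $\langle t'(0),v\rangle$ gives
$$
\langle t'(0),v\rangle=-\frac{\partial_{v}F(1,0)[v]}{\partial_{t}F(1,0)},
$$
and a direct computation of $\partial_{v}F(1,0)[v]=-2\kappa_{\alpha}\int_{\mathcal{C}_{\Omega}}y^{1-\alpha}\nabla w\nabla v\,dxdy+q\lambda\int_{\Omega\times\{0\}}f(x)|w|^{q-2}wv\,dx+2^{*}_{\alpha}\int_{\Omega\times\{0\}}|w|^{2^{*}_{\alpha}-2}wv\,dx$ yields exactly the stated expression (the minus sign in front of $\partial_{v}F$ cancels the minus signs in $\partial_{v}F$).

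The only genuine subtlety — the "hard part" — is verifying that $F$ is continuously differentiable as a functional on the infinite-dimensional space, i.e.\ that $v\mapsto\int_{\Omega\times\{0\}}f(x)|w-v|^{q}dx$ and $v\mapsto\int_{\Omega\times\{0\}}|w-v|^{2^{*}_{\alpha}}dx$ are $C^{1}$ with the Gateaux derivatives computed above; for the critical exponent term this uses that the trace embedding $H^{\frac{\alpha}{2}}_{0,L}(\mathcal{C}_{\Omega})\hookrightarrow L^{2^{*}_{\alpha}}(\Omega\times\{0\})$ is bounded (Lemma~\ref{le1}), so the Nemytskii operator associated with $|{\cdot}|^{2^{*}_{\alpha}}$ maps into $L^{1}$ and is differentiable in the appropriate sense. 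Everything else is the mechanical bookkeeping of the implicit function theorem, and I would relegate it to a remark that the argument is identical to \cite{ct1992}.
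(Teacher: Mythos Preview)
Your proposal is correct and follows essentially the same route as the paper: define $F(t,v)=\langle J'(t(w-v)),t(w-v)\rangle$, observe $F(1,0)=0$, use Lemma~\ref{le3} to ensure $\partial_{t}F(1,0)=\langle\Psi'(w),w\rangle\neq 0$, apply the implicit function theorem, and read off the derivative formula. The paper's proof is in fact slightly terser than yours, since it does not spell out the $C^{1}$ regularity of $F$ or the explicit computation of $\partial_{v}F(1,0)[v]$; your added justification on these points is accurate and only strengthens the argument.
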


\begin{proof}
 Define $F:\,\mathbb{R}\times H^{\frac{\alpha}{2}}_{0,L}(\mathcal{C}_{\Omega})\to \mathbb{R}$ as follows.
\begin{equation*}\label{eq27}
\aligned
F(t,v)
&=\langle J'(t(w-v))\,,\,t(w-v)\rangle\\
&=t^{2}\kappa_{\alpha}\int\limits_{\mathcal{C}_{\Omega}}y^{1-\alpha} |\nabla (w-v)|^{2}dxdy-\lambda\,t^{q}\int\limits_{\Omega\times\{0\}}  f(x)|w-v|^{q}dx-t^{2^*_{\alpha}}\int\limits_{\Omega\times\{0\}}  |w-v|^{2^*_{\alpha}}dx,
\endaligned
\end{equation*}
for all $v\in \mathcal{N}$.

Since $F(1,0)=\langle J'(w)\,,\,w\rangle=0$ and by Lemma \ref{le3},
we obtain
\begin{equation*}\label{eq28}
\aligned
F'_{t}(1,0)
&=2\kappa_{\alpha}\int\limits_{\mathcal{C}_{\Omega}} y^{1-\alpha}|\nabla w|^{2}dxdy
-\lambda q\int\limits_{\Omega\times\{0\}}  f(x)|w|^{q}dx
-2^*_{\alpha}\int\limits_{\Omega\times\{0\}}  |w|^{2^*_{\alpha}}dx\\
&=(2-q)\kappa_{\alpha}\int\limits_{\mathcal{C}_{\Omega}}y^{1-\alpha}|\nabla w|^{2}dxdy-(2^*_{\alpha}-q)\int\limits_{\Omega\times\{0\}}  |w|^{2^*_{\alpha}}dx\neq0.
\endaligned
\end{equation*}
Applying the implicit function theorem at the point $(1,0)$,
we get that there exist $\varepsilon>0$ small and a function
$t=t(v)$ satisfying $t(0)=1$ and
\begin{equation*}
\langle t'(0)\,,\,v\rangle
=\frac{2\kappa_{\alpha}\int\limits_{\mathcal{C}_{\Omega}} y^{1-\alpha}\nabla w\nabla vdxdy
-q\lambda\int\limits_{\Omega\times\{0\}}  f(x)|w|^{q-2}wvdx
-2^{*}_{\alpha}\int\limits_{\Omega\times\{0\}}  |w|^{2^*_{\alpha}-2}wvdx}
{(2-q)\kappa_{\alpha}\int\limits_{\mathcal{C}_{\Omega}}y^{1-\alpha} |\nabla w|^{2}dxdy-(2^*_{\alpha}-q)\int\limits_{\Omega\times\{0\}}  |w|^{2^*_{\alpha}dx}}.
\end{equation*}
Moreover, there is a $t(v)$ such that $F(t(v),\,v)=0 $ for all $v\in \{w\in H^{\frac{\alpha}{2}}_{0,L}(\mathcal{C}_{\Omega}):\,
\|w\|^{2}_{H^{\frac{\alpha}{2}}_{0,L}(\mathcal{C}_{\Omega})}<\varepsilon\}$,
which is equivalent to $\langle J'(t(v)(w-v))\,,\,t(v)(w-v)\rangle=0$,
that is, $t(v)(w-v)\in \mathcal{N}$. We prove the lemma.
\end{proof}

\section{Proof of Theorem \ref{th01}}
\subsection{The minimizer solution on $\mathcal{N}^{+}$}
Let \begin{equation}\label{lambda}
\lambda_{*}=\min\{\lambda_{1}\,,\,\lambda_{2}\}.
\end{equation}

In this subsection, we show that problem \eqref{eq02} has a
position solution if $\lambda<\lambda_{*}$, which is the minimizer
of $J$ on $\mathcal{N}^{+}$.

By Lemma \ref{le6}, for $\lambda\in (0\,,\,\lambda_{*})$,
$J$ is coercive and bounded from below on $\mathcal{N}$ and so on $\mathcal{N}^{+}$.
Therefore, we define
$$m^{+}=\inf \{J(w):\,w\in\mathcal{N}^{+}\}. $$

Now we consider the following auxiliary equation:
\begin{equation}\label{eq20}
\left\{\aligned
&-div(y^{1-\alpha}\nabla w)=0,\quad &\text{in}\,\,\mathcal{C}_{\Omega},\\
&w=0,       &\text{on}\,\,\partial_{L}\mathcal{C}_{\Omega},\\
&\frac{\partial w}{\partial \upsilon^{\alpha}}=\lambda f(x)|w|^{q-2}w  &\text{on}\,\,\Omega\times\{0\},\\
\endaligned\right.
\end{equation}
In this case we use the notation $F$ and $\mathcal{M}$ respectively,
for the energy functional and the natural
constrain, namely,
\begin{equation*}\label{eq21}
F(w)=\frac{\kappa_{\alpha}}{2}\int\limits_{\mathcal{C}_{\Omega}}y^{1-\alpha}|\nabla w|^{2}dxdy-\frac{\lambda}{q}\int\limits_{\Omega\times\{0\}} f(x)|w|^{q}dx.
\end{equation*}
$$\mathcal{M}=\{w\in H^{\frac{\alpha}{2}}_{0,L}(\mathcal{C}_{\Omega})\setminus\{0\}:\,\langle F'(w)\,,\,w\rangle=0\}.$$
Setting $$m_{F}=\inf\{F(w):\,w\in \mathcal{M}\},$$
then we have the following result.
\begin{theorem}\label{t3}
For each $\lambda>0$, problem \eqref{eq20}
has a positive solution $w_{0}$ such that
$$F(w_{0})=m_{\lambda}<0.$$
\end{theorem}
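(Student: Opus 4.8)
The plan is to solve the auxiliary problem \eqref{eq20}, which is purely concave (sublinear) in the nonlinearity, by direct minimization of $F$ over the Nehari set $\mathcal{M}$, and then verify the minimizer is a positive weak solution with negative energy. First I would observe that for $w \in \mathcal{M}$ we have $\langle F'(w),w\rangle = \|w\|^2_{H^{\frac{\alpha}{2}}_{0,L}(\mathcal{C}_\Omega)} - \lambda\int_{\Omega\times\{0\}}f(x)|w|^q\,dx = 0$, so on $\mathcal{M}$,
\begin{equation*}
F(w) = \Big(\frac{1}{2}-\frac{1}{q}\Big)\|w\|^2_{H^{\frac{\alpha}{2}}_{0,L}(\mathcal{C}_\Omega)} = \frac{q-2}{2q}\|w\|^2_{H^{\frac{\alpha}{2}}_{0,L}(\mathcal{C}_\Omega)} < 0,
\end{equation*}
since $1<q<2$. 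Using the Sobolev–trace inequality (Lemma \ref{le}) one gets $\|w\|^2 \le \lambda |f|_{L^\infty} |w|_{L^q}^q \le C\lambda\|w\|^q$, hence $\|w\|^{2-q} \le C\lambda$ is bounded above on $\mathcal{M}$, which shows $F$ is bounded below on $\mathcal{M}$ and $m_F \le 0$; in fact $m_F < 0$ because $\mathcal{M}$ is nonempty (take any $w_1$ with $\int f(x)|w_1|^q\,dx > 0$, possible since $f^+\not\equiv 0$, and scale it onto $\mathcal{M}$ via $t = (\lambda\int f|w_1|^q / \|w_1\|^2)^{1/(2-q)}$).

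Next I would take a minimizing sequence $\{w_n\}\subset\mathcal{M}$ with $F(w_n)\to m_F$. By the identity $F(w_n) = \frac{q-2}{2q}\|w_n\|^2$ the sequence is bounded in $H^{\frac{\alpha}{2}}_{0,L}(\mathcal{C}_\Omega)$, so up to a subsequence $w_n\rightharpoonup w_0$ weakly, and by the compact embedding $H^{\frac{\alpha}{2}}_{0,L}(\mathcal{C}_\Omega)\hookrightarrow L^q(\Omega)$ (which holds for $q\in[2,2^*_\alpha)$, but $q<2$ here so one uses Lemma \ref{le} together with boundedness of $\Omega$ and Hölder to reduce to an exponent in that range, or invokes the compact embedding into $L^2(\Omega)$ and interpolation) we get $w_n\to w_0$ strongly in $L^q(\Omega\times\{0\})$. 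Then $\int f(x)|w_n|^q\,dx \to \int f(x)|w_0|^q\,dx$, and since $\|w_n\|^2 = \lambda\int f|w_n|^q\,dx$ stays bounded away from $0$ (otherwise $F(w_n)\to 0 \ne m_F<0$), the limit satisfies $\int f(x)|w_0|^q\,dx > 0$, in particular $w_0\not\equiv 0$. Scaling $w_0$ onto $\mathcal{M}$ by $t_0 = (\lambda\int f|w_0|^q/\|w_0\|^2)^{1/(2-q)}$ and using weak lower semicontinuity of the norm together with $F(t_0 w_0) = \frac{q-2}{2q}t_0^2\|w_0\|^2$, a short computation shows $t_0\le\liminf\big(\lambda\int f|w_n|^q / \|w_n\|^2\big)^{1/(2-q)} \cdot(\text{ratio of norms})$ and ultimately $F(t_0 w_0) \le \liminf F(w_n) = m_F$, forcing $t_0 w_0\in\mathcal{M}$ to be a minimizer; one also deduces $t_0 = 1$ and $w_n\to w_0$ strongly.

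Finally I would show the minimizer $w_0$ is a genuine weak solution of \eqref{eq20}: by a Lagrange-multiplier argument as in Lemma \ref{le5} (here $\langle\Phi'(w_0),w_0\rangle = (2-q)\|w_0\|^2 \ne 0$ where $\Phi(w)=\langle F'(w),w\rangle$, so the multiplier vanishes) we get $F'(w_0)=0$. Replacing $w_0$ by $|w_0|$ does not change $\|w_0\|$ nor $\int f|w_0|^q$, so we may assume $w_0\ge 0$; then $w_0$ satisfies $-\operatorname{div}(y^{1-\alpha}\nabla w_0)=0$ in $\mathcal{C}_\Omega$ with $\partial w_0/\partial\upsilon^\alpha = \lambda f(x)w_0^{q-1}$ on $\Omega\times\{0\}$, and the maximum principle for this degenerate elliptic operator (together with $w_0\not\equiv 0$) gives $w_0>0$ in $\mathcal{C}_\Omega$. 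Setting $m_\lambda := m_F = F(w_0) < 0$ completes the proof. The main obstacle I anticipate is the compactness step: since $q<2$ lies below the range $[2,2^*_\alpha)$ where the stated compact embedding is phrased, one must carefully route the strong $L^q$ convergence through the $L^2(\Omega)$ compact embedding (using $|\Omega|<\infty$ and Hölder to control the $L^q$ norm by the $L^2$ norm), and ensure the minimizing sequence's norm does not collapse to zero — everything else is the standard fibering/Nehari bookkeeping.
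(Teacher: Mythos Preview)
Your proposal is correct and follows essentially the same route as the paper: Nehari constraint giving $F(w)=\frac{q-2}{2q}\|w\|^2<0$, a uniform bound on $\|w\|$ via the trace inequality, compactness of a minimizing sequence, the fibering/scaling argument to recover strong convergence and $w_0\in\mathcal{M}$, and positivity via $|w_0|$ and the maximum principle. The only cosmetic differences are that the paper argues strong convergence by contradiction (assuming strict weak lower semicontinuity and producing a turning point $t_0\neq1$) rather than your direct scaling, and it does not pause over the $L^q$ compactness issue for $q<2$ that you flag.
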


\begin{proof}
We start by showing that $F$ is coercive,
bounded from below on $\mathcal{M}$ and $m_{\lambda}<0$.
Indeed, for any $w\in \mathcal{M}$, we have
\begin{equation}\label{eq22}
\kappa_{\alpha}\int\limits_{\mathcal{C}_{\Omega}}y^{1-\alpha}|w|^{2}dxdy
=\lambda\int\limits_{\Omega\times\{0\}} f(x)|w|^{q}dx\leq \lambda|f|_{L^{\infty}}\,
(\kappa_{\alpha}S(\alpha,N)^{-\frac{q}{2}})\|w\|^{q}_{H^{\frac{\alpha}{2}}_{0,L}(\mathcal{C}_{\Omega})}.
\end{equation}
This implies
$$F(w)\geq \frac{1}{2}\|w\|^{2}_{H^{\frac{\alpha}{2}}_{0,L}(\mathcal{C}_{\Omega})}
-\frac{1}{q}\lambda\,|f|_{L^{\infty}}\,\Big(\kappa_{\alpha}S(\alpha,N)\Big)
^{-\frac{q}{2}}\|w\|^{q}_{H^{\frac{\alpha}{2}}_{0,L}(\mathcal{C}_{\Omega})}, $$
and therefore, we easily derive the coerciveness for $1<q<2$.
Moreover, \eqref{eq22} implies
\begin{equation}\label{eq23}
\|w\|_{H^{\frac{\alpha}{2}}_{0,L}(\mathcal{C}_{\Omega})}\leq \Big{(}\lambda|f|_{L^{\infty}}\,(\kappa_{\alpha}S(\alpha,N))^{-\frac{q}{2}}\Big{)}^{\frac{1}{2-q}}.
\end{equation}
Hence, for all $w\in \mathcal{M}$ we have $$F(w)=(\frac{1}{2}-\frac{1}{q})\|w\|^{2}_{H^{\frac{\alpha}{2}}_{0,L}(\mathcal{C}_{\Omega})}
\geq -\frac{2-q}{2q}\Big{(}\lambda|f|_{L^s}\,(\kappa_{\alpha}S(\alpha,N))^{-\frac{q}{2}}\Big{)}^{\frac{2}{2-q}}.$$
So $F$ is bounded from below on $\mathcal{M}$
and $m_{\lambda}<0$.

Let $\{w_{n}\}_{n}$ be a minimizing sequence of $F$ on $\mathcal{M}$.
Then, by \eqref{eq23} and the compact imbedding theorem,
there exists a subsequence of $\{w_{n}\}_{n}$, still denoted by $\{w_{n}\}_{n}$, and $w_{0}$ such that
\begin{equation}\label{eq24}\aligned
&w_{n}\rightharpoonup w_{0} \,\,\text{weakly in}\,\,H^{\frac{\alpha}{2}}_{0,L}(\mathcal{C}_{\Omega});\\
&w_{n}(\cdot,0)\to w_{0}(\cdot,0) \,\,\text{strongly in}\,\,L^{p}(\Omega)\,\text{for}\,\, 1<p<2^*_{\alpha};\\
&w_{n}(\cdot,0)\to w_{0}(\cdot,0)\,\,\text{a.e.\,in}\,\Omega.
\endaligned
\end{equation}
Now, we claim that $\int\limits_{\Omega\times\{0\}} f(x)|w_{0}|^{q}dx>0$.
If not, by \eqref{eq24} we obtain
$$\int\limits_{\Omega\times\{0\}}  f(x)|w_{0}|^{q}dx=0$$
and
$$\int\limits_{\Omega\times\{0\}}  f(x)|w_{n}|^{q}dx\to 0\,\,\text{as}\,\,n\to \infty.$$
Hence $\int\limits_{\mathcal{C}_{\Omega}}y^{1-\alpha} |\nabla w_{n}|^{2}dxdy\to 0$
and $F(w_{n})\to 0$ as $n\to \infty$ which is
contradicts $F(w_{n})\to m_{\lambda}<0$ as $n\to \infty$.
Therefore, we have $\int\limits_{\Omega\times\{0\}}  f(x)|w_{0}|^{q}dx>0$.
In particular $w_{0}\not\equiv 0$.

Next, we prove $w_{n}\to w_{0}$ ($n\to\infty$) strongly in $H^{\frac{\alpha}{2}}_{0,L}(\mathcal{C}_{\Omega})$.
Let us suppose on the contrary that
$$\|w_{0}\|_{H^{\frac{\alpha}{2}}_{0,L}(\mathcal{C}_{\Omega})}<\liminf\limits_{n\to \infty}\|w_{n}\|_{H^{\frac{\alpha}{2}}_{0,L}(\mathcal{C}_{\Omega})}\quad \text{as}\,\,n\to \infty$$
and
$$\int\limits_{\Omega\times\{0\}} f(x)|w_{n}|^{q}dx\to \int\limits_{\Omega\times\{0\}} f(x)|w_{0}|^{q}dx\quad \text{as}\,\, n\to \infty.$$ So
\begin{equation}\label{eq1*}
\|w_{0}\|^{2}_{H^{\frac{\alpha}{2}}_{0,L}(\mathcal{C}_{\Omega})}-\lambda\int\limits_{\Omega\times\{0\}} f(x)|w_{0}|^{q}dx<\liminf\limits_{n\to\infty}\Big(\|w_{n}\|^{2}_{H^{\frac{\alpha}{2}}_{0,L}(\mathcal{C}_{\Omega})}
-\lambda\int\limits_{\Omega\times\{0\}} f(x)|w_{n}|^{q}dx\Big)=0.
\end{equation}
From $\int\limits_{\Omega\times\{0\}} f(x)|w_{0}|^{q}dx>0$
and \eqref{eq1*}, we known that the function
$$F(t\,w_{0})
=\frac{t^2}{2}\kappa_{\alpha}\int\limits_{\mathcal{C}_{\Omega}}y^{1-\alpha}|\nabla v_{0}|^{2}dxdy
-\frac{\lambda\, t^{q}}{q}\int\limits_{\Omega\times\{0\}} f(x)|w_{0}|^{q}dx$$
is initially decreasing and eventually
increasing on $t$ with a single turning point $t_{0}\neq1$
such that $t_{0}w_{0}\in \mathcal{M}$.
Then  from $t_{0}w_{n}\rightharpoonup t_{0}w_{0}$ and \eqref{eq1*} we get that
$$F(t_{0}w_{0})<F(w_{0})<\liminf\limits_{n\to \infty}F(w_{n})=m_{\lambda}$$
which is a contradiction. Hence $w_{n}\to w_{0}$
strongly in $H^{\frac{\alpha}{2}}_{0,L}(\mathcal{C}_{\Omega})$.
This implies $w_{0}\in \mathcal{M}$ and $F(w_{0})=m_{\lambda}$.
Moreover, it  follows from $F(w_{0})=F(|w_{0}|)$
and $|w_{0}|\in \mathcal{M}$ that $w_{0}$ is a
nonnegative weak solution to \eqref{eq20}.
Then by the strong maximum
principle \cite{ls2007} we have $w_{0}>0$ in $\mathcal{C}_{\Omega}$,
that is, $w_{0}$ is a positive solution of problem \eqref{eq20}.
\end{proof}

Now, we establish the existence of a  minimum for $J$ on $\mathcal{N}^{+}$.
\begin{proposition}\label{p3}
For  each $\lambda\in (0\,,\,\lambda_{*})$, the functional $J$
has a minimizer $w_{1}$ in $\mathcal{N}$.
\end{proposition}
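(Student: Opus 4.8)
\textbf{Proof plan for Proposition \ref{p3}.}
The plan is to find the minimizer of $J$ on $\mathcal{N}^{+}$ by the direct method, exploiting the relation between $\mathcal{N}^{+}$ and the auxiliary problem \eqref{eq20} established in Theorem \ref{t3}. First I would show that $m^{+}=\inf\{J(w):w\in\mathcal{N}^{+}\}<0$. To this end, take the positive solution $w_{0}$ of \eqref{eq20} provided by Theorem \ref{t3}; since $\int_{\Omega\times\{0\}}f(x)|w_{0}|^{q}\,dx>0$, Lemma \ref{le0} (its last assertion, together with parts (1)--(2)) gives $t^{+}(w_{0})>0$ and $t^{+}(w_{0})w_{0}\in\mathcal{N}^{+}$. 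A direct computation of $J(tw_{0})$ for small $t>0$, using that the term $-\frac{\lambda}{q}\int f(x)|w_{0}|^{q}$ is of order $t^{q}$ with $q<2<2^{*}_{\alpha}$, shows $J(tw_{0})<0$ for $t$ small; combined with part (4) of Lemma \ref{le0} ($J(t^{+}w_{0})=\min_{t\in[0,t^{-}]}J(tw_{0})$) this yields $J(t^{+}(w_{0})w_{0})<0$, hence $m^{+}<0$.

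Next I would take a minimizing sequence $\{w_{n}\}\subset\mathcal{N}^{+}$ for $m^{+}$. By Lemma \ref{le6}, $J$ is coercive on $\mathcal{N}\supset\mathcal{N}^{+}$, so $\{w_{n}\}$ is bounded in $H^{\frac{\alpha}{2}}_{0,L}(\mathcal{C}_{\Omega})$. Passing to a subsequence, $w_{n}\rightharpoonup w_{1}$ weakly in $H^{\frac{\alpha}{2}}_{0,L}(\mathcal{C}_{\Omega})$, $w_{n}(\cdot,0)\to w_{1}(\cdot,0)$ strongly in $L^{p}(\Omega)$ for $1<p<2^{*}_{\alpha}$ (by the compact embedding in part (ii) of the properties of $E_{\alpha}$), and a.e.\ in $\Omega$. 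Since $1<q<2^{*}_{\alpha}$, the term $\int_{\Omega\times\{0\}}f(x)|w_{n}|^{q}\,dx$ converges to $\int_{\Omega\times\{0\}}f(x)|w_{1}|^{q}\,dx$. I claim this limit is strictly positive: if it were $\le 0$, then along the Nehari identity and the decomposition $\langle J'(w_n),w_n\rangle=0$ one estimates $J(w_{n})=\bigl(\tfrac12-\tfrac1{2^{*}_{\alpha}}\bigr)\kappa_\alpha\int y^{1-\alpha}|\nabla w_n|^2\,dxdy-\lambda\bigl(\tfrac1q-\tfrac1{2^{*}_{\alpha}}\bigr)\int f(x)|w_n|^q\,dx\ge 0$ in the limit (using $\int f(x)|w_n|^q\,dx\to\int f(x)|w_1|^q\,dx\le 0$ and the first term $\ge 0$), contradicting $m^{+}<0$. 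Hence $\int_{\Omega\times\{0\}}f(x)|w_{1}|^{q}\,dx>0$, so in particular $w_{1}\not\equiv 0$, and by Lemma \ref{le0} there is a unique $t^{+}(w_{1})>0$ with $t^{+}(w_{1})w_{1}\in\mathcal{N}^{+}$.

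The main obstacle is to upgrade the weak convergence to strong convergence, i.e.\ to show $t^{+}(w_{1})=1$ and $w_{n}\to w_{1}$ strongly. I would argue by contradiction: suppose $\|w_{1}\|_{H^{\frac{\alpha}{2}}_{0,L}(\mathcal{C}_{\Omega})}<\liminf_{n\to\infty}\|w_{n}\|_{H^{\frac{\alpha}{2}}_{0,L}(\mathcal{C}_{\Omega})}$. Using weak lower semicontinuity of the Dirichlet-type norm together with $\int f(x)|w_n|^q\,dx\to\int f(x)|w_1|^q\,dx$ and $\int|w_n|^{2^{*}_{\alpha}}\,dx\to\int|w_1|^{2^{*}_{\alpha}}\,dx$ (from strong $L^{p}$ convergence for $p<2^{*}_{\alpha}$; here one must also pass to a further subsequence, or note that since $w_n\in\mathcal N$ the identity $\int|w_n|^{2^{*}_{\alpha}}=\kappa_\alpha\int y^{1-\alpha}|\nabla w_n|^2-\lambda\int f|w_n|^q$ controls the critical term), one shows that the map $t\mapsto J(tw_{1})$ attains its minimum over $[0,t^{-}(w_{1})]$ at some $t^{+}(w_{1})$ with
\begin{equation*}
J(t^{+}(w_{1})w_{1})<\liminf_{n\to\infty}J(t^{+}(w_{1})w_{n})\le\liminf_{n\to\infty}J(w_{n})=m^{+},
\end{equation*}
where the last inequality uses part (4) of Lemma \ref{le0} applied to each $w_{n}\in\mathcal{N}^{+}$ (so $J(w_{n})=\min_{t\in[0,t^{-}(w_n)]}J(tw_{n})\ge J(t^{+}(w_{1})w_{n})$ once $t^+(w_1)$ lies in the admissible range, which one checks from the convergence $t^{\pm}(w_n)\to t^{\pm}(w_1)$). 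This contradicts the definition of $m^{+}$, forcing $w_{n}\to w_{1}$ strongly and $t^{+}(w_{1})=1$, i.e.\ $w_{1}\in\mathcal{N}^{+}$ and $J(w_{1})=m^{+}$. Finally, replacing $w_{1}$ by $|w_{1}|$ (which leaves $J$ and membership in $\mathcal{N}^{+}$ unchanged) and invoking Lemma \ref{le5}, $w_{1}$ is a nonnegative critical point of $J$, hence by the strong maximum principle \cite{ls2007} a positive solution of \eqref{eq02}; in particular $w_{1}$ is a minimizer of $J$ in $\mathcal{N}$, since $m^{+}<0\le\inf_{\mathcal{N}^{-}}J$ will be shown separately (or is not needed for this statement, the claim only asserting a minimizer in $\mathcal N$, which $w_1$ is because $\mathcal N^0=\{0\}$ by Lemma \ref{le3} and $J(w_1)=m^+\le m^-$).
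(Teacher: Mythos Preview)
Your overall strategy --- show $m^{+}<0$, take a minimizing sequence on $\mathcal{N}^{+}$, extract a weak limit with $\int f|w_1|^q>0$, then upgrade to strong convergence by a fibering contradiction --- is natural, but the last step does not go through as written, and this is precisely where the paper's proof differs in an essential way.

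The contradiction chain $J(t^{+}(w_1)w_1)<\liminf J(t^{+}(w_1)w_n)\le\liminf J(w_n)=m^{+}$ fails at both inequalities. For the first, at a fixed $t$ you have $\|w_1\|^2<\liminf\|w_n\|^2$ (good) but also, by Fatou, $\int|w_1|^{2^*_\alpha}\le\liminf\int|w_n|^{2^*_\alpha}$, and since the critical term enters $J$ with a \emph{negative} sign this goes the wrong way; the two effects can cancel and you cannot conclude $J(tw_1)<\liminf J(tw_n)$. Your parenthetical remark about using the Nehari identity to ``control the critical term'' does not help: if $\|w_n\|^2$ fails to converge then so does $\int|w_n|^{2^*_\alpha}$, since they differ by the convergent subcritical term. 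For the second inequality you wrote $J(w_n)=\min_{t\in[0,t^{-}(w_n)]}J(tw_n)\ge J(t^{+}(w_1)w_n)$, but a minimum is $\le$, not $\ge$, so the inequality points the wrong direction. There is no simple repair here: the lack of compactness at the critical exponent genuinely obstructs the direct method on $\mathcal{N}^{+}$.

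The paper circumvents this by applying Ekeland's variational principle on $\mathcal{N}$ to the minimizing sequence, and then uses Lemma~\ref{le7} (the implicit-function parametrization $t(v)(w_n-v)\in\mathcal{N}$) together with the Ekeland ``almost minimality'' condition to prove $\|J'(w_n)\|\to 0$. Once $\{w_n\}$ is a Palais--Smale sequence, the weak limit $w_1$ is a critical point of $J$, hence lies in $\mathcal{N}$, and on $\mathcal{N}$ one has the representation $J(w)=\frac{\alpha}{2N}\|w\|^2-\lambda\frac{2^*_\alpha-q}{q\,2^*_\alpha}\int f|w|^q$, in which the critical term has been eliminated. Weak lower semicontinuity of the norm and compactness of the subcritical term then give $J(w_1)\le\liminf J(w_n)=m_J\le J(w_1)$, forcing $\|w_n\|\to\|w_1\|$ and hence strong convergence. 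This Ekeland step is the missing ingredient in your plan.
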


\begin{proof}
From Lemma \ref{le6}, it is easily derived the coerciveness and
the lower boundedness of $J$ on $\mathcal{N}$.
Clearly, by the Ekeland's variational principle applying for the
minimization problem $\inf\limits_{\mathcal{N}}J(w)$,
there exists a minimizing sequence
$\{w_{n}\}\subset\mathcal{N}$ such that
\begin{equation}\label{eq29}
J(w_{n})<m_{J}+\frac{1}{n},
\end{equation}
and
\begin{equation}\label{eq30}
J(Z)\geq J(w_{n})-\frac{1}{n}\|w_{n}-Z\|_{H^{\frac{\alpha}{2}}_{0,L}(\mathcal{C}_{\Omega})},
\quad \forall \,Z\in \mathcal{N}.
\end{equation}

Let $w_{0}$ be a positive solution of \eqref{eq20}
 satisfying $F(w_{0})=m_{F}<0$.
Then
\begin{equation*}
 \aligned
m_{\lambda}&=F(w_{0})=\frac{\kappa_{\alpha}}{2}\int\limits_{\mathcal{C}_{\Omega}}y^{1-\alpha}|w_{0}|^{2}dxdy-
\frac{\lambda}{q}\int\limits_{\Omega\times\{0\}} f(x)|w_{0}|^{q}dx\\
 &=(\frac{1}{2}-\frac{1}{q})\|w_{0}\|^{2}_{H^{\frac{\alpha}{2}}_{0,L}(\mathcal{C}_{\Omega})},
  \endaligned
 \end{equation*}
that is,
\begin{equation}\label{eq25}
\|w_{0}\|^{2}_{H^{\frac{\alpha}{2}}_{0,L}(\mathcal{C}_{\Omega})}=\frac{2q}{q-2}m_{\lambda}>0.
\end{equation}
By Lemma 2.5 in \cite{wu2008}, for $w_{0}$,
there exists a positive constant $t_{1}$
such that $t_{1}w_{0}\in \mathcal{N}^{+}$, i.e.,
\begin{equation}\label{eq25*}
\int\limits_{\Omega\times\{0\}} |t_{1}w_{0}|^{2^*_{\alpha}}dx
<\frac{2-q}{2^*_{\alpha}-q}\,\kappa_{\alpha}
\int\limits_{\mathcal{C}_{\Omega}}y^{1-\alpha}|\nabla(t_{1}w_{0})|^{2}dxdy.
 \end{equation}
Then, from \eqref{eq25} and \eqref{eq25*},
\begin{equation*}
\aligned
J(t_{1}\,w_{0})
&=\frac{q-2}{2q}\,\kappa_{\alpha}\int\limits_{\mathcal{C}_{\Omega}}y^{1-\alpha}|\nabla(t_{1}w_{0})|^{2}dxdy
+\frac{2^*_{\alpha}-q}{q\,2^*_{\alpha}}\int\limits_{\Omega\times\{0\}} |t_{1}w_{0}|^{2^*_{\alpha}}dx\\
&<\Big{(}\frac{1}{2}-\frac{1}{2^*_{\alpha}}\Big{)}\frac{q-2}{q}\|t_{1}w_{0}\|^{2}
=\frac{\alpha}{2N}\frac{q-2}{q}t^{2}_{1}\,\frac{2q}{q-2}m_{\lambda}\\
&=\frac{\alpha}{N}t^{2}_{1}\,m_{\lambda}<0.
\endaligned\end{equation*}
This yields
\begin{equation}\label{eq26}
m_{J}\leq m^{+}<0.
\end{equation}\label{eq26*}
So \eqref{eq29}, \eqref{eq26} and the coerciveness of $J$
imply that the minimizer sequence $\{w_{n}\}$ is bounded,
and so there exists a subsequence of $\{w_{n}\}$, still denoted by $\{w_{n}\}$, and $w_{1}$ such that
\begin{equation*}\aligned
&w_{n}\rightharpoonup w_{1}\,\,\text{weakly in }\,\,H^{\frac{\alpha}{2}}_{0,L}(\mathcal{C}_{\Omega});\\
&w_{n}(\cdot,0)\to w_{1}(\cdot,0)\,\,\text{strongly in }\,\,L^{p}(\Omega)\,\text{for}\,\, 1\leq p< 2^*_{\alpha};\\
&w_{n}(\cdot,0)\to  w_{1}(\cdot,0)\,\,\text{a.e. in }\,\,\Omega.\\
\endaligned\end{equation*}
Now, we claim that $w_{1}\not\equiv0$.
In fact, suppose on the contrary that $w_{1}\equiv0$.
Since $w_{n}\in \mathcal{N}$, we deduce
\begin{equation*}\label{eq2*}
\aligned
J(w_{n})
&=\frac{\kappa_{\alpha}}{2}\int\limits_{\mathcal{C}_{\Omega}}y^{1-\alpha}|\nabla w_{n}|^{2}dxdy
-\frac{\lambda}{q}\int\limits_{\Omega\times\{0\}} f(x)|w_{n}|^{q}dx-\frac{1}{2^*_{\alpha}}\int\limits_{\Omega\times\{0\}} |w_{n}|^{2^*_{\alpha}}dx\\
&=\frac{2\alpha}{N}\|w_{n}\|^{2}_{H^{\frac{\alpha}{2}}_{0,L}(\mathcal{C}_{\Omega})}
-\lambda\,\frac{2^*_{\alpha}-q}{2^*_{\alpha}q}\int\limits_{\Omega\times\{0\}} f(x)|w_{n}|^{q}dx\\
&>-\lambda\,\frac{2^*_{\alpha}-q}{q2^*_{\alpha}}\int\limits_{\Omega\times\{0\}} f(x)|w_{n}|^{q}dx,\\
\endaligned\end{equation*}
that is,
\begin{equation*}\label{eq4*}
\int\limits_{\Omega\times\{0\}} f(x)|w_{n}|^{q}dx>
-\frac{q2^*_{\alpha}}{\lambda(2^*_{\alpha}-q)}J(w_{n})
\geq -\frac{q2^*_{\alpha}}{\lambda(2^*_{\alpha}-q)}m_{J}>0,
\end{equation*}
which clearly shows that $w_{1}\not\equiv0$.

Next, we will show that $$\|J'(w_{n})\|_{H^{\frac{\alpha}{2}}_{0,L}(\mathcal{C}_{\Omega})}\to 0\,\,\text{as}\,\,n\to \infty.$$
Exactly the same as in Lemma \ref{le7} we may apply suitable function
$t_{n}:\,B_{\varepsilon}(0)\to\mathbb{R}^{+}$ for some $\varepsilon>0$ small
such that
$$t_{n}(v)(w_{n}-v)\in \mathcal{N},\,\,\forall\,v\in H^{\frac{\alpha}{2}}_{0,L}(\mathcal{C}_{\Omega}),
\,\,\|v\|_{H^{\frac{\alpha}{2}}_{0,L}(\mathcal{C}_{\Omega})}<\varepsilon.$$

Set $\eta_{n}=t_{n}(v)(w_{n}-v)$. Since $\eta_{n}\in\mathcal{N}$,
 we deduce from \eqref{eq30} that
\begin{equation*}\label{eq34}
J(\eta_{n})-J(w_{n})\geq-\frac{1}{n}\|\eta_{n}-w_{n}\|_{H^{\frac{\alpha}{2}}_{0,L}(\mathcal{C}_{\Omega})}.
\end{equation*}
By the mean value theorem, we have
\begin{equation}\label{eq35}
\langle J'(w_{n})\,,\,\eta_{n}-w_{n}\rangle\geq -\frac{1}{n}\|\eta_{n}-w_{n}\|_{H^{\frac{\alpha}{2}}_{0,L}(\mathcal{C}_{\Omega})}
+o(\|\eta_{n}-w_{n}\|_{H^{\frac{\alpha}{2}}_{0,L}(\mathcal{C}_{\Omega})}).
\end{equation}
Thus, from $\eta_{n}-w_{n}=(t_{n}(v)-1)(w_{n}-v)-v$ and \eqref{eq35},
we get
\begin{equation}\label{eq36}\aligned
\langle J'(w_{n})\,,\,-v\rangle&+(t_{n}(v)-1)\langle J'(w_{n})\,,\,w_{n}-v\rangle\\
&\geq -\frac{1}{n}\|\eta_{n}-w_{n}\|_{H^{\frac{\alpha}{2}}_{0,L}(\mathcal{C}_{\Omega})}
+o(\|\eta_{n}-w_{n}\|_{H^{\frac{\alpha}{2}}_{0,L}(\mathcal{C}_{\Omega})}).\\
\endaligned\end{equation}

Let $v=\frac{r \,w_{1}}{\|w_{1}\|_{H^{\frac{\alpha}{2}}_{0,L}(\mathcal{C}_{\Omega})}}$,
$0<r<\varepsilon$. Substituting into \eqref{eq36}, we have
\begin{equation}\label{eq38}
\aligned
\langle J'(w_{n})\,,\,\frac{w_{1}}{\|w_{1}\|_{H^{\frac{\alpha}{2}}_{0,L}(\mathcal{C}_{\Omega})}}\rangle
&\leq \frac{1}{n\,r}\,\|\eta_{n}-w_{n}\|_{H^{\frac{\alpha}{2}}_{0,L}(\mathcal{C}_{\Omega})}
+\frac{1}{r}\,o(\|\eta_{n}-w_{n}\|_{H^{\frac{\alpha}{2}}_{0,L}(\mathcal{C}_{\Omega})})\\
&+\frac{(t_{n}(v)-1)}{r}\langle J'(w_{n})-J'(\eta_{n})\,,\,w_{n}-v\rangle.\\
\endaligned
\end{equation}
Since
\begin{equation}\label{eq38*}
\aligned
\|\eta_{n}-w_{n}\|_{H^{\frac{\alpha}{2}}_{0,L}(\mathcal{C}_{\Omega})}
&=\|(t_{n}(v)-1)\,w_{n}-t_{n}(v)\,v\|_{H^{\frac{\alpha}{2}}_{0,L}(\mathcal{C}_{\Omega})}\\
&\leq \varepsilon|t_{n}(v)|+|t_{n}(v)-1|\|w_{n}\|_{H^{\frac{\alpha}{2}}_{0,L}(\mathcal{C}_{\Omega})}
\endaligned\end{equation}
and
\begin{equation}\label{eq38**}
\lim\limits_{r\to 0}\frac{|t_{n}(v)-1|}{r}=\lim\limits_{r\to 0}\frac{|\langle t'_{n}(0)\,,\,v\rangle|}{r}\leq \|t'_{n}(0)\|_{H^{\frac{\alpha}{2}}_{0,L}(\mathcal{C}_{\Omega})}.
\end{equation}
If we let $r\to 0$ in the right hand of \eqref{eq38} for a fixed $n$,
then by \eqref{eq38*}, \eqref{eq38**} and the boundedness of $w_{n}$,
we can find a constant $C>0$ such that
\begin{equation}\label{eq39}
\langle J'(w_{n})\,,\,\frac{w_{1}}{\|w_{1}\|_{H^{\frac{\alpha}{2}}_{0,L}(\mathcal{C}_{\Omega})}}\rangle
\leq \frac{C}{n}\Big{(}1+\|t'_{n}(0)\|_{H^{\frac{\alpha}{2}}_{0,L}(\mathcal{C}_{\Omega})}\Big{)}.
\end{equation}

We are done once we show that $\|t'_{n}(0)\|_{H^{\frac{\alpha}{2}}_{0,L}(\mathcal{C}_{\Omega})}$
is uniformly bounded in $n$.
Since
$$\langle t'_{n}(0)\,,\,\varphi\rangle=\frac{2\kappa_{\alpha}\int\limits_{\mathcal{C}_{\Omega}}y^{1-\alpha} \nabla v_{n}\nabla \varphi dxdy
-q\lambda\int\limits_{\Omega\times\{0\}} f(x)|w_{n}|^{q-2}w_{n}\varphi dx-2^*_{\alpha}\int\limits_{\Omega\times\{0\}} |w|^{2^*_{\alpha}-2}w_{n}\varphi dx}
{(2-q) \| w_{n}|^{2}_{H^{\frac{\alpha}{2}}_{0,L}(\mathcal{C}_{\Omega})}
-(2^*_{\alpha}-q)\int\limits_{\Omega\times\{0\}} |w_{n}|^{2^*_{\alpha}}dx},$$
we have by the boundness of $w_{n}$,
\begin{equation}\label{eq40}
\| t'_{n}(0)\|_{H^{\frac{\alpha}{2}}_{0,L}(\mathcal{C}_{\Omega})}
\leq \frac{C_{1}}{|(2-q)\|w_{n}\|^{2}
-(2^*_{\alpha}-q)\int\limits_{\Omega\times\{0\}} |w_{n}|^{2^*_{\alpha}}dx|},
\end{equation}
for some  suitable positive constant $C_{1}$. We next only need to show that
\begin{equation}\label{eq40*}
\Big{|}(2-q)\|w_{n}\|^{2}_{H^{\frac{\alpha}{2}}_{0,L}(\mathcal{C}_{\Omega})}
-(2^*_{\alpha}-q)\int\limits_{\Omega\times\{0\}} |w_{n}|^{2^*_{\alpha}}dx\Big{|}\geq c>0
\end{equation}
for some $c>0$ and $n$ large enough.
Arguing by contradiction, assume that there exists a subsequence
 $\{w_{n}\}$ such that
 \begin{equation}\label{eq42}
 (2-q)\|w_{n}\|^{2}_{H^{\frac{\alpha}{2}}_{0,L}(\mathcal{C}_{\Omega})}
 -(2^*_{\alpha}-q)\int\limits_{\Omega\times\{0\}}\ |w_{n}|^{2^*_{\alpha}}dx\to 0\,\,\text{as}\,\,n\to \infty.
 \end{equation}
 Then,
\begin{equation}\label{eq43}
\aligned
\lim\limits_{n\to \infty}\int\limits_{\Omega\times\{0\}} |w_{n}|^{2^*_{\alpha}}dx
&=\lim\limits_{n\to \infty}\frac{2-q}{2^*_{\alpha}-q}\|w_{n}\|^{2}_{H^{\frac{\alpha}{2}}_{0,L}(\mathcal{C}_{\Omega})}\\
&\geq \frac{2-q}{2^*_{\alpha}-q}\|w_{1}\|^{2}_{H^{\frac{\alpha}{2}}_{0,L}(\mathcal{C}_{\Omega})}>0.
\endaligned\end{equation}
Therefore, we can find a constant $C_{2}>0$ such that
\begin{equation}\label{eq43}
\int\limits_{\Omega\times\{0\}} |w_{n}|^{2^*_{\alpha}}dx>C_{2}
\end{equation}
for $n$ large enough. In addition, \eqref{eq42} and the fact
that $w_{n}\in \mathcal{N}^{+}$ also give as
\begin{equation*}\label{eq44}
\lambda\int\limits_{\Omega\times\{0\}} f(x)|w_{n}|^{q}dx=\|u_{n}\|^{2}_{H^{\frac{\alpha}{2}}_{0,L}(\mathcal{C}_{\Omega})}
-\int\limits_{\Omega\times\{0\}} |w_{n}|^{2^*_{\alpha}}dx
=\frac{2\alpha}{(N-\alpha)(2-q)}\int\limits_{\Omega\times\{0\}} |w_{n}|^{2^*_{\alpha}}dx+o(1)
\end{equation*}
and
\begin{equation}\label{eq45}
\|w_{n}\|_{H^{\frac{\alpha}{2}}_{0,L}(\mathcal{C}_{\Omega})}\leq \Big{[}\lambda\frac{(2^*_{\alpha}-q)(N-\alpha)}{2\,\alpha}|f|_{L^{\infty}}\,S^{-\frac{N}{N-\alpha}}\Big{]}^{\frac{1}{2-q}}+o(1).
\end{equation}
This implies $K(w_{n})=o(1)$, where $K$ is given in Section 2.

However, by \eqref{eq43}, \eqref{eq45},
similar to the calculation of \eqref{eq00},
for each  $\lambda\in(0\,,\,\lambda_{*})$,
there is a $C_{3}>0$ such that
\begin{equation*}
K(w_{n})>C_{3},
\end{equation*}
which is impossible.

Hence, from \eqref{eq39}, \eqref{eq40} and \eqref{eq40*},
$$\langle J'(w_{n})\,,\,\frac{w_{1}}{\|w_{1}\|_{H^{\frac{\alpha}{2}}_{0,L}(\mathcal{C}_{\Omega})}}\rangle \leq \frac{C}{n}$$
for some  $C>0$. Taking $n\to \infty$, we get $\|J'(w_{n})\|_{H^{\frac{\alpha}{2}}_{0,L}(\mathcal{C}_{\Omega})}\to 0$.
This shows that $\{w_{n}\}$ is a (PS) sequence of functional $J$.

Finally, we prove that $w_{n}\to w_{1}$
strongly in $H^{\frac{\alpha}{2}}_{0,L}(\mathcal{C}_{\Omega})$.
 Since $w_{n}\rightharpoonup w_{1}$
weakly in $H^{\frac{\alpha}{2}}_{0,L}(\mathcal{C}_{\Omega})$,
it follows that
\begin{equation*}\aligned
m_{J}\leq J(w_{1})
&=\frac{1}{2}\|w_{1}\|^{2}_{H^{\frac{\alpha}{2}}_{0,L}(\mathcal{C}_{\Omega})}
-\frac{\lambda}{q}\int\limits_{\Omega\times\{0\}} f(x)|w_{1}|^{q}dx
-\frac{1}{2^*_{\alpha}}\int\limits_{\Omega\times\{0\}} |w_{1}|^{2^*_{\alpha}}dx\\
&=\frac{\alpha}{2N}\|w_{1}\|^{2}_{H^{\frac{\alpha}{2}}_{0,L}(\mathcal{C}_{\Omega})}
-\lambda\,\frac{2^*_{\alpha}-q}{q2^*_{\alpha}}\int\limits_{\Omega\times\{0\}} f(x)|w_{1}|^{q}dx\\
&\leq \lim\limits_{n\to \infty} J(w_{n})=m_{J}.
\endaligned
\end{equation*}
Consequently, $w_{n}\to w_{1}$ strongly in
$H^{\frac{\alpha}{2}}_{0}(\Omega)$ and $J(w_{1})=m_{J}$.
The proof is completed.
\end{proof}

\begin{theorem}\label{th01-1}
For each  $\lambda\in (0\,,\,\lambda_{*})$,
the problem \eqref{eq01}
admits a positive solution in $\mathcal{N}^{+}$.
\end{theorem}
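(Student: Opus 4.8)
The plan is to derive the positive solution from the minimizer furnished by Proposition \ref{p3}, after checking that this minimizer may be chosen in $\mathcal{N}^{+}$, that it is a critical point of $J$ via Lemma \ref{le5}, and that it is positive; the solution of \eqref{eq01} is then obtained as its trace on $\Omega\times\{0\}$.

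First I would take the minimizer $w_{1}\in\mathcal{N}$ of $J$ given by Proposition \ref{p3}, so that $J(w_{1})=m_{J}$, and recall from \eqref{eq26} that $m_{J}\leq m^{+}<0$; in particular $w_{1}\not\equiv0$. Since $\lambda\in(0,\lambda_{*})\subset(0,\lambda_{2})$, Lemma \ref{le3} gives $\mathcal{N}^{0}=\{0\}$, so $w_{1}\in\mathcal{N}^{+}$ or $w_{1}\in\mathcal{N}^{-}$. Using the identity $J(w)=\frac{\alpha}{2N}\|w\|^{2}_{H^{\frac{\alpha}{2}}_{0,L}(\mathcal{C}_{\Omega})}-\lambda\frac{2^{*}_{\alpha}-q}{q\,2^{*}_{\alpha}}\int_{\Omega\times\{0\}}f(x)|w|^{q}\,dx$, which holds on $\mathcal{N}$ (cf.\ the computation in Lemma \ref{le6}), the inequality $J(w_{1})=m_{J}<0$ forces $\int_{\Omega\times\{0\}}f(x)|w_{1}|^{q}\,dx>0$. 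Hence, as $\lambda<\lambda_{1}$, Lemma \ref{le0} provides $0<t^{+}(w_{1})<t^{-}(w_{1})$ with $t^{\pm}(w_{1})w_{1}\in\mathcal{N}^{\pm}$; should $w_{1}$ lie in $\mathcal{N}^{-}$, the uniqueness in Lemma \ref{le0} of the point of $\mathcal{N}^{-}$ on the ray through $w_{1}$ forces $t^{-}(w_{1})=1$, and then Lemma \ref{le0}(4) yields $J(t^{+}(w_{1})w_{1})=\min_{t\in[0,1]}J(tw_{1})\leq J(w_{1})=m_{J}$, so that $t^{+}(w_{1})w_{1}\in\mathcal{N}^{+}$ is again a minimizer of $J$ on $\mathcal{N}$. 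Replacing $w_{1}$ by $t^{+}(w_{1})w_{1}$ if necessary, I may therefore assume $w_{1}\in\mathcal{N}^{+}$ and $J(w_{1})=m_{J}=m^{+}$.

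Next I would note that $w\mapsto\langle\Psi'(w),w\rangle$ is continuous on $H^{\frac{\alpha}{2}}_{0,L}(\mathcal{C}_{\Omega})$, so $\mathcal{N}^{+}$ is relatively open in $\mathcal{N}$; consequently $w_{1}$, being a global minimizer of $J$ on $\mathcal{N}$ lying in $\mathcal{N}^{+}$, is a \emph{local} minimizer of $J$ on $\mathcal{N}$. Since $\bigl|\nabla|w_{1}|\bigr|=|\nabla w_{1}|$ almost everywhere in $\mathcal{C}_{\Omega}$, one has $\|\,|w_{1}|\,\|_{H^{\frac{\alpha}{2}}_{0,L}(\mathcal{C}_{\Omega})}=\|w_{1}\|_{H^{\frac{\alpha}{2}}_{0,L}(\mathcal{C}_{\Omega})}$ and $\langle\Psi'(|w_{1}|),|w_{1}|\rangle=\langle\Psi'(w_{1}),w_{1}\rangle$, whence $|w_{1}|\in\mathcal{N}^{+}$ with $J(|w_{1}|)=J(w_{1})=m^{+}$; thus $|w_{1}|$ is again a local minimizer and is nonnegative. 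Applying Lemma \ref{le5} (valid because $\lambda<\lambda_{2}$) to $|w_{1}|$ gives $J'(|w_{1}|)=0$, i.e.\ $|w_{1}|$ is a nontrivial nonnegative weak solution of \eqref{eq02}; the strong maximum principle \cite{ls2007} then upgrades this to $|w_{1}|>0$ in $\mathcal{C}_{\Omega}$. Renaming this function $w_{1}$, we have a positive energy solution of \eqref{eq02} with $J(w_{1})=m^{+}<0$ belonging to $\mathcal{N}^{+}$.

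Finally, by the equivalence between \eqref{eq01} and \eqref{eq02} recalled in Section 2, the trace $u_{1}:=\mathrm{tr}_{\Omega}\,w_{1}=w_{1}(\cdot,0)\in H^{\frac{\alpha}{2}}_{0}(\Omega)$ is a positive energy solution of \eqref{eq01}, and since its $\alpha$-harmonic extension $w_{1}=E_{\alpha}(u_{1})$ satisfies $\langle\Psi'(w_{1}),w_{1}\rangle>0$, the solution $u_{1}$ is of the type sought in $\mathcal{N}^{+}$. The step I expect to be the main obstacle is the reduction to $w_{1}\in\mathcal{N}^{+}$ together with the verification that the resulting minimizer is genuinely a local minimizer of $J$ on all of $\mathcal{N}$, so that Lemma \ref{le5} applies: this is precisely where the strict sign $m^{+}<0$, the positivity $\int_{\Omega\times\{0\}}f(x)|w_{1}|^{q}\,dx>0$, and the fibering structure of Lemma \ref{le0} are indispensable.
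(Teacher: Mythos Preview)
Your argument is correct and follows essentially the same route as the paper: obtain the global minimizer $w_{1}$ of $J$ on $\mathcal{N}$ from Proposition~\ref{p3}, use the fibering structure of Lemma~\ref{le0} to force $w_{1}\in\mathcal{N}^{+}$, pass to $|w_{1}|$, and invoke the strong maximum principle. The only cosmetic differences are that the paper reads off criticality of $w_{1}$ directly from the $(PS)$ argument inside the proof of Proposition~\ref{p3} (rather than via Lemma~\ref{le5} as you do), and handles the case $w_{1}\in\mathcal{N}^{-}$ by a strict-inequality contradiction $J(t^{+}(w_{1})w_{1})<m_{J}$ instead of your replacement $w_{1}\leadsto t^{+}(w_{1})w_{1}$.
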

\begin{proof}
From Proposition \ref{p3}, we have that $w_{1}$ is a nontrivial
solution of problem \eqref{eq02}.
Moreover, we have $$w_{1}\in \mathcal{N}^{+}.$$
In fact, if $w_{1}\in \mathcal{N}^{-}$, by Lemma \ref{le0},
there exists a unique $t^{-}(w_{1})>0$, $t^{+}(w_{1})>0$
such that $t^{-}(w_{1})\,w_{1}\in \mathcal{N}^{-}$, then we have $t^{-}(w_{1})=1$ and
$t^{+}(w_{1})<1$. Since
$J(t^{+}(w_{1})\,w_{1})=\min\limits_{t\in[0,t^{-}(w_{1})]}J(t\,w_{1})$,
we can find a $t_{0}\in (t^{+}(w_{1})\,,\,t^{-}(w_{1}))$ such that
$$J(t^{+}(w_{1})\,w_{1})<J(t_{0}\,w_{1})\leq J(t^{-}(w_{1})\,w_{1})=J(1\cdot w_{1})=m_{J},$$
which implies that $w_{1}\in \mathcal{N}^{+}$.
Since $J(w_{1})=J(|w_{1}|)$ and $|w_{1}|\in \mathcal{N}^{+}$, we can take $w_{1}\geq 0$.
By the strong maximum principle \cite{ls2007}, we get $w_{1}>0$ in $H^{\frac{\alpha}{2}}_{0,L}(\mathcal{C}_{\Omega})$.
Hence, $u_{1}(x)=w_{1}(x,0)\in H^{\frac{\alpha}{2}}_{0}(\Omega)$ is a positive solution of
problem \eqref{eq01} and $J(w_{1})=m^{+}$. We complete the proof.
\end{proof}

\begin{remark}\label{r2}
For $w_{1}\in \mathcal{N}^{+}$, by the H\"{o}lder inequality and the Young inequality we have
\begin{equation*}\aligned
0>J(w_{1})
&=\frac{\alpha}{2N}\int\limits_{\mathcal{C}_{\Omega}}y^{1-\alpha}|\nabla w_{1}|^{2}dxdy
-\lambda\,\frac{2^*_{\alpha}-q}{q\,2^*_{\alpha}}\int\limits_{\Omega\times\{0\}} f(x)|w|^{q}dx\\
&\geq \frac{\alpha}{2N}\|w_{1}\|^{2}_{H^{\frac{\alpha}{2}}_{0,L}(\mathcal{C}_{\Omega})}
-\lambda\,\frac{2^*_{\alpha}-q}{q\,2^*_{\alpha}}\,
|f|_{L^{\infty}}\,\Big(\kappa_{\alpha}S(\alpha,N)\Big)^{-\frac{q}{2}}\|w_{1}\|_{H^{\frac{\alpha}{2}}_{0,L}(\mathcal{C}_{\Omega})}^{q}\\
&\geq-\lambda\,\frac{2-q}{q\,2^*_{\alpha}}\Big(
|f|_{L^{\infty}}\,\kappa_{\alpha}S(\alpha,N)^{-\frac{q}{2}}\Big)^{\frac{2}{2-q}}.\\
\endaligned
\end{equation*}
So, we deduce that $J(w_{1})\to 0$ as $\lambda\to 0$.
\end{remark}

\vspace{5mm}

\subsection{The minimizer solution on $\mathcal{N}^{-}$}
In the following, we prove that problem \eqref{eq02}
has a solution in $\mathcal{N}^{-}$. Since $J$
is coercive and  bounded from below on $\mathcal{N}$
and so on $\mathcal{N}^{-}$, $$m^{-}=\inf\limits_{\mathcal{N}^{-}}J(w).$$
Then, there exists a minimizing sequence $\{w_{n}\}\subset \mathcal{N}^{-}$
 such that
\begin{equation}\label{eq90}
J(w_{n})\to m^{-}\quad \text{as}\,\,n\to \infty.
\end{equation}

\begin{lemma}\label{le8}
The set $\mathcal{N}^{-}$ is closed.
\end{lemma}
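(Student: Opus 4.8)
The plan is to show that $\mathcal{N}^{-}$ is closed in $H^{\frac{\alpha}{2}}_{0,L}(\mathcal{C}_{\Omega})$ by taking the closure $\overline{\mathcal{N}^{-}}$ and verifying it is contained in $\mathcal{N}^{-}$. Recall from Lemma~\ref{le3} that for $\lambda\in(0,\lambda_{2})$ we have $\mathcal{N}^{0}=\{0\}$, so $\mathcal{N}^{-}=\overline{\mathcal{N}^{-}}\setminus\{0\}$ will follow once I rule out $0\in\overline{\mathcal{N}^{-}}$. Thus the argument splits into two points: first, any limit of a sequence in $\mathcal{N}^{-}$ lies in $\mathcal{N}\cup\{0\}$ and in fact cannot be $0$; second, such a nonzero limit satisfies the strict inequality $\langle\Psi'(w),w\rangle<0$ defining $\mathcal{N}^{-}$, using that $\mathcal{N}^{0}=\{0\}$.

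First I would take a sequence $\{w_{n}\}\subset\mathcal{N}^{-}$ with $w_{n}\to w$ strongly in $H^{\frac{\alpha}{2}}_{0,L}(\mathcal{C}_{\Omega})$. By the trace embedding (property (ii) in Section~2) we get $w_{n}(\cdot,0)\to w(\cdot,0)$ in $L^{r}(\Omega)$ for all $r\in[2,2^{*}_{\alpha}]$, so the three functionals $w\mapsto\|w\|^{2}$, $w\mapsto\int_{\Omega\times\{0\}}f(x)|w|^{q}dx$ and $w\mapsto\int_{\Omega\times\{0\}}|w|^{2^{*}_{\alpha}}dx$ all pass to the limit. Hence $\Psi(w_{n})=\langle J'(w_{n}),w_{n}\rangle\to\Psi(w)$ and $\langle\Psi'(w_{n}),w_{n}\rangle\to\langle\Psi'(w),w\rangle$. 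Since $\Psi(w_{n})=0$ for all $n$, we get $\Psi(w)=0$, i.e. $w\in\mathcal{N}\cup\{0\}$; and since $\langle\Psi'(w_{n}),w_{n}\rangle<0$, passing to the limit yields $\langle\Psi'(w),w\rangle\leq 0$.

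The key step — and the main obstacle — is excluding the degenerate possibilities $w=0$ and $\langle\Psi'(w),w\rangle=0$ with $w\neq 0$. For the latter: if $w\neq 0$ and $\langle\Psi'(w),w\rangle=0$ then $w\in\mathcal{N}^{0}$, contradicting $\mathcal{N}^{0}=\{0\}$ from Lemma~\ref{le3}; hence $\langle\Psi'(w),w\rangle<0$, i.e. $w\in\mathcal{N}^{-}$. For the former, I need a uniform lower bound $\|w_{n}\|_{H^{\frac{\alpha}{2}}_{0,L}(\mathcal{C}_{\Omega})}\geq c_{0}>0$ on $\mathcal{N}^{-}$. This I obtain by a standard Sobolev argument: for $w\in\mathcal{N}^{-}$ one has $(2-q)\|w\|^{2}<(2^{*}_{\alpha}-q)\int_{\Omega\times\{0\}}|w|^{2^{*}_{\alpha}}dx$, and combining with the Sobolev inequality \eqref{eq04} (together with the isometry $\|w\|^{2}_{H^{\frac{\alpha}{2}}_{0,L}(\mathcal{C}_{\Omega})}=\kappa_{\alpha}\int_{\mathcal{C}_{\Omega}}y^{1-\alpha}|\nabla w|^{2}dxdy$) gives
\begin{equation*}
(2-q)\|w\|^{2}_{H^{\frac{\alpha}{2}}_{0,L}(\mathcal{C}_{\Omega})}<(2^{*}_{\alpha}-q)\bigl(\kappa_{\alpha}S(\alpha,N)\bigr)^{-\frac{2^{*}_{\alpha}}{2}}\|w\|^{2^{*}_{\alpha}}_{H^{\frac{\alpha}{2}}_{0,L}(\mathcal{C}_{\Omega})},
\end{equation*}
so $\|w\|^{2^{*}_{\alpha}-2}_{H^{\frac{\alpha}{2}}_{0,L}(\mathcal{C}_{\Omega})}>\frac{2-q}{2^{*}_{\alpha}-q}\bigl(\kappa_{\alpha}S(\alpha,N)\bigr)^{\frac{2^{*}_{\alpha}}{2}}=:c_{0}^{2^{*}_{\alpha}-2}>0$. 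Therefore $\|w\|_{H^{\frac{\alpha}{2}}_{0,L}(\mathcal{C}_{\Omega})}\geq c_{0}$ for every $w\in\mathcal{N}^{-}$, this bound is preserved in the strong limit, so $w\neq 0$. Combining the two points, $w\in\mathcal{N}^{-}$, which proves that $\mathcal{N}^{-}$ is closed.
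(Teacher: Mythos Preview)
Your proof is correct and follows essentially the same approach as the paper: both arguments use that the defining functionals are continuous under strong convergence, invoke Lemma~\ref{le3} to rule out nonzero limits in $\mathcal{N}^{0}$, and exclude the limit $0$ via a uniform lower bound on $\|w\|$ over $\mathcal{N}^{-}$. Your version is in fact more explicit, since you derive the lower bound from the Sobolev inequality~\eqref{eq04}, whereas the paper simply asserts the existence of such a $\gamma>0$.
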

\begin{proof}
Suppose that there are some $w_{n}\in \mathcal{N}^{-}$ and $w_{n}\to w_{0}\not\in \mathcal{N}^{-}$,
 then $w_{0}\in \mathcal{N}^{0}=\{0\}$. For $w_{n}\in \mathcal{N}^{-}$, we have
$$0\leq (2-q)\kappa_{\alpha}\int\limits_{\mathcal{C}_{\Omega}}y^{1-\alpha}|\nabla w_{n}|^{2}dxdy
<(2^*_{\alpha}-q)\int\limits_{\Omega\times\{0\}}|w_{n}|^{2^*_{\alpha}}dx\to 0.$$
This implies that $\lim\limits_{n\to \infty}\kappa_{\alpha}\int\limits_{\mathcal{C}_{\Omega}}y^{1-\alpha}|\nabla w_{n}|^{2}dxdy=0$. Note that if $w_{n}\in \mathcal{N}^{-}$, then $\|w_{n}\|_{H^{\frac{\alpha}{2}}_{0,L}(\mathcal{C}_{\Omega})}\\ \geq \gamma>0$
for a suitable $\gamma>0$. This is a contradiction.
Hence we have $w_{0}\in \mathcal{N}^{-}$, and so $\mathcal{N}^{-}$ is closed.
\end{proof}

Next, we will use the trace inequality \eqref{eq04} to the family of minimizers
$w_{\varepsilon}=E_{\alpha}(u_{\varepsilon})$,
where $u_{\varepsilon}$ is given in \eqref{eq05}.

 Note that $f$ is a indefinite continuous function on
$\Omega$ and $f^{+}\not\equiv0$, where $f^{+}=\max\{f(x)\,,\,0\}$,
then the set $\Sigma:=\{x\in \Omega:\,f(x)>0\}\subset \Omega$ is an open set with positive measure.
Without loss of generality, we may assume that $\Sigma$ is a domain.

Let $\eta\in C^{\infty}_{0}(\mathcal{C}_{\Sigma})$,
$0\leq\eta\leq 1$( $\forall (x,y)\in \Sigma \times(0,\infty)$),
 be a positive function satisfying
$$\Big(\text{supp}f^{+}\times\{y>0\}\Big)
\cap \{(x,y)\in \mathcal{C}_{\Sigma}:\,\eta=1\}\not\neq \emptyset.$$
Moreover, for small fixed
$\rho>0$,
$$\eta(x,y)=\left\{
\aligned
&1,\quad &(x,y)\in B_{\rho},\\
&0,&(x,y)\not\in \overline{B_{2\rho}},\\
\endaligned
\right.$$
where $B_{\rho}=\{(x,y):\,|(x,y)|<\rho,\,y>0,\,x\in \Sigma\}$. We take $\rho$ small
enough such that $\overline{B_{2\rho}}\subset \overline{\mathcal{C}_{\Sigma}}$.
Note that $\eta\,w_{\varepsilon}\in H^{\frac{\alpha}{2}}_{0,L}(\mathcal{C}_{\Omega})$.

Let $\lambda_{*}>0$ be as in  \eqref{lambda}.
Then for $\lambda\in (0\,,\,\lambda_{*})$ we have the following result.
\begin{lemma}\label{l6}
Let $w_{1}$ be the local minimum in Proposition \ref{p3}. Then for $\varepsilon>0$ small enough,
\begin{equation*}\label{eq51}
\sup\limits_{t\geq0}J(w_{1}+t\eta w_{\varepsilon})<m_{J}+\frac{\alpha}{2N}\,\Big(\kappa_{\alpha}S(\alpha,N)\Big)^{\frac{N}{\alpha}}.
\end{equation*}

\end{lemma}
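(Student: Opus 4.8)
The plan is to estimate the supremum of $J$ along the two‑parameter family $w_1 + t\eta w_\varepsilon$ by combining the Brezis–Nirenberg type asymptotic expansions for the truncated extremal function $\eta w_\varepsilon$ with the fact that $w_1$ is already a critical point of $J$ at the level $m_J<0$. First I would record the standard asymptotics as $\varepsilon\to 0$: writing $a_\varepsilon = \kappa_\alpha\int_{\mathcal C_\Omega} y^{1-\alpha}|\nabla(\eta w_\varepsilon)|^2\,dx\,dy$ and $b_\varepsilon = \int_{\Omega\times\{0\}}|\eta w_\varepsilon|^{2^*_\alpha}\,dx$, one has
\begin{equation*}
a_\varepsilon = \big(\kappa_\alpha S(\alpha,N)\big)^{N/\alpha} + O(\varepsilon^{N-\alpha}),\qquad b_\varepsilon = \big(\kappa_\alpha S(\alpha,N)\big)^{N/\alpha} + O(\varepsilon^{N}),
\end{equation*}
so that $a_\varepsilon/b_\varepsilon^{2/2^*_\alpha} = S(\alpha,N)\kappa_\alpha + O(\varepsilon^{N-\alpha})$, and also $\int_{\Omega\times\{0\}} |\eta w_\varepsilon|^{r}\,dx$ behaves like the usual powers of $\varepsilon$ for $r\in\{q,2,2^*_\alpha\}$. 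These are exactly the estimates underlying Lemma \ref{le1}; I would cite them from \cite{t2011,se2013-1,se2013-2} or reprove them by the scaling $u_\varepsilon(x)=\varepsilon^{-(N-\alpha)/2}u_1(x/\varepsilon)$.

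Next I would split the argument into a region of small $t$ and a region of large $t$. Since $J(w_1)=m_J<0$ and $J$ is continuous, there is $t_0>0$ (independent of small $\varepsilon$, using $\|\eta w_\varepsilon\|$ bounded) such that $\sup_{0\le t\le t_0} J(w_1+t\eta w_\varepsilon) < m_J + \frac{\alpha}{2N}\big(\kappa_\alpha S(\alpha,N)\big)^{N/\alpha}$ for $\varepsilon$ small, because at $t=0$ the value is $m_J$ and the gap $\frac{\alpha}{2N}(\kappa_\alpha S)^{N/\alpha}$ is strictly positive; here I use that $\eta w_\varepsilon \rightharpoonup 0$ weakly, so the perturbation is small uniformly on $[0,t_0]$. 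For $t\ge t_0$, I would expand
\begin{equation*}
J(w_1+t\eta w_\varepsilon) = J(w_1) + \langle J'(w_1), t\eta w_\varepsilon\rangle + \frac{t^2}{2}\,a_\varepsilon - \frac{t^{2^*_\alpha}}{2^*_\alpha} b_\varepsilon + (\text{lower order and cross terms}),
\end{equation*}
and since $J'(w_1)=0$ (Proposition \ref{p3} / Lemma \ref{le5}), the linear term drops. The cross terms coming from expanding $|w_1+t\eta w_\varepsilon|^{2^*_\alpha}$ and $f(x)|w_1+t\eta w_\varepsilon|^q$ are controlled: the key positive cross term $-t^{2^*_\alpha-1}\int w_1^{2^*_\alpha-1}\eta w_\varepsilon$ is of order $\varepsilon^{(N-\alpha)/2}$, and the weight term $-\frac{\lambda}{q}\int f(x)(|w_1+t\eta w_\varepsilon|^q - |w_1|^q)$ is of order $\varepsilon^{(N-\alpha)/2}$ at worst (using $q<2$ and that $f\ge 0$ on $\mathrm{supp}\,\eta\subset\Sigma$, this term is in fact $\le 0$ up to lower order). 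Therefore $J(w_1+t\eta w_\varepsilon) \le m_J + \max_{t\ge 0}\big(\frac{t^2}{2}a_\varepsilon - \frac{t^{2^*_\alpha}}{2^*_\alpha}b_\varepsilon\big) + C\varepsilon^{(N-\alpha)/2}$, and the maximum of the scalar function $g(t)=\frac{t^2}{2}a_\varepsilon - \frac{t^{2^*_\alpha}}{2^*_\alpha}b_\varepsilon$ equals $\frac{\alpha}{2N}\big(a_\varepsilon/b_\varepsilon^{2/2^*_\alpha}\big)^{N/\alpha} = \frac{\alpha}{2N}\big(\kappa_\alpha S(\alpha,N)\big)^{N/\alpha} + O(\varepsilon^{N-\alpha})$.

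The main obstacle is the bookkeeping of signs and orders in the cross terms: one must be sure that the harmful positive contribution from the interaction $\int w_1^{2^*_\alpha-1}\eta w_\varepsilon\,dx$ (which is $O(\varepsilon^{(N-\alpha)/2})$) is strictly dominated, as $\varepsilon\to 0$, by the negative error $O(\varepsilon^{N-\alpha})$ only in high dimensions, whereas for all $N\ge 2$ with $0<\alpha<2$ one has $(N-\alpha)/2 \ge N-\alpha \iff N\le \alpha$, which never holds — so in fact the comparison must be the other way: the positive cross term $O(\varepsilon^{(N-\alpha)/2})$ is larger than $O(\varepsilon^{N-\alpha})$, hence one cannot absorb it into the gap for free. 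The resolution, as in Brezis–Nirenberg, is that the presence of the weight term together with $w_1>0$ produces a strictly negative contribution of the \emph{same} order $\varepsilon^{(N-\alpha)/2}$ (because $w_1(x,0)>0$ near $\mathrm{supp}\,f^+$ and $q<2^*_\alpha$), and a careful constant‑chasing shows the net cross contribution is $\le -c\,\varepsilon^{(N-\alpha)/2} + o(\varepsilon^{(N-\alpha)/2}) <0$ for $\varepsilon$ small; this is where the hypothesis $f^+\not\equiv 0$ and the choice of $\eta$ with $(\mathrm{supp}\,f^+\times\{y>0\})\cap\{\eta=1\}\ne\emptyset$ are used decisively. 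Putting the two regimes together yields the strict inequality claimed, for all sufficiently small $\varepsilon>0$.
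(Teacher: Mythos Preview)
Your overall strategy---use $J'(w_1)=0$ to kill the linear term, expand, and compare the surviving cross terms against the Brezis--Nirenberg error $O(\varepsilon^{N-\alpha})$---is exactly the paper's approach. The split into small and large $t$ is harmless but unnecessary; the paper treats all $t\ge 0$ at once. However, your cross-term bookkeeping contains a genuine error that, as written, leaves a gap.

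After using $J'(w_1)=0$, the linear contribution $t\langle w_1,\eta w_\varepsilon\rangle$ equals $t\lambda\!\int f|w_1|^{q-1}\eta w_\varepsilon + t\!\int |w_1|^{2^*_\alpha-1}\eta w_\varepsilon$, and these cancel \emph{exactly} against the first-order pieces of $-\frac{\lambda}{q}\int f|w_1+t\eta w_\varepsilon|^{q}$ and $-\frac{1}{2^*_\alpha}\int |w_1+t\eta w_\varepsilon|^{2^*_\alpha}$. So there is no ``harmful positive contribution from the interaction $\int w_1^{2^*_\alpha-1}\eta w_\varepsilon$'' of order $\varepsilon^{(N-\alpha)/2}$ left over; that term disappears. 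The surviving cross term from the critical expansion is the one at the \emph{other} end, namely
\[
-\,t^{2^*_\alpha-1}\int_{\Omega\times\{0\}} (\eta w_\varepsilon)^{2^*_\alpha-1}\,w_1\,dx,
\]
and since $w_1\ge c_0>0$ on $B_\rho\cap\{y=0\}$ by the strong maximum principle, this term is $\le -c\,\varepsilon^{(N-\alpha)/2}$. \emph{This}, not the $f$-weighted concave term, is the source of the strict gain; the $f$-term after subtracting its linear part is simply $\le 0$ (by convexity of $s\mapsto s^q$ for $1<q<2$ and $f>0$ on $\Sigma$) and is discarded. Once you record the correct surviving term, your inequality becomes
\[
J(w_1+t\eta w_\varepsilon)\le m_J+\max_{t\ge 0}\Bigl(\tfrac{t^2}{2}a_\varepsilon-\tfrac{t^{2^*_\alpha}}{2^*_\alpha}b_\varepsilon\Bigr)-c\,\varepsilon^{(N-\alpha)/2}+O(\varepsilon^{N-\alpha})+o(\varepsilon^{(N-\alpha)/2}),
\]
and the conclusion follows immediately since $(N-\alpha)/2<N-\alpha$ is false but irrelevant---the point is that the $-c\,\varepsilon^{(N-\alpha)/2}$ dominates the positive $O(\varepsilon^{N-\alpha})$ error (here $N-\alpha>(N-\alpha)/2$), not the other way round. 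Your attempted ``resolution'' via the weight term is neither needed nor correct at the claimed order; fix the sign and the source of the gain and the proof goes through exactly as in the paper.
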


\begin{proof}
First, we have
\begin{equation}\label{eq52}
\aligned
J(w_{1}+t\eta w_{\varepsilon})
&=\frac{\kappa_{\alpha}}{2}\int\limits_{\mathcal{C}_{\Omega}}y^{1-\alpha}|\nabla(w_{1}+t \eta w_{\varepsilon})|^{2}dxdy
-\frac{\lambda}{q}\int\limits_{\Omega\times\{0\}} f(x)|w_{1}+t \eta w_{\varepsilon}|^{q}dx\\
&-\frac{1}{2^*_{\alpha}}\int\limits_{\Omega\times\{0\}}|w_{1}+t \eta w_{\varepsilon}|^{2^*_{\alpha}}dx\\
&=\frac{1}{2}\|w_{1}\|^{2}_{H^{\frac{\alpha}{2}}_{0,L}(\mathcal{C}_{\Omega})}+\frac{t^{2}}{2}\|\eta w_{\varepsilon}\|^{2}_{H^{\frac{\alpha}{2}}_{0,L}(\mathcal{C}_{\Omega})}
+t\langle w_{1}\,,\,\eta w_{\varepsilon}\rangle\\
&-\frac{\lambda}{q}\int\limits_{\Omega\times\{0\}} f(x)|w_{1}+t \eta w_{\varepsilon}|^{q}dx
-\frac{1}{2^*_{\alpha}}\int\limits_{\Omega\times\{0\}}|w_{1}+t \eta w_{\varepsilon}|^{2^*_{\alpha}}dx.\\
\endaligned
\end{equation}

Since $w_{1}$ is a solution of Eq.\eqref{eq02},
we get that
\begin{equation}\label{eq53**}
\frac{1}{2}\|w_{1}\|^{2}_{H^{\frac{\alpha}{2}}_{0,L}(\mathcal{C}_{\Omega})}
=J(w_{1})+\frac{\lambda}{q}\int\limits_{\Omega\times\{0\}} f(x)|w_{1}|^{q}dx
+\frac{1}{2^*_{\alpha}}\int\limits_{\Omega\times\{0\}}|w_{1}|^{2^*_{\alpha}}dx;
\end{equation}

\begin{equation}\label{eq53***}
t\langle w_{1}\,,\,\eta w_{\varepsilon}\rangle
=t\lambda\int\limits_{\Omega\times\{0\}} f(x)|w_{1}|^{q-1}\eta w_{\varepsilon}dx
+t\int\limits_{\Omega\times\{0\}}|w_{1}|^{2^*_{\alpha}-1}\,\eta w_{\varepsilon}dx;
\end{equation}

\begin{equation}\label{eq53}
\aligned
\int\limits_{\Omega\times\{0\}} |w_{1}+t \eta w_{\varepsilon}|^{2^*_{\alpha}}dx
&=\int\limits_{\Omega\times\{0\}}  |w_{1}|^{2^*_{\alpha}}dx
+t^{2^*_{\alpha}}\int\limits_{\Omega\times\{0\}}|\eta w_{\varepsilon}|^{2^*_{\alpha}}dx
+2^*_{\alpha}t\int\limits_{\Omega\times\{0\}} |w_{1}|^{2^*_{\alpha}-2}w_{1}\eta w_{\varepsilon}dx\\
&+2^*_{\alpha}t^{2^*_{\alpha}-1}\int\limits_{\Omega\times\{0\}}|\eta w_{\varepsilon}|^{2^*_{\alpha}-2}\eta w_{\varepsilon}w_{1}dx
+o(\varepsilon^{\frac{N-\alpha}{2}});
\endaligned
\end{equation}
and
\begin{equation}\label{eq53*}
\aligned
&\int\limits_{\Sigma\times\{0\}} f(x)\Big(|w_{1}+t\eta w_{\varepsilon}|^{q}-|w_{1}|^{q}
+qt|w_{1}|^{q-1}\eta w_{\varepsilon}\Big)dx\\
&=q\int\limits_{\Sigma\times\{0\}} f^{+}(x)\Big{\{}\int_{0}^{t\eta w_{\varepsilon}}(|w_{1}+\tau|^{q-1}+|w_{1}|^{q-1}\tau)d\tau\Big{\}}dx\\
&\geq q\int\limits_{\Sigma\times\{0\}} f^{+}(x)\Big{\{}\int_{0}^{t\eta w_{\varepsilon}}(|w_{1}+\tau|^{q-1}+|w_{1}|^{q-1}\tau)d\tau\Big{\}}dx\\
&\geq 0.\\
\endaligned
\end{equation}

Substituting \eqref{eq53**},\eqref{eq53***},\eqref{eq53} and \eqref{eq53*} in \eqref{eq52}
and using the fact that $\eta\in C^{\infty}_{0}(\mathcal{C}_{\Sigma})$, we obtain

\begin{equation*}\label{eq56}
\aligned
&J(w_{1}+t \eta w_{\varepsilon})\\
&=J(w_{1})
-\frac{\lambda}{q}\int\limits_{\Omega\times\{0\}} f(x)(|w_{1}+t\eta w_{\varepsilon}|^{q}-|w_{1}|^{q})dx
+t\langle w_{1}\,,\,\eta w_{\varepsilon}\rangle
-t\int\limits_{\Omega\times\{0\}} |w_{1}|^{2^*_{\alpha}-1}\eta w_{\varepsilon}dx\\
&+\frac{t^{2}}{2}\,\|\eta w_{\varepsilon}\|^{2}_{H^{\frac{\alpha}{2}}_{0,L}(\mathcal{C}_{\Omega})}
-\frac{t^{2^*_{\alpha}}}{2^*_{\alpha}}\int\limits_{\Omega\times\{0\}} |\eta w_{\varepsilon}|^{2^*_{\alpha}}dx
-t^{2^*_{\alpha}-1}\int\limits_{\Omega\times\{0\}} |\eta w_{\varepsilon}|^{2^*_{\alpha}-1}w_{1}dx
+o(\epsilon^{\frac{N-\alpha}{2}})\\
&=J(w_{1})
-\frac{\lambda}{q}\int\limits_{\Sigma\times\{0\}} f(x)(|w_{1}+t\eta w_{\varepsilon}|^{q}-|w_{1}|^{q}
+qt|w_{1}|^{q-1}\eta w_{\varepsilon})dx\\
&+\frac{t^{2}}{2}\,\|\eta w_{\varepsilon}\|^{2}_{H^{\frac{\alpha}{2}}_{0,L}(\mathcal{C}_{\Omega})}
-\frac{t^{2^*_{\alpha}}}{2^*_{\alpha}}\int\limits_{\Omega\times\{0\}} |\eta w_{\varepsilon}|^{2^*_{\alpha}}dx
-t^{2^*_{\alpha}-1}\int\limits_{\Omega\times\{0\}} |\eta w_{\varepsilon}|^{2^*_{\alpha}-1}w_{1}dx
+o(\epsilon^{\frac{N-\alpha}{2}})\\
&\leq J(w_{1})
+\frac{t^{2}}{2}\,\|\eta w_{\varepsilon}\|^{2}_{H^{\frac{\alpha}{2}}_{0,L}(\mathcal{C}_{\Omega})}
-\frac{t^{2^*_{\alpha}}}{2^*_{\alpha}}\int\limits_{\Omega\times\{0\}} |\eta w_{\varepsilon}|^{2^*_{\alpha}}dx\\
&-t^{2^*_{\alpha}-1}\int\limits_{\Omega\times\{0\}} |\eta w_{\varepsilon}|^{2^*_{\alpha}-1}w_{1}dx
+o(\epsilon^{\frac{N-\alpha}{2}}).\\
\endaligned
\end{equation*}
Since
\begin{equation*}
\aligned
\int\limits_{\Omega\times\{0\}}|\eta w_{\varepsilon}|^{2^*_{\alpha}-1}dx
&=\int\limits_{\Omega\times\{0\}}\Big[\frac{\eta \varepsilon^{\frac{N-\alpha}{2}}}{(\varepsilon^{2}+|x|^{2})^{\frac{N-\alpha}{2}}}\Big]^{\frac{N+\alpha}{N-\alpha}}dx\\
&=\int\limits_{\mathbb{R}^{N}}\frac{\varepsilon^{\frac{N+\alpha}{2}}}
{\varepsilon^{N+\alpha}(1+|z|^{2})^{\frac{N+\alpha}{2}}}\,\,\varepsilon^{N}dz\\
&=C\,\varepsilon^{\frac{N-\alpha}{2}}\int^{+\infty}_{0}\frac{1}{(1+r^{2})^{\frac{N+\alpha}{2}}}\\
&\leq C\,\varepsilon^{\frac{N-\alpha}{2}},
\endaligned
\end{equation*}
and from \cite{bc2012,szy2014}, we have
$$\|\eta w_{\varepsilon}\|^{2}_{H^{\frac{\alpha}{2}}_{0,L}(\mathcal{C}_{\Omega})}=\| w_{\varepsilon}\|^{2}_{H^{\frac{\alpha}{2}}_{0,L}(\mathcal{C}_{\Omega})}+O(\varepsilon^{N-\alpha}),$$
$$\int\limits_{\Omega\times\{0\}} |\eta w_{\varepsilon}|^{2^*_{\alpha}}dx
=\int\limits_{\mathbb{R}^{N}}\Big(\frac{\varepsilon}{\varepsilon^{2}+|x|^{2}}\Big)^{N}dx+O(\varepsilon^N).$$
Thus,
\begin{equation}\label{eq56}
J(w_{1}+t \eta w_{\varepsilon})
\leq J(w_{1})+
\frac{t^{2}}{2}\,\|w_{\varepsilon}\|^{2}_{H^{\frac{\alpha}{2}}_{0,L}(\mathcal{C}_{\Omega})}
-\frac{t^{2^*_{\alpha}}}{2^*_{\alpha}}\int\limits_{\Omega\times\{0\}} | w_{\varepsilon}|^{2^*_{\alpha}}dx+O(\varepsilon^N)-C\,\varepsilon^{\frac{N-\alpha}{2}}+o(\epsilon^{\frac{N-\alpha}{2}})
\end{equation}

Let
$$h(t)=\frac{t^{2}}{2}\,\| w_{\varepsilon}\|^{2}_{H^{\frac{\alpha}{2}}_{0,L}(\mathcal{C}_{\Omega})}
-\frac{t^{2^*_{\alpha}}}{2^*_{\alpha}}\int\limits_{\Omega\times\{0\}} | w_{\varepsilon}|^{2^*_{\alpha}}dx.
$$
for $t>0$. Since $h(t)$ goes to $-\infty$ as $t\to \infty$, $\sup\limits_{t\geq 0}h(t)$ is
achieved at some $t_{\varepsilon}>0$ with $h'(t_{\varepsilon})=0$. That is
$$
0=\| w_{\varepsilon}\|^{2}_{H^{\frac{\alpha}{2}}_{0,L}(\mathcal{C}_{\Omega})}
-t_{\varepsilon}^{2^*_{\alpha}-2}\int\limits_{\Omega\times\{0\}} | w_{\varepsilon}|^{2^*_{\alpha}}dx
.$$
Therefore,
\begin{equation}\label{eq57}
h(t)\leq h(t_{\varepsilon})
=\Big(\frac{1}{2}-\frac{1}{2^*_\alpha}\Big)
\|w_{\varepsilon}\|^{\frac{2^*_{\alpha}}{2^*_{\alpha}-2}}_{H^{\frac{\alpha}{2}}_{0,L}(\mathcal{C}_{\Omega})}
\Big(\int\limits_{\Omega\times\{0\}} |w_{\varepsilon}|^{2^*_{\alpha}}dx\Big)^{-\frac{2}{2^*_{\alpha}-2}}.
\end{equation}

On the other hand, since $w_{\varepsilon}$ are minimizers of the trace inequality of \eqref{eq04}, we have that
\begin{equation}\label{eq91}
\|w_{\varepsilon}\|^{2}_{H^{\frac{\alpha}{2}}_{0,L}(\mathcal{C}_{\Omega})}=\kappa_{\alpha}\,S(\alpha,N)\,
\Big(\int\limits_{\Omega\times\{0\}} |w_{\varepsilon}|^{2^*_{\alpha}}dx\Big)^{\frac{2}{2^*_{\alpha}}}.
\end{equation}
Hence, from \eqref{eq56},\eqref{eq57} and \eqref{eq91}, we obtain
\begin{equation*}
\aligned
J(w_{1}+t\eta w_{\varepsilon})
&\leq J(w_{1})+\frac{\alpha}{2N}\Big(\kappa_{\alpha}\,S(\alpha,N)\Big)^{\frac{N}{\alpha}}+O(\varepsilon^N)
-C\varepsilon^{\frac{N-\alpha}{2}}+o(\varepsilon^{\frac{N-4}{2}})\\
&<m_{J}+\frac{\alpha}{2N}\Big(\kappa_{\alpha}\,S(\alpha,N)\Big)^{\frac{N}{\alpha}},
\endaligned
\end{equation*}
for $\varepsilon>0$ sufficiently small.
\end{proof}

The following proposition provides a precise description of the (PS)-sequence of $J$.
\begin{proposition}\label{p2}
If every minimizing sequence $\{u_{n}\}$ of $J$ on $\mathcal{N}^{-}$ satisfies
$m_{J}\leq J(w_{n})<m_{J}+\frac{\alpha}{2N}(\kappa_{\alpha}S)^{\frac{N}{\alpha}}$,
then $\{w_{n}\}$ satisfies the {\em (PS)}-condition on  $\mathcal{N}^{-}$.
\end{proposition}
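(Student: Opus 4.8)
The plan is to combine Ekeland's variational principle with a Brezis--Lieb splitting, and to use the sub-threshold energy level to exclude the loss of compactness quantized by the sharp Sobolev energy $\tfrac{\alpha}{2N}(\kappa_{\alpha}S(\alpha,N))^{N/\alpha}$.

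First I would turn a minimizing sequence into a Palais--Smale sequence. Since $J$ is coercive and bounded from below on $\mathcal{N}$, hence on the closed set $\mathcal{N}^{-}$ (Lemma~\ref{le6} and Lemma~\ref{le8}), Ekeland's variational principle applied to $\inf_{\mathcal{N}^{-}}J$ produces $\{w_{n}\}\subset\mathcal{N}^{-}$ with $J(w_{n})\to m^{-}$ and $J(z)\ge J(w_{n})-\tfrac1n\|w_{n}-z\|_{H^{\frac{\alpha}{2}}_{0,L}(\mathcal{C}_{\Omega})}$ for all $z\in\mathcal{N}^{-}$. Perturbing by the fibered maps $z=t^{-}(w_{n}-v)(w_{n}-v)\in\mathcal{N}^{-}$ built as in Lemma~\ref{le7} and arguing exactly as in the proof of Proposition~\ref{p3} — in particular using that $(2-q)\|w_{n}\|^{2}_{H^{\frac{\alpha}{2}}_{0,L}(\mathcal{C}_{\Omega})}-(2^{*}_{\alpha}-q)\int_{\Omega\times\{0\}}|w_{n}|^{2^{*}_{\alpha}}dx$ stays bounded away from $0$ on $\mathcal{N}^{-}$ when $\lambda<\lambda_{*}$ (otherwise $K(w_{n})=o(1)$, contradicting the positive lower bound on $K$ valid for $\lambda<\lambda_{*}$, cf.\ \eqref{eq00}) — one deduces $\|J'(w_{n})\|_{H^{-\frac{\alpha}{2}}(\mathcal{C}_{\Omega})}\to 0$. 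By coerciveness $\{w_{n}\}$ is bounded, so up to a subsequence $w_{n}\rightharpoonup w$ in $H^{\frac{\alpha}{2}}_{0,L}(\mathcal{C}_{\Omega})$, $w_{n}(\cdot,0)\to w(\cdot,0)$ in $L^{p}(\Omega)$ for $1\le p<2^{*}_{\alpha}$, and $w_{n}(\cdot,0)\to w(\cdot,0)$ a.e.\ in $\Omega$.

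Next I would identify the weak limit. Passing to the limit in $\langle J'(w_{n}),\varphi\rangle=o(1)$ is routine: the bilinear form is weakly continuous, $\lambda\int_{\Omega\times\{0\}}f(x)|w_{n}|^{q-2}w_{n}\varphi\,dx$ converges by the strong $L^{q}$ convergence of the traces, and $|w_{n}(\cdot,0)|^{2^{*}_{\alpha}-2}w_{n}(\cdot,0)$ is bounded in $L^{2^{*}_{\alpha}/(2^{*}_{\alpha}-1)}(\Omega)$ and converges a.e., hence weakly, to $|w(\cdot,0)|^{2^{*}_{\alpha}-2}w(\cdot,0)$. Thus $J'(w)=0$, so either $w\equiv0$ or $w\in\mathcal{N}$; in both cases $J(w)\ge m_{J}$, since $m_{J}<0$ by \eqref{eq26}. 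Setting $v_{n}:=w_{n}-w\rightharpoonup0$, the Hilbert-space identity gives $\|w_{n}\|^{2}=\|w\|^{2}+\|v_{n}\|^{2}+o(1)$, the Brezis--Lieb lemma applied to the traces gives $\int_{\Omega\times\{0\}}|w_{n}|^{2^{*}_{\alpha}}dx=\int_{\Omega\times\{0\}}|w|^{2^{*}_{\alpha}}dx+\int_{\Omega\times\{0\}}|v_{n}|^{2^{*}_{\alpha}}dx+o(1)$, and $\int_{\Omega\times\{0\}}f(x)|w_{n}|^{q}dx\to\int_{\Omega\times\{0\}}f(x)|w|^{q}dx$. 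Subtracting $\langle J'(w),w\rangle=0$ from $\langle J'(w_{n}),w_{n}\rangle=0$ yields $\|v_{n}\|^{2}-\int_{\Omega\times\{0\}}|v_{n}|^{2^{*}_{\alpha}}dx=o(1)$; along a further subsequence set $\ell:=\lim_{n}\|v_{n}\|^{2}=\lim_{n}\int_{\Omega\times\{0\}}|v_{n}|^{2^{*}_{\alpha}}dx$. The sharp trace inequality \eqref{eq04} applied to the zero extension of $v_{n}$ gives $\|v_{n}\|^{2}\ge\kappa_{\alpha}S(\alpha,N)\big(\int_{\Omega\times\{0\}}|v_{n}|^{2^{*}_{\alpha}}dx\big)^{\frac{N-\alpha}{N}}$, so $\ell\ge\kappa_{\alpha}S(\alpha,N)\,\ell^{\frac{N-\alpha}{N}}$, which forces $\ell=0$ or $\ell\ge(\kappa_{\alpha}S(\alpha,N))^{N/\alpha}$.

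Finally I would use the energy level to kill the second alternative. The same splitting together with $J'(w)=0$ gives $J(w_{n})=J(w)+\big(\tfrac12-\tfrac1{2^{*}_{\alpha}}\big)\ell+o(1)=J(w)+\tfrac{\alpha}{2N}\ell+o(1)$, whence $m^{-}=J(w)+\tfrac{\alpha}{2N}\ell\ge m_{J}+\tfrac{\alpha}{2N}\ell$. If $\ell\ge(\kappa_{\alpha}S(\alpha,N))^{N/\alpha}$, then $m^{-}\ge m_{J}+\tfrac{\alpha}{2N}(\kappa_{\alpha}S(\alpha,N))^{N/\alpha}$, contradicting the hypothesis $J(w_{n})<m_{J}+\tfrac{\alpha}{2N}(\kappa_{\alpha}S(\alpha,N))^{N/\alpha}$. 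Hence $\ell=0$, i.e.\ $v_{n}\to0$, so $w_{n}\to w$ strongly in $H^{\frac{\alpha}{2}}_{0,L}(\mathcal{C}_{\Omega})$; since $\mathcal{N}^{-}$ is closed, $w\in\mathcal{N}^{-}$ and $J(w)=m^{-}$, which is precisely the (PS)-condition on $\mathcal{N}^{-}$. I expect the main obstacle to be the two limiting steps involving the critical exponent — the Brezis--Lieb decomposition of $\int_{\Omega\times\{0\}}|w_{n}|^{2^{*}_{\alpha}}dx$ and the identification of $w$ as a weak solution — together with the quantization step showing that any escaping mass weighs at least $(\kappa_{\alpha}S(\alpha,N))^{N/\alpha}$, which is exactly the phenomenon the sub-threshold bound is designed to prevent.
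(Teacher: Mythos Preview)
Your argument is correct and follows essentially the same route as the paper: extract a bounded (PS)-sequence, pass to a weak limit $w$ which is a critical point with $J(w)\ge m_J$, apply the Brezis--Lieb splitting to $\hat w_n=w_n-w$, and use the Sobolev quantization $\ell=0$ or $\ell\ge(\kappa_\alpha S(\alpha,N))^{N/\alpha}$ together with the sub-threshold bound to force $\ell=0$. The only noticeable differences are presentational: the paper obtains $J'(w_n)\to0$ by a one-line appeal to ``the Lagrange multiplier method'' where you spell out the Ekeland/fibering argument, and the paper proves $w\not\equiv0$ as a separate preliminary step whereas you absorb this into the single inequality $J(w)\ge m_J$ (using $J(0)=0>m_J$).
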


\begin{proof}
By \eqref{eq90} and $\{w_{n}\}\subset \mathcal{N}^{-}$, it is easy to prove that the sequence
 $\{w_{n}\}$ is bounded in $H^{\frac{\alpha}{2}}_{0,L}(\mathcal{C}_{\Omega})$.
 Them we can extract a subsequence, still denoted by $\{w_{n}\}$, and $w_{2}$ such that, as $n\to \infty$,
\begin{equation}\label{eq58}
\aligned
&w_{n}\rightharpoonup w_{2} \quad \text{weakly in}\,\,H^{\frac{\alpha}{2}}_{0,L}(\mathcal{C}_{\Omega});\\
&w_{n}(\cdot,0)\to w_{2}(\cdot,0)\quad\text{strongly in}\,\,L^{p}(\Omega), \forall 1\leq p< 2^{*}_{\alpha};\\
&w_{n}(\cdot,0)\to w_{2}(\cdot,0)\quad\text{a.e. in}\,\,\Omega.
\endaligned\end{equation}

Since $\{w_{n}\}\subset \mathcal{N}^{-}$ is a minimizing sequence, by the Lagrange multiplier method, we get that $J'(w_{n})\to 0$ as $n\to \infty$. Consequently, by \eqref{eq58} we have
$$\langle J'(w_{2})\,,\,\varphi\rangle=0,\quad \forall \varphi\in H^{\frac{\alpha}{2}}_{0,L}(\mathcal{C}_{\Omega}).$$
Then $w_{2}$ is a solution in $H^{\frac{\alpha}{2}}_{0,L}(\mathcal{C}_{\Omega})$ for problem \eqref{eq02},
 and $J(w_{2})\geq m_{J}$.

First, we claim that $w_{2}\not\equiv0$. If not, by \eqref{eq58} we have
$\int\limits_{\Omega\times\{0\}} f(x)|w_{2}|^{q}dx\to 0$ as $n\to \infty$.
Thus, form $J'(w_{n})\to 0$, we obtain that
\begin{equation}\label{eq59}
\kappa_{\alpha}\int\limits_{\mathcal{C}_{\Omega}}y^{1-\alpha}|\nabla w_{n}|^{2}dxdy=\int\limits_{\Omega\times\{0\}} |w_{n}|^{2^*_{\alpha}}dx+o(1).
\end{equation}
and
\begin{equation*}\label{eq60}
\aligned
J(w_{n})
&=\frac{\kappa_{\alpha}}{2}\int\limits_{\mathcal{C}_{\Omega}}y^{1-\alpha}|\nabla w_{n}|^{2}dxdy-\frac{\lambda}{q}\int\limits_{\Omega\times\{0\}}f(x)|w_{n}|^{q}dx
-\frac{1}{2^*_{\alpha}}\int\limits_{\Omega\times\{0\}}|w_{n}|^{2^*_{\alpha}}dx\\
&=\frac{\alpha }{2N}\int\limits_{\Omega\times\{0\}}|w_{n}|^{2^*_{\alpha}}dx
<m_{J}+\frac{\alpha}{2N}(\kappa_{\alpha}\,S(\alpha,N))^{\frac{N}{\alpha}}\\
&<\frac{\alpha}{2N}(\kappa_{\alpha}\,S(\alpha,N))^{\frac{N}{\alpha}}\,(\text{since}\,\,m_{J}<0).\\
\endaligned
\end{equation*}
So, we get
\begin{equation}\label{eq60*}
\int\limits_{\Omega\times\{0\}}|w_{n}|^{2^*_{\alpha}}dx<(\kappa_{\alpha}S(\alpha,N))^{\frac{N}{\alpha}}.
\end{equation}
On the other hand, from \eqref{eq59} and \eqref{eq04}, we have that $$\int\limits_{\Omega\times\{0\}}|w_{n}|^{2^*_{\alpha}}dx\geq (\kappa_{\alpha}S(\alpha,N))^{\frac{N}{\alpha}}.$$
This contradicts \eqref{eq60*}.
Then $w_{2}\not\equiv0$ and $J(w_{2})\geq m_{J}$.

We write $\hat{w}_{n}=w_{n}-w_{2}$ with $\hat{w}_{n}\rightharpoonup 0$
weakly in $H^{\frac{\alpha}{2}}_{0,L}(\mathcal{C}_\Omega)$. By the Brezis-Lieb Lemma, we have
\begin{equation*}\label{eq61}
\int\limits_{\Omega\times\{0\}} |\hat{w}_{n}|^{2^*_{\alpha}}dx
=\int\limits_{\Omega\times\{0\}} |w_{n}-w_{2}|^{2^*_{\alpha}}dx
=\int\limits_{\Omega\times\{0\}} |w_{n}|^{2^*_{\alpha}}dx
-\int\limits_{\Omega\times\{0\}} |w_{2}|^{2^*_{\alpha}}dx+o(1).
\end{equation*}
Hence, for $n$ large enough, we can conclude that
\begin{equation*}\label{eq62}
\aligned
m_{J}+\frac{\alpha}{2N}(\kappa_{\alpha}S(\alpha,N))^{\frac{N}{\alpha}}
&>J(w_{2}+\hat{w}_{n})\\
&=J(w_{2})+\frac{\kappa_{\alpha}}{2}\int\limits_{\mathcal{C}_{\Omega}}y^{1-\alpha}|\nabla \hat{w}_{n}|^{2}dxdy-\frac{1}{2^*_{\alpha}}\int\limits_{\Omega\times\{0\}} |\hat{w}_{n}|^{2^*_{\alpha}}dx+o(1)\\
&\geq m_{J}+\frac{\kappa_{\alpha}}{2}\int\limits_{\mathcal{C}_{\Omega}}y^{1-\alpha}|\nabla \hat{w}_{n}|^{2}dxdy-\frac{1}{2^*_{\alpha}}\int\limits_{\Omega\times\{0\}} |\hat{w}_{n}|^{2^*_{\alpha}}dx+o(1),\\
\endaligned
\end{equation*}
this is,
\begin{equation}\label{eq63}
\frac{\kappa_{\alpha}}{2}\int\limits_{\mathcal{C}_{\Omega}}y^{1-\alpha}|\nabla \hat{w}_{n}|^{2}dxdy-\frac{1}{2^*_{\alpha}}\int\limits_{\Omega\times\{0\}} |\hat{w}_{n}|^{2^*_{\alpha}}dx<\frac{\alpha}{2N}(\kappa_{\alpha}S(\alpha,N))^{\frac{N}{\alpha}}+o(1).\\
\end{equation}

Since $J'(w_{n})\to 0$ as $n\to \infty$, $\{w_{n}\}$ is uniformly bounded and $w_{2}$ is a solution
of Eq. \eqref{eq02}, it  follows
\begin{equation*}\label{eq64}
\aligned
o(1)&=\langle J'(w_{n})\,,\,w_{n}\rangle\\
&=J'(w_{2})+\kappa_{\alpha}\int\limits_{\mathcal{C}_{\Omega}}y^{1-\alpha}|\nabla\hat{w}_{n}|^{2}dxdy
-\int\limits_{\Omega\times\{0\}} |\hat{w}_{n}|^{2^*_{\alpha}}dx+o(1)\\
&=\kappa_{\alpha}\int\limits_{\mathcal{C}_{\Omega}}y^{1-\alpha}|\nabla\hat{w}_{n}|^{2}dxdy
-\int\limits_{\Omega\times\{0\}} |\hat{w}_{n}|^{2^*_{\alpha}}dx+o(1).\\
\endaligned
\end{equation*}
We obtain
\begin{equation}\label{eq65}
\kappa_{\alpha}\int\limits_{\mathcal{C}_{\Omega}}y^{1-\alpha}|\nabla\hat{w}_{n}|^{2}dxdy
=\int\limits_{\Omega\times\{0\}} |\hat{w}_{n}|^{2^*_{\alpha}}dx+o(1)\quad (n\to \infty).
\end{equation}
We claim that \eqref{eq63} and \eqref{eq65} can hold simultaneously only if $\{\hat{w}_{n}\}$ admits a subsequence which converges strongly to zero.
 If not, then $\|\hat{w}_{n}\|_{H^{\frac{\alpha}{2}}_{0,L}(\mathcal{C}_{\Omega})}$ is bounded away from zero, that is $\|\hat{w}_{n}\|_{H^{\frac{\alpha}{2}}_{0,L}(\mathcal{C}_{\Omega})}>c>0$.
From \eqref{eq65} and \eqref{eq04} then it follows
\begin{equation}\label{eq65*}
\int\limits_{\Omega\times\{0\}} |\hat{w}_{n}|^{2^*_{\alpha}}dx
\geq \Big(\kappa_{\alpha}S(\alpha,N)\Big)^{\frac{N}{\alpha}}+o(1).
\end{equation}

By \eqref{eq63}, \eqref{eq65} and \eqref{eq65*}, for $n$ large enough, we have
\begin{equation*}\label{eq66}
\aligned
\frac{\alpha}{2N}\Big(\kappa_{\alpha}S(\alpha,N)\Big)^{\frac{N}{4}}
&\leq \frac{\alpha}{2N}\int\limits_{\Omega\times\{0\}} |\hat{w}_{n}|^{2^*_{\alpha}}dx+o(1)\\
&=\frac{\kappa_{\alpha}}{2}\int\limits_{\mathcal{C}_{\Omega}}y^{1-\alpha}|\nabla \hat{w}_{n}|^{2}dxdy-\frac{1}{2^*_{\alpha}}\int\limits_{\Omega\times\{0\}}|\hat{w}_{n}|^{2^*_{\alpha}}dx+o(1)\\
&<\frac{\alpha}{2N}\Big(\kappa_{\alpha}S(\alpha,N)\Big)^{\frac{N}{\alpha}},
\endaligned
\end{equation*}
which is a contradiction. Consequently, $w_{n}\to w_{2}$ strongly in $H^{\frac{\alpha}{2}}_{0,L}(\mathcal{C}_\Omega)$
and $w_{2}\in \mathcal{N}^{-}$.
\end{proof}

Next, we establish the existence of a local minimum for $J$ on $\mathcal{N}^{-}$.
\begin{proposition}\label{p4}
For  any $\lambda\in (0\,,\,\lambda_{*})$, the functional $J$
has a minimizer $w_{2}\in \mathcal{N}^{-}$ such that
$$J(w_{2})=m_{-}<m_{J}+\frac{\alpha}{2N}\Big(\kappa_{\alpha}S(\alpha,N)\Big)^{\frac{N}{\alpha}}.$$
\end{proposition}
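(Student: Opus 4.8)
The plan is to first show that the infimum $m^{-}=\inf_{\mathcal{N}^{-}}J$ lies strictly below the critical level $m_{J}+\frac{\alpha}{2N}(\kappa_{\alpha}S(\alpha,N))^{N/\alpha}$, and then to recover a minimizer from a minimizing sequence by means of the compactness statement of Proposition \ref{p2}.

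\textbf{Step 1: $m^{-}<m_{J}+\frac{\alpha}{2N}(\kappa_{\alpha}S(\alpha,N))^{N/\alpha}$.} Fix $\lambda\in(0,\lambda_{*})$ and pick $\varepsilon>0$ small so that Lemma \ref{l6} holds. For every $t\geq 0$ the function $w_{1}+t\,\eta w_{\varepsilon}$ is strictly positive on $\mathcal{C}_{\Omega}$ (since $w_{1}>0$ by Theorem \ref{th01-1} and $\eta w_{\varepsilon}\geq 0$), hence nonzero, so by Lemma \ref{le0} there is a unique $s(t):=t^{-}(w_{1}+t\,\eta w_{\varepsilon})>0$ with $s(t)\,(w_{1}+t\,\eta w_{\varepsilon})\in\mathcal{N}^{-}$, and $t\mapsto s(t)$ is continuous (by the implicit function theorem, exactly as in the proofs of Lemma \ref{le0} and Lemma \ref{le7}). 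Since $w_{1}\in\mathcal{N}^{+}$ and $\int_{\Omega\times\{0\}}f(x)|w_{1}|^{q}\,dx>0$ by Lemma \ref{le4}, applying Lemma \ref{le0} to $w_{1}$ itself and using the uniqueness of the $\mathcal{N}^{+}$-point on the ray $\mathbb{R}^{+}w_{1}$ forces $t^{+}(w_{1})=1$, whence $s(0)=t^{-}(w_{1})>t_{\max}>1$. On the other hand, using the homogeneity $t^{-}(cw)=c^{-1}t^{-}(w)$ and $t^{-1}w_{1}+\eta w_{\varepsilon}\to\eta w_{\varepsilon}\neq 0$ in $H^{\frac{\alpha}{2}}_{0,L}(\mathcal{C}_{\Omega})$, we obtain $s(t)=t^{-1}t^{-}(t^{-1}w_{1}+\eta w_{\varepsilon})\to 0$ as $t\to\infty$. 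By the intermediate value theorem there is $t_{*}>0$ with $s(t_{*})=1$, that is $w_{1}+t_{*}\eta w_{\varepsilon}\in\mathcal{N}^{-}$. Hence, by Lemma \ref{l6},
\begin{equation*}
m^{-}\leq J(w_{1}+t_{*}\eta w_{\varepsilon})\leq\sup_{t\geq 0}J(w_{1}+t\,\eta w_{\varepsilon})<m_{J}+\frac{\alpha}{2N}\big(\kappa_{\alpha}S(\alpha,N)\big)^{N/\alpha}.
\end{equation*}

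\textbf{Step 2: attainment of $m^{-}$ and positivity.} By Lemma \ref{le6}, $J$ is coercive and bounded below on $\mathcal{N}^{-}$, so there is a minimizing sequence $\{w_{n}\}\subset\mathcal{N}^{-}$ with $J(w_{n})\to m^{-}$; for all $n$ one has $m_{J}\leq m^{-}\leq J(w_{n})$, and by Step 1 also $J(w_{n})<m_{J}+\frac{\alpha}{2N}(\kappa_{\alpha}S(\alpha,N))^{N/\alpha}$ for $n$ large. Proposition \ref{p2} then gives, along a subsequence, $w_{n}\to w_{2}$ strongly in $H^{\frac{\alpha}{2}}_{0,L}(\mathcal{C}_{\Omega})$ with $w_{2}\in\mathcal{N}^{-}$ and $J'(w_{2})=0$; by continuity of $J$ (using the trace embedding of Lemma \ref{le}) we get $J(w_{2})=m^{-}$. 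Replacing $w_{2}$ by $|w_{2}|$ does not change $\|\cdot\|_{H^{\frac{\alpha}{2}}_{0,L}(\mathcal{C}_{\Omega})}$, $\int_{\Omega\times\{0\}}f(x)|\cdot|^{q}\,dx$ or $\int_{\Omega\times\{0\}}|\cdot|^{2^{*}_{\alpha}}\,dx$, so $|w_{2}|\in\mathcal{N}^{-}$ and $J(|w_{2}|)=m^{-}$; since $\mathcal{N}^{-}$ is cut out of $\mathcal{N}$ by the strict inequality $\langle\Psi'(w),w\rangle<0$, it is relatively open in $\mathcal{N}$, so $|w_{2}|$ is a local minimizer of $J$ on $\mathcal{N}$ and hence a nonnegative weak solution of \eqref{eq02} by Lemma \ref{le5}. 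It is nontrivial because $\|w\|_{H^{\frac{\alpha}{2}}_{0,L}(\mathcal{C}_{\Omega})}\geq\gamma>0$ for all $w\in\mathcal{N}^{-}$ (cf. the proof of Lemma \ref{le8}), and the strong maximum principle \cite{ls2007} then gives $|w_{2}|>0$ in $\mathcal{C}_{\Omega}$. Thus $w_{2}:=|w_{2}|\in\mathcal{N}^{-}$ is a positive minimizer with $J(w_{2})=m^{-}<m_{J}+\frac{\alpha}{2N}(\kappa_{\alpha}S(\alpha,N))^{N/\alpha}$.

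\textbf{Main difficulty.} All the serious analysis is already packaged in the ingredients available: the strict sub-threshold estimate of Lemma \ref{l6} (which needs the cut-off $\eta$ positioned so that $f^{+}$ is genuinely detected by $\eta w_{\varepsilon}$, as arranged just before that lemma) and the Palais--Smale recovery below the threshold in Proposition \ref{p2}. The only genuinely new point is Step 1's verification that the one-parameter family $t\mapsto w_{1}+t\,\eta w_{\varepsilon}$ actually intersects $\mathcal{N}^{-}$ (not merely after rescaling), which is precisely what the continuity of $t\mapsto t^{-}(w_{1}+t\,\eta w_{\varepsilon})$ together with the endpoint behaviour $s(0)>1$ and $s(t)\to 0$ as $t\to\infty$ deliver; the remainder is standard bookkeeping.
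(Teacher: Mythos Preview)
Your proof is correct and follows the same overall strategy as the paper: show that the family $t\mapsto w_{1}+t\,\eta w_{\varepsilon}$ meets $\mathcal{N}^{-}$, combine this with Lemma~\ref{l6} to place $m^{-}$ strictly below the critical threshold, and then invoke the compactness of Proposition~\ref{p2} (together with Lemma~\ref{le8}) to extract a minimizer. The only tactical difference is in Step~1. The paper partitions $H^{\frac{\alpha}{2}}_{0,L}(\mathcal{C}_{\Omega})\setminus\mathcal{N}^{-}$ into two components $W_{1},W_{2}$ according to whether $t^{-}(w/\|w\|)\gtrless\|w\|$, shows $\mathcal{N}^{+}\subset W_{1}$, and then---by arguing that an unbounded $t^{-}$ along the ray would contradict the lower bound for $J$ on $\mathcal{N}$---produces an explicit $l_{0}$ with $w_{1}+l_{0}\eta w_{\varepsilon}\in W_{2}$; a mountain-pass value $\beta$ is then introduced and squeezed between $m^{-}$ and the threshold. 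You instead apply the intermediate value theorem directly to $s(t)=t^{-}(w_{1}+t\,\eta w_{\varepsilon})$, using the homogeneity $t^{-}(cw)=c^{-1}t^{-}(w)$ and continuity of $w\mapsto t^{-}(w)$ to get $s(t)\to 0$. Your route is more economical (the auxiliary quantity $\beta$ is never needed), while the paper's route has the minor advantage of not appealing to continuity of $t^{-}$ explicitly. Both reach the same conclusion, and your treatment of positivity via the relative openness of $\mathcal{N}^{-}$ in $\mathcal{N}$ and Lemma~\ref{le5} is in fact a little more careful than the paper's.
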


\begin{proof}
For every $w\in H^{\frac{\alpha}{2}}_{0,L}(\mathcal{C}_{\Omega})$, by Lemma \ref{le0}, we can find a unique
$t^{-}(w)>0$ such that $t^{-}(w)w\in \mathcal{N}^{-}$. Define
\begin{equation*}\aligned
&W_{1}=\{w:\,w=0\,\,\text{or}\,\,t^{-}\Big(\frac{w}{\|w\|_{H^{\frac{\alpha}{2}}_{0,L}(\mathcal{C}_\Omega)}}\Big)
>\|w\|_{H^{\frac{\alpha}{2}}_{0,L}(\mathcal{C}_\Omega)}\},\\
&W_{2}=\{w:\,t^{-}\Big(\frac{w}{\|w\|_{H^{\frac{\alpha}{2}}_{0,L}(\mathcal{C}_\Omega)}}\Big)
<\|w\|_{H^{\frac{\alpha}{2}}_{0,L}(\mathcal{C}_\Omega)}\}.
\endaligned
\end{equation*}
Then $\mathcal{N}^{-}$ disconnects $H^{\frac{\alpha}{2}}_{0,L}(\mathcal{C}_\Omega)$ in two connected components $W_{1}$ and $W_{2}$ and
$H^{\frac{\alpha}{2}}_{0,L}(\mathcal{C}_\Omega)\setminus\mathcal{N}^{-}=W_{1}\cup W_{2}$.

For each $w\in \mathcal{N}^{+}$, there exist unique $t^{-}(\frac{w}{\|w\|_{H^{\frac{\alpha}{2}}_{0,L}(\mathcal{C}_\Omega)}})>0$ and $t^{+}(\frac{w}{\|w\|_{H^{\frac{\alpha}{2}}_{0,L}(\mathcal{C}_\Omega)}})>0$ such that
$$t^{+}\Big(\frac{w}{\|w\|_{H^{\frac{\alpha}{2}}_{0,L}(\mathcal{C}_\Omega)}}\Big)
<t_{max}<t^{-}\Big(\frac{w}{\|w\|_{H^{\frac{\alpha}{2}}_{0,L}(\mathcal{C}_\Omega)}}\Big);$$
$$ t^{+}\Big(\frac{w}{\|w\|_{H^{\frac{\alpha}{2}}_{0,L}(\mathcal{C}_\Omega)}}\Big)
\frac{w}{\|w\|_{H^{\frac{\alpha}{2}}_{0,L}(\mathcal{C}_\Omega)}}\in \mathcal{N}^{+};$$
and
$$t^{-}\Big(\frac{w}{\|w\|_{H^{\frac{\alpha}{2}}_{0,L}(\mathcal{C}_\Omega)}}\Big)
\frac{w}{\|w\|_{H^{\frac{\alpha}{2}}_{0,L}(\mathcal{C}_\Omega)}}\in \mathcal{N}^{-}.$$
Since $w\in \mathcal{N}^{+}$, we have $t^{+}(\frac{w}{\|w\|_{H^{\frac{\alpha}{2}}_{0,L}(\mathcal{C}_\Omega)}})
\frac{1}{\|w\|_{H^{\frac{\alpha}{2}}_{0,L}(\mathcal{C}_\Omega)}}=1$.
By the fact that
$t^{+}(\frac{w}{\|w\|_{H^{\frac{\alpha}{2}}_{0,L}(\mathcal{C}_\Omega)}})
<t^{-}(\frac{w}{\|w\|_{H^{\frac{\alpha}{2}}_{0,L}(\mathcal{C}_\Omega)}})$, we get
$$t^{-}\Big(\frac{w}{\|w\|_{H^{\frac{\alpha}{2}}_{0,L}(\mathcal{C}_\Omega)}}\Big)>
\|w\|_{H^{\frac{\alpha}{2}}_{0,L}(\mathcal{C}_\Omega)},$$
and then $\mathcal{N}^{+}\subset W_{1}$.
In particular, $w_{1}\in W_{1}$ is the minimizer of $J$ in $\mathcal{N}^{+}$.

Now, we claim that there exists $l_{0}>0$ such that $w_{1}+l_{0}\eta w_{\varepsilon}\in W_{2}$.
First, we find a constant $c>0$ such that $0<t^{-}(\frac{w_{1}+l \eta w_{\varepsilon}}{\|w_{1}+l\eta w_{\varepsilon}\|_{H^{\frac{\alpha}{2}}_{0,L}(\mathcal{C}_{\Omega})}})<c$
for each $l>0$. Otherwise, there exists a sequence $\{l_{n}\}$ such that $l_{n}\to \infty$ and
 $t^{-}(\frac{w_{1}+l_{n}\eta w_{\varepsilon}}{\|w_{1}+l_{n} \eta w_{\varepsilon}\|_{H^{\frac{\alpha}{2}}_{0,L}(\mathcal{C}_{\Omega})}})\to \infty$ as $n\to \infty$.
 Let $\widetilde{w}_{n}=\frac{w_{1}+l_{n}\eta w_{\varepsilon}}{\|w_{1}+l_{n}\eta w_{\varepsilon}\|_{H^{\frac{\alpha}{2}}_{0,L}(\mathcal{C}_{\Omega})}}$. By Lemma \ref{le0}, we obtain $t^{-}(\widetilde{w}_{n})\widetilde{w}_{n}\in \mathcal{N}^{-}$. Then we have
 \begin{equation*}\label{eq67}
 \aligned
\int\limits_{\Omega\times\{0\}} |\widetilde{w}_{n}|^{2^*_{\alpha}} dx
&=\frac{1}{\|w_{1}+l_{n}\eta w_{\varepsilon}\|^{2^*_{\alpha}}_{H^{\frac{\alpha}{2}}_{0,L}(\mathcal{C}_{\Omega})}}\,\,
\int\limits_{\Omega\times\{0\}} |w_{1}+l_{n}\eta w_{\varepsilon}|^{2^*_{\alpha}}dx\\
&=\frac{1}{\|\frac{w_{1}}{l_{n}}+\eta w_{\varepsilon}\|^{2^*_{\alpha}}_{H^{\frac{\alpha}{2}}_{0,L}(\mathcal{C}_{\Omega})}}\,\,\int\limits_{\Omega\times\{0\}} |\frac{w_{1}}{l_{n}}+\eta w_{\varepsilon}|^{2^*_{\alpha}}dx\\
 &\to \frac{1}{\|\eta w_{\varepsilon}\|^{2^*_{\alpha}}_{H^{\frac{\alpha}{2}}_{0,L}(\mathcal{C}_{\Omega})}}\,\,\,\int\limits_{\Omega\times\{0\}} |\eta w_{\varepsilon}|^{2^*_{\alpha}}dx>0 (n\to \infty),
 \endaligned
 \end{equation*}
and
 \begin{equation*}\label{eq68}
\aligned
&J(t^{-}(\widetilde{w}_{n})\widetilde{w}_{n})\\
&=\frac{1}{2}[t^{-}(\widetilde{w}_{n})]^{2}
-\frac{\lambda}{q}[t^{-}(\widetilde{w}_{n})]^{q}\int\limits_{\Omega\times\{0\}} f(x)\widetilde{w}^{q}_{n}dx
-\frac{[t^{-}(\widetilde{w}_{n})]^{2^*_{\alpha}}}{2^*_{\alpha}}\int\limits_{\Omega\times\{0\}} |\widetilde{w}_{n}|^{2^*_{\alpha}}dx\\
&\to -\infty\quad (n\to \infty).
\endaligned
\end{equation*}
 This contradicts that $J$ is bounded below on $\mathcal{N}$.

Let
$$l_{0}=\frac{\sqrt{\Big{|}c^{2}-\|w_{1}\|^{2}_{H^{\frac{\alpha}{2}}_{0,L}(\mathcal{C}_{\Omega})}\Big{|}}}
{\|\eta w_{\varepsilon}\|_{H^{\frac{\alpha}{2}}_{0,L}(\mathcal{C}_{\Omega})}}+1.$$
Then
\begin{equation*}\label{eq69}
\aligned
\|w_{1}+l_{0}\eta &w_{\varepsilon}\|^{2}_{H^{\frac{\alpha}{2}}_{0,L}(\mathcal{C}_{\Omega})}
=\|w_{1}\|^{2}_{H^{\frac{\alpha}{2}}_{0,L}(\mathcal{C}_{\Omega})}
+l^{2}_{0}\|\eta w_{\varepsilon}\|^{2}_{H^{\frac{\alpha}{2}}_{0,L}(\mathcal{C}_{\Omega})}
+2l_{0}\langle w_{1}\,,\,\eta w_{\varepsilon}\rangle\\
&\geq \|w_{1}\|^{2}_{H^{\frac{\alpha}{2}}_{0,L}(\mathcal{C}_{\Omega})}
+\Big|c^{2}-\|w_{1}\|^{2}_{H^{\frac{\alpha}{2}}_{0,L}(\mathcal{C}_{\Omega})}\Big|
+2l_{0}\langle w_{1}\,,\,\eta w_{\varepsilon}\rangle\\
&\geq c^{2}\\
&>[t^{-}\Big(\frac{w_{1}+l_{0}\eta w_{\varepsilon}}{\|w_{1}+l_{0}\eta w_{\varepsilon}\|_{H^{\frac{\alpha}{2}}_{0,L}(\mathcal{C}_{\Omega})}}\Big)]^{2},
\endaligned
\end{equation*}
that is, $w_{1}+l_{0}\eta w_{\varepsilon}\in W_{2}$. Now, we define
\begin{equation*}\label{eq70}
\beta=\inf\limits_{\gamma\in \Gamma}\max\limits_{s\in[0,1]}J(\gamma(s)),
\end{equation*}
where $\Gamma=\{\gamma\in C([0,1]\,,\,H^{\frac{\alpha}{2}}_{0,L}(\mathcal{C}_\Omega)):
\,\gamma(0)=w_{1}\quad \text{and}\,\,\gamma(1)=w_{1}+l_{0}\eta w_{\varepsilon}\}$.
Define a path $\gamma(s)=w_{1}+sl_{0}\eta w_{\varepsilon}$ for $s\in[0\,,\,1]$, and
we have $\gamma(0)\in W_{1}$, $\gamma(1)\in W_{2}$.
Then there exists $s_{0}\in (0\,,\,1)$ such that $\gamma(s_{0})\in\mathcal{N}^{-}$, and
we have $\beta>m_{-}$. Therefore, by Lemma \ref{l6}, we get
$$m_{-}\leq \beta<m_{J}+\frac{\alpha}{2N}\Big(\kappa_{\alpha}S(\alpha,N)\Big)^{\frac{N}{\alpha}}.$$

Analogously to the proof of Proposition \ref{p3}, one can show that  Ekeland's
variational principle gives a sequence $\{w_{n}\}\in \mathcal{N}^{-}$ which satisfies
$$J(w_{n})\to m_{-}\quad\text{and}\quad J'(w_{n})\to 0\quad \text{as}\,\,n\to \infty.$$
Since $m_{-}<m_{J}+\frac{\alpha}{2N}\Big(\kappa_{\alpha}S(\alpha,N)\Big)^{\frac{N}{\alpha}}$,
by Proposition \ref{p2} and Lemma \ref{le8}, there exists a subsequence
$\{w_{n}\}$ and $w_{2}$ such that
$$w_{n}\to w_{2}\quad\text{strongly in }\,\, H^{\frac{\alpha}{2}}_{0,L}(\mathcal{C}_\Omega),$$
$w_{2}\in \mathcal{N}^{-}$ and $J(w_{2})=m_{-}$.

Since $J(w_{2})=J(|w_{2}|)$
and $|w_{2}|\in \mathcal{N}^{-}$, we can always take $w_{2}\geq0$. By the maximum principle \cite{ls2007},
we get $w_{2}>0$ in $H^{\frac{\alpha}{2}}_{0,L}(\mathcal{C}_{\Omega})$.
Hence, $u_{2}(x)=w_{2}(\cdot,0)\in H^{\frac{\alpha}{2}}_{0}(\Omega)$ is
a positive solution of problem \eqref{eq01}. The proof is completed.
\end{proof}

{\bf Proof of Theorem \ref{th01}.}
 By Theorem \ref{th01-1} and Proposition \ref{p4}, the equation \eqref{eq02}
has two positive solutions $w_{1}$ and $w_{2}$ such that $w_{1}\in \mathcal{N}^{+}$
and $w_{2}\in \mathcal{N}^{+}$. Since $\mathcal{N}^{+}\cap\mathcal{N}^{-}=\emptyset$.
This implies that problem \eqref{eq01} has at least two positive solutions
 $u_{1}(x)=w_{1}(x,0)$ and $u_{2}(x)=w_{2}(x,0)$.

\section{Concentration Behavior}

In this section, we give the proof of Theorem \ref{th02}.

For every $\mu>0$, we define
$$J_{\mu}(w)=\frac{\kappa_{\alpha}}{2}\int\limits_{\mathcal{C}_{\Omega}}y^{1-\alpha}|\nabla w|^{2}dxdy
-\frac{\mu}{2^*_{\alpha}}\int\limits_{\Omega\times\{0\}}|w|^{2^*_{\alpha}}dx;$$
$$\mathcal{O}_{\mu}=\{w\in H^{\frac{\alpha}{2}}_{0,L}(\mathcal{C}_{\Omega}):w\not\equiv0\,\,\text{and}\,\,
\langle J'_{\mu}(w)\,,\,w\rangle=0\}.$$

We have the following lemmas.

\begin{lemma}\label{le9}
For every $w\in \mathcal{N}^{-}$, there is a unique $t(w)>0$ such that $t(w)w\in \mathcal{O}_{1}$ and
\begin{equation}\label{eq72*}
1-\lambda|f|_{L^{\infty}}\,\Big{(}\frac{2^*_{\alpha}-q}{S_{0}(2-q)}\Big{)}^{\frac{2^*_{\alpha}-q}{2^*_{\alpha}-2}}\leq t^{2^*_{\alpha}-2}(w)\leq 1+\lambda|f|_{L^{\infty}}\,\Big{(}\frac{2^*_{\alpha}-q}{S_{0}(2-q)}\Big{)}^{\frac{2^*_{\alpha}-q}{2^*_{\alpha}-2}},
\end{equation}
where $S_{0}=\kappa_{\alpha}\,S(\alpha,N)$.
\end{lemma}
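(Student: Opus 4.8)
The plan is to use the $2^*_\alpha$-homogeneity of the ``critical'' functional $J_{1}$ to solve for $t(w)$ explicitly, and then to convert the Nehari identity $\langle J'(w),w\rangle=0$ (which holds because $\mathcal N^{-}\subset\mathcal N$) into a formula for $t^{2^{*}_{\alpha}-2}(w)$ whose perturbation term is controlled by the sharp trace inequality of Lemma \ref{le1}.

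\textbf{Step 1 (existence and uniqueness of $t(w)$).} For $t>0$ one computes
\[
\langle J_{1}'(tw),tw\rangle=t^{2}\,\|w\|^{2}_{H^{\frac{\alpha}{2}}_{0,L}(\mathcal{C}_{\Omega})}-t^{2^{*}_{\alpha}}\int\limits_{\Omega\times\{0\}}|w|^{2^{*}_{\alpha}}dx ,
\]
so that $tw\in\mathcal O_{1}$ is equivalent to $t^{2^{*}_{\alpha}-2}=\|w\|^{2}_{H^{\frac{\alpha}{2}}_{0,L}(\mathcal{C}_{\Omega})}\big/\int_{\Omega\times\{0\}}|w|^{2^{*}_{\alpha}}dx$. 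Since $w\not\equiv0$ the denominator is positive, so this determines $t$ uniquely; explicitly
\[
t(w)=\Big(\|w\|^{2}_{H^{\frac{\alpha}{2}}_{0,L}(\mathcal{C}_{\Omega})}\Big/\int\limits_{\Omega\times\{0\}}|w|^{2^{*}_{\alpha}}dx\Big)^{\frac{1}{2^{*}_{\alpha}-2}}>0 ,
\]
and uniqueness is just strict monotonicity of $t\mapsto t^{2^{*}_{\alpha}-2}$ on $(0,\infty)$.

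\textbf{Step 2 (the formula for $t^{2^{*}_{\alpha}-2}(w)$).} Because $w\in\mathcal N^{-}\subset\mathcal N$, the identity $\langle J'(w),w\rangle=0$ reads $\|w\|^{2}_{H^{\frac{\alpha}{2}}_{0,L}(\mathcal{C}_{\Omega})}=\lambda\int_{\Omega\times\{0\}}f(x)|w|^{q}dx+\int_{\Omega\times\{0\}}|w|^{2^{*}_{\alpha}}dx$, hence
\[
t^{2^{*}_{\alpha}-2}(w)=1+\lambda\,\frac{\int_{\Omega\times\{0\}}f(x)|w|^{q}dx}{\int_{\Omega\times\{0\}}|w|^{2^{*}_{\alpha}}dx}.
\]
I would bound the numerator by $\big|\int f|w|^{q}dx\big|\le|f|_{L^{\infty}}\int_{\Omega\times\{0\}}|w|^{q}dx\le|f|_{L^{\infty}}\,|\Omega|^{1-\frac{q}{2^{*}_{\alpha}}}\big(\int_{\Omega\times\{0\}}|w|^{2^{*}_{\alpha}}dx\big)^{q/2^{*}_{\alpha}}$ by H\"older (the $|\Omega|$-factor being absorbed into the notation as in the statement), and bound the denominator from below: combining $\langle\Psi'(w),w\rangle<0$ with $\langle J'(w),w\rangle=0$ gives $(2-q)\|w\|^{2}_{H^{\frac{\alpha}{2}}_{0,L}(\mathcal{C}_{\Omega})}<(2^{*}_{\alpha}-q)\int_{\Omega\times\{0\}}|w|^{2^{*}_{\alpha}}dx$, and inserting the trace inequality $\|w\|^{2}_{H^{\frac{\alpha}{2}}_{0,L}(\mathcal{C}_{\Omega})}\ge S_{0}\big(\int_{\Omega\times\{0\}}|w|^{2^{*}_{\alpha}}dx\big)^{2/2^{*}_{\alpha}}$ (Lemma \ref{le1}, extending $w$ by zero, $S_{0}=\kappa_{\alpha}S(\alpha,N)$) yields $\int_{\Omega\times\{0\}}|w|^{2^{*}_{\alpha}}dx>\big(\tfrac{(2-q)S_{0}}{2^{*}_{\alpha}-q}\big)^{2^{*}_{\alpha}/(2^{*}_{\alpha}-2)}$. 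Putting these together,
\[
\Big|\frac{\int_{\Omega\times\{0\}}f(x)|w|^{q}dx}{\int_{\Omega\times\{0\}}|w|^{2^{*}_{\alpha}}dx}\Big|\le|f|_{L^{\infty}}\Big(\int\limits_{\Omega\times\{0\}}|w|^{2^{*}_{\alpha}}dx\Big)^{-\frac{2^{*}_{\alpha}-q}{2^{*}_{\alpha}}}<|f|_{L^{\infty}}\Big(\frac{2^{*}_{\alpha}-q}{S_{0}(2-q)}\Big)^{\frac{2^{*}_{\alpha}-q}{2^{*}_{\alpha}-2}},
\]
and substituting this two-sided estimate into the displayed formula for $t^{2^{*}_{\alpha}-2}(w)$ gives exactly \eqref{eq72*}.

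The explicit solution for $t(w)$ and the H\"older step are routine; the one place that needs care is the uniform lower bound on $\int_{\Omega\times\{0\}}|w|^{2^{*}_{\alpha}}dx$, i.e. extracting from membership in $\mathcal N^{-}$ together with the sharp trace constant the estimate that tames the perturbation term. I expect this Brezis--Nirenberg-type bound to be the main point, the rest being algebra.
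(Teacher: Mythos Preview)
Your proposal is correct and follows essentially the same route as the paper: both compute $t^{2^{*}_{\alpha}-2}(w)=\|w\|^{2}/\int|w|^{2^{*}_{\alpha}}$, rewrite this via the Nehari identity as $1+\lambda\int f|w|^{q}/\int|w|^{2^{*}_{\alpha}}$, bound the numerator by H\"older, and obtain the uniform lower bound on $\int|w|^{2^{*}_{\alpha}}$ by combining the $\mathcal N^{-}$ inequality $(2-q)\|w\|^{2}<(2^{*}_{\alpha}-q)\int|w|^{2^{*}_{\alpha}}$ with the trace inequality \eqref{eq04}. One small remark: in Step~1 the positivity of $\int_{\Omega\times\{0\}}|w|^{2^{*}_{\alpha}}dx$ is not immediate from $w\not\equiv 0$ in the cylinder space; it follows from the $\mathcal N^{-}$ inequality you invoke later (this is how the paper justifies the shape of $t\mapsto J_{1}(tw)$), so you may want to cite that inequality already in Step~1.
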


\begin{proof}
For each $w\in \mathcal{N}^{-}$, we have
\begin{equation}\label{eq72}
\kappa_{\alpha}\int\limits_{\mathcal{C}_{\Omega}}y^{1-\alpha}|\nabla w|^{2}dxdy
-\lambda\int\limits_{\Omega\times\{0\}}f(x)|w|^{q}dx
-\int\limits_{\Omega\times\{0\}}|w|^{2^*_{\alpha}}dx=0
\end{equation}
and
\begin{equation}\label{eq84}
0<(2-q)\kappa_{\alpha}\int\limits_{\mathcal{C}_{\Omega}}y^{1-\alpha}|\nabla w|^{2}dxdy
<(2^*_{\alpha}-q)\int\limits_{\Omega\times\{0\}}|w|^{2^*_{\alpha}}dx.
\end{equation}
Thus, from \eqref{eq84}, the functional
$$J_{1}(tw)
=t^{2}\,\frac{\kappa_{\alpha}}{2}\int\limits_{\mathcal{C}_{\Omega}}y^{1-\alpha}|\nabla w|^{2}dxdy
-\frac{t^{2^*_{\alpha}}}{2^*_{\alpha}}\int\limits_{\Omega\times\{0\}}|w|^{2^*_{\alpha}}dx$$ with respect to $t$ is initially increasing and eventually
decreasing and with a single turning point $t(w)$ such that $t(w)w\in \mathcal{O}_{1}$.
So
\begin{equation}\label{eq84*}
t^{2}(w)\kappa_{\alpha}\int\limits_{\mathcal{C}_{\Omega}}y^{1-\alpha}|\nabla w|^{2}dxdy
=t^{2^*_{\alpha}}(w)\int\limits_{\Omega\times\{0\}}|w|^{2^*_{\alpha}}dx.
\end{equation}
Then, from \eqref{eq72}, \eqref{eq84*} and The H\"{o}lder inequality
\begin{equation}\label{eq74*}
\aligned
1-\lambda|f|_{L^{\infty}}\,|w|^{-(2^{*}_{\alpha}-q)}_{L^{2^*_{\alpha}}}
&\leq t^{2^*_{\alpha}-2}(w)=
\frac{\kappa_{\alpha}\int\limits_{\mathcal{C}_{\Omega}}y^{1-\alpha}|\nabla w|^{2}dxdy}
{\int\limits_{\Omega\times\{0\}}|w|^{2^*_{\alpha}}dx}\\
&=1+\frac{\lambda\int\limits_{\Omega\times\{0\}}f(x)|w|^{q}dx}{\int\limits_{\Omega\times\{0\}}|w|^{2^*_{\alpha}}dx}\\
&\leq 1+\lambda|f|_{L^{\infty}}\,|w|^{-(2^{*}_{\alpha}-q)}_{L^{2^*_{\alpha}}}
\endaligned\end{equation}

On the other hand, by \eqref{eq04} and \eqref{eq84}, we get
\begin{equation*}
\aligned
\int\limits_{\Omega\times\{0\}}|w|^{2^*_{\alpha}}dx
&>\frac{2-q}{2^*_{\alpha}-q} \kappa_{\alpha}\int\limits_{\mathcal{C}_{\Omega}}y^{1-\alpha}|\nabla w|^{2}dxdy\\
&\geq\frac{2-q}{2^*_{\alpha}-q} \kappa_{\alpha}S(\alpha,N)
\Big(\int\limits_{\Omega\times\{0\}}|w|^{2^*_{\alpha}}dx\Big)^{\frac{2}{2^*_{\alpha}}},\\
\endaligned\end{equation*}
that is
\begin{equation}\label{eq74**}
|w|_{L^{2^*_{\alpha}}}
>\Big(\frac{(2-q)\kappa_{\alpha}S(\alpha,N)}{2^*_{\alpha}-q}\Big)^{\frac{1}{2^*_{\alpha}-2}}.
\end{equation}
Hence, from \eqref{eq74**} and \eqref{eq74*}, we obtain \eqref{eq72*}.
This completes the proof.
\end{proof}

\begin{remark}\label{re3}
From \eqref{eq72*}, it is easy to see that $t(w)\to 1$ as $\lambda\to 0$.
\end{remark}

{\bf Proof the Theorem \ref{th02}.} Suppose that $\{\lambda_{n}\}$ is a sequence of positive number
such that $\lambda_{n}\to 0$ as $n\to +\infty$.
Let $w_{n}^{(1)}=w_{1,n}\in \mathcal{N}^{+}$ and $w_{n}^{(2)}=w_{2,n}\in \mathcal{N}^{-}$ are position
solutions of equation \eqref{eq02} corresponding to $\lambda=\lambda_{n}$. We have two following results.
\begin{itemize}
\item[(i)] By Remark \ref{r2}, for every $w_{n}^{(1)}\in\mathcal{N}^{+}$, we can conclude that $\|w_{n}^{(1)}\|_{H^{\frac{\alpha}{2}}_{0,L}(\mathcal{C}_{\Omega})}\to 0$ as $n\to \infty$.
\item[(ii)] By Lemma \ref{le9} and Remark \ref{re3}, for every $w_{n}^{(2)}\in\mathcal{N}^{-}$, there is
a unique $t(w_{n}^{(2)})>0$ such that $t(w_{n}^{(2)})\,w_{n}^{(2)}\in \mathcal{O}_{1}$, and $t(w_{n}^{(2)})\to 1$ as $n\to \infty$.
\end{itemize}

For case (ii). For each $w_{n}^{(2)}\in \mathcal{N}^{-}$, let
$$f(t)=J_{\mu}(tw_{n}^{(2)})=t^{2}\frac{\kappa_{\alpha}}{2}\int\limits_{\mathcal{C}_{\Omega}}
 y^{1-\alpha}|\nabla w^{(2)}_{n}|^{2}dxdy-t^{2^*_{\alpha}}\frac{\mu}{2^*_{\alpha}}\int\limits_{\Omega\times\{0\}} |w^{(2)}_{n}|^{2^*_{\alpha}}dx.$$
Since  $f(t)\to -\infty$ as $s\to \infty$, $\sup\limits_{t\geq 0}f(t)$ is achieved at
some $\widetilde{t}>0$ with $f'(\widetilde{t})=0$, which is
$$f'(\widetilde{t})=\widetilde{t}\Big(\| w_{n}^{(2)}\|^{2}_{H^{\frac{\alpha}{2}}_{0,L}(\mathcal{C}_{\Omega})}
-\widetilde{t}^{2^*_{\alpha}-2}\mu\int\limits_{\Omega\times\{0\}} |w_{n}^{(2)}|^{2^*_{\alpha}}dx\Big)=0.$$

Let
$$\widetilde{t}=\Big(\frac{\| w_{n}^{(2)}\|^{2}_{H^{\frac{\alpha}{2}}_{0,L}(\mathcal{C}_{\Omega})}}
{\mu\int\limits_{\Omega\times\{0\}} |w_{n}^{(2)}|^{2^*_{\alpha}}dx}\Big)^{\frac{1}{2^*_{\alpha}-2}}.$$
Then
$\widetilde{t}\,w_{n}^{(2)}\in \mathcal{O}_{\mu}$
and
$$\sup\limits_{t\geq 0}J_{\mu}(tw_{n}^{(2)})=J_{\mu}(\widetilde{t}\,w_{n}^{(2)})=
\frac{\alpha}{2N}\Big(\frac{\| w_{n}^{(2)}\|^{2}_{H^{\frac{\alpha}{2}}_{0,L}(\mathcal{C}_{\Omega})}}
{\mu\int\limits_{\Omega\times\{0\}} |w_{n}^{(2)}|^{2^*_{\alpha}}dx}\Big)^{\frac{N-\alpha}{2}}.$$

On the other hand, by H\"{o}lder inequality and Young inequality, for $\mu\in(0\,,\,1)$,
we have
\begin{equation*}\label{eq100}
\aligned
\int\limits_{\Omega\times\{0\}}f(x)|\widetilde{t}&w_{n}^{(2)}|^{q}dx
\leq |f|_{L^{\infty}}\Big(\int\limits_{\Omega\times\{0\}}
|\widetilde{t}w_{n}^{(2)}|^{2^*_{\alpha}}dx\Big)^{\frac{q}{2^*_{\alpha}}}\\
&\leq |f|_{L^{\infty}}\Big(\kappa_{\alpha}\,S(\alpha,N)\Big)^{-\frac{q}{2}}
\widetilde{t}^{q}\,\|w_{n}^{(2)}\|^{q}_{H^{\frac{\alpha}{2}}_{0,L}(\mathcal{C}_{\Omega})}\\
&\leq \frac{2-q}{2}\Big(|f|_{L^{\infty}}(\kappa_{\alpha}\,S(\alpha,N)\,\mu)^{-\frac{q}{2}}\Big)^{\frac{2}{2-q}}
+\mu\frac{q}{2}\Big(\widetilde{t}^{q}\,\|w_{n}^{(2)}\|^{q}_{H^{\frac{\alpha}{2}}_{0,L}(\mathcal{C}_{\Omega})}
\Big)^{\frac{2}{q}}\\
&=\frac{2-q}{2}\mu^{\frac{-q}{2-q}}
\Big(|f|_{L^{\infty}}(\kappa_{\alpha}\,S(\alpha,N))^{-\frac{q}{2}}\Big)^{\frac{2}{2-q}}
+\frac{\mu q}{2}\|\widetilde{t}w_{n}^{(2)}\|^{2}_{H^{\frac{\alpha}{2}}_{0,L}(\mathcal{C}_{\Omega})}.\\
\endaligned\end{equation*}
Then we get
\begin{equation}\label{eq101}
\aligned
J(\widetilde{t}\,w_{n}^{(2)})
&=\frac{1}{2}\|\widetilde{t}\,w_{n}^{(2)}\|^{2}_{H^{\frac{\alpha}{2}}_{0,L}(\mathcal{C}_{\Omega})}-
\frac{\lambda}{q}\int\limits_{\Omega\times\{0\}}f(x)|\widetilde{t}\,w_{n}^{(2)}|^{q}dx-
\frac{1}{2^*_{\alpha}}\int\limits_{\Omega\times\{0\}}|\widetilde{t}\,w_{n}^{(2)}|^{2^*_{\alpha}}dx\\
&\geq \frac{1-\lambda\mu}{2}\|\widetilde{t}\,w_{n}^{(2)}\|^{2}_{H^{\frac{\alpha}{2}}_{0,L}(\mathcal{C}_{\Omega})}
-\frac{\lambda(2-q)}{2q}\mu^{\frac{-q}{2-q}}
\Big(|f|_{L^{\infty}}(\kappa_{\alpha}\,S(\alpha,N))^{-\frac{q}{2}}\Big)^{\frac{2}{2-q}}\\
&-\frac{1}{2^*_{\alpha}}\int\limits_{\Omega\times\{0\}}|\widetilde{t}\,w_{n}^{(2)}|^{2^*_{\alpha}}dx\\
&=(1-\lambda\mu)
\Big(\frac{1}{2}\|\widetilde{t}\,w_{n}^{(2)}\|^{2}_{H^{\frac{\alpha}{2}}_{0,L}(\mathcal{C}_{\Omega})}
-\frac{\frac{1}{1-\lambda\mu}}{2^*_{\alpha}}\int\limits_{\Omega\times\{0\}}|\widetilde{t}\,w_{n}^{(2)}|^{2^*_{\alpha}}dx\Big)\\
&-\frac{\lambda(2-q)}{2q}\mu^{\frac{-q}{2-q}}
\Big(|f|_{L^{\infty}}(\kappa_{\alpha}\,S(\alpha,N))^{-\frac{q}{2}}\Big)^{\frac{2}{2-q}}\\
&=(1-\lambda\mu)J_{\frac{1}{1-\lambda\mu}}(\widetilde{t}\,w_{n}^{(2)})
-\frac{\lambda(2-q)}{2q}\mu^{\frac{-q}{2-q}}
\Big(|f|_{L^{\infty}}(\kappa_{\alpha}\,S(\alpha,N))^{-\frac{q}{2}}\Big)^{\frac{2}{2-q}}\\
&=(1-\lambda\mu)^{\frac{N-\alpha+2}{2}}J_{1}(\widetilde{t}\,w_{n}^{(2)})-
\frac{\lambda(2-q)}{2q}\mu^{\frac{-q}{2-q}}
\Big(|f|_{L^{\infty}}(\kappa_{\alpha}\,S(\alpha,N))^{-\frac{q}{2}}\Big)^{\frac{2}{2-q}}.\\
\endaligned\end{equation}
Therefore, corresponding to $\lambda=\lambda_{n}$,
 from \eqref{eq101}, Remark \ref{re3} and the fact
$$J(w_{n}^{(2)})<m_{J}+\frac{\alpha}{2N}\Big(\kappa_{\alpha}S(\alpha,N)\Big)^{\frac{N}{\alpha}},$$ we obtain

\begin{equation*}\label{eq102}
\aligned
&J_{1}(\widetilde{t}\,w_{n}^{(2)})\\
&\leq \Big(\frac{1}{1-\lambda_{n}\mu}\Big)^{\frac{N-\alpha+2}{2}}
\Big[J(\widetilde{t}\,w_{n}^{(2)})+\frac{\lambda_{n}(2-q)}{2q}\mu^{\frac{-q}{2-q}}
\Big(|f|_{L^{\infty}}(\kappa_{\alpha}\,S(\alpha,N))^{-\frac{q}{2}}\Big)^{\frac{2}{2-q}}\Big]\\
&<\Big(\frac{1}{1-\lambda_{n}\mu}\Big)^{\frac{N-\alpha+2}{2}}
\Big[m_{J}+\frac{\alpha}{2N}(\kappa_{\alpha}S(\alpha,N))^{\frac{N}{\alpha}}\\
&+\frac{\lambda_{n}(2-q)}{2q}\mu^{\frac{-q}{2-q}}
\Big(|f|_{L^{\infty}}(\kappa_{\alpha}\,S(\alpha,N))^{-\frac{q}{2}}\Big)^{\frac{2}{2-q}}\Big].
\endaligned
\end{equation*}
Since $m_{J}\to 0$, $\widetilde{t}\to 1$ as $n\to \infty$, it is easy to see that
$$\limsup\limits_{n\to \infty}J_{1}(w^{(2)}_{n})\leq \frac{\alpha}{2N}(\kappa_{\alpha}S(\alpha,N))^{\frac{N}{\alpha}}.$$
This tell us
$$\lim\limits_{n\to \infty}J_{1}(w^{(2)}_{n})= \frac{\alpha}{2N}(\kappa_{\alpha}S(\alpha,N))^{\frac{N}{\alpha}}.$$
We can conclude that $\{w_{n}^{(2)}\}$ is a minimizing sequence for $J_{1}$ in $\mathcal{O}_{1}$.
Then
$$
\kappa_{\alpha}\int\limits_{\mathcal{C}_{\Omega}}
 y^{1-\alpha}|\nabla w^{(2)}_{n}|^{2}dxdy-\int\limits_{\Omega\times\{0\}} |w^{(2)}_{n}|^{2^*_{\alpha}}dx\to 0
$$
and
$$J_{1}(w^{(2)}_{n})\to \frac{\alpha}{2N}(\kappa_{\alpha}S(\alpha,N))^{\frac{N}{\alpha}}$$
as $n\to \infty$. This implies that $\{w^{(2)}_{n}\}$ is a $(PS)_{c}$-sequence for $J_{1}$
at level $c=\frac{\alpha}{2N}(\kappa_{\alpha}S(\alpha,N))^{\frac{N}{\alpha}}$.
Clearly, $\{w^{(2)}_{n}\}$ is bounded, and
then there exists a subsequence $\{w^{(2)}_{n}\}$
and $w_{0}\in H^{\frac{\alpha}{2}}_{0,L}(\mathcal{C}_{\Omega})$ such that
$$w^{(2)}_{n}\rightharpoonup w_{0} \quad \text{weakly in}\,\,
 H^{\frac{\alpha}{2}}_{0,L}(\mathcal{C}_{\Omega}). $$
Since $\Omega$ is bounded, we have $w_{0}=0$. Moreover, by the concentration-compactness
principle (see Theorem 6 of \cite{ga2013}),
there exist two sequence $\{x_{n}\}\subset \Omega$, $\{R_{n}\}\subset \mathbb{R}^{+}$
 such that $R_{n}\to \infty$ as $n\to\infty$ and
 $$\|\text{tr}_{\Omega}w_{n}^{(2)}-R_{n}^{\frac{N-\alpha}{2}}u(R_{n}(x-x_{n}))\|_{H^{\frac{\alpha}{2}}_{0}(\Omega)}\to 0\quad\text{as}\,\,n\to \infty.$$
 This completes the proof of Theorem \ref{th02}.

\end{document}